\let\oldtocsection=\tocsection
\let\oldtocsubsection=\tocsubsection
\renewcommand{\tocsection}[2]{\hspace{0em}\oldtocsection{#1}{#2}}
\renewcommand{\tocsubsection}[2]{\hspace{2em}\oldtocsubsection{#1}{#2}}
\let\oldAA\AA
\renewcommand{\AA}{\text{\normalfont\oldAA}}
\def\Cal{\mathcal}
\def\A{{\Cal A}}
\def\C{{\Cal C}}
\def\R{{\Cal R}}
\def\P{{\Cal P}}
\def\S{{\Cal S}}
\def\F{{\Cal F}}
\def\T{{\Cal T}}
\def\bbr{{\Bbb R}}
\def\bbn{{\Bbb N}}
\def\bbc{{\Bbb C}}
\def\bbz{{\Bbb Z}}
\def\bbd{{\Bbb D}}
\def\bbs{{\Bbb S}}
\def\sgn{{\hbox{\rm sgn}}}
\def\const{{\hbox{\rm const}}}
\def\rn{\bbr^n}
\def\sn{S^{n-1}}
\def\part{\partial}
\def\intl{\int\limits}
\def\b{\beta}
\def\Lam{\Lambda}
\def\Gam{\Gamma}
\def\a{\alpha}
\def\om{\omega}
\def\Del{\Delta}
\def\vp{\varphi}
\def\gam{\gamma}
\def\Gam{\Gamma}
\def\Lam{\Lambda}
\def\sig{\sigma}
\def\lam{\lambda}
\def\z{\zeta}
\def\e{\varepsilon}
\def\t{\tau}
\def\snm1{\bbs^{n-1}}
\def\intl{\int\limits}
\def\ch{\mathrm{cosh}}
\def\Cs{\mathscr{C}}
\def\F{{\mathcal F}}
\def\Cs{\mathscr{C c 1234}}
\def\cd{\stackrel{*}{\C}\!{}_{m, k}^\lam}
\def\sd{\stackrel{*}{\S}\!{}_{m, k}^\lam}
\def\cd0{\stackrel{*}{\C}\!{}_{m, k}^\lam}
\def\sd0{\stackrel{*}{\S}\!{}_{m, k}^\lam}
\def\ncd0{\stackrel{*}{\Cs}\!{}_{m, k}^\lam}
\newtheorem{theorem}{Theorem}[section]
\newtheorem{lemma}[theorem]{Lemma}
\theoremstyle{definition}
\newtheorem{definition}[theorem]{Definition}
\newtheorem{example}[theorem]{Example}
\theoremstyle{remark}
\newtheorem{remark}[theorem]{Remark}
\numberwithin{equation}{section}
\theoremstyle{corollary}
\newtheorem{corollary}[theorem]{Corollary}
\newtheorem{proposition}[theorem]{Proposition}
\numberwithin{equation}{section}
\newcommand{\be}{\begin{equation}}
\newcommand{\ee}{\end{equation}}
\newcommand{\bea}{\begin{eqnarray}}
\newcommand{\eea}{\end{eqnarray}}
\newcommand{\Bea}{\begin{eqnarray*}}
\newcommand{\Eea}{\end{eqnarray*}}
\def\sideremark#1{\ifvmode\leavevmode\fi\vadjust{\vbox to0pt{\vss
 \hbox to 0pt{\hskip\hsize\hskip1em
\vbox{\hsize2cm\tiny\raggedright\pretolerance10000
 \noindent #1\hfill}\hss}\vbox to8pt{\vfil}\vss}}}%
\begin{document}



\title[ Fractional Integrals]
{Fractional Integrals Associated with  Radon Transforms }


\author{ B. Rubin}

\address{Department of Mathematics, Louisiana State University, Baton Rouge,
Louisiana 70803, USA}
\email{borisr@lsu.edu}

\subjclass[2010]{Primary 42B20; Secondary  44A12}



\keywords{Fractional integrals,  Radon transforms, norm estimates.}

\begin{abstract}
We obtain sharp $L^p$-$L^q$ estimates for  fractional integrals generated by Radon transforms of the following three types:
the classical Radon transform over the set of all hyperplanes in $\rn$,
 the Strichartz transversal  transform over only those hyperplanes, which meet the last coordinate axis, and the Radon transform  associated with  paraboloids.
 The method relies on a version
of Stein’s interpolation theorem for analytic families of
operators communicated by L. Grafakos.
  \end{abstract}

\maketitle


\vspace*{-\baselineskip}

\tableofcontents

\section{Introduction}

The present article grew up from our study of  the Radon-type transforms
\bea
 \label {RRT}  (Rf)(\theta,t)&=&\intl_{\theta^\perp}f(y+t\theta)\,d_\theta y,  \\
\label {bart}
(Tf)(x)&=&\intl_{\bbr^{n-1}} f(y', x_n + x'\cdot y')\, dy',\\
\label {ppar}
(P f)(x)&=&\intl_{\bbr^{n-1}} f(x'-y', x_n -|y'|^2)\, dy',
\eea
arising in  Analysis and applications.  Here
\[
 (\theta,t) \!\in  S^{n-1} \times \bbr,\quad x\!=\!(x_1, \ldots, x_{n-1}, x_n)\!=\!(x', x_n) \!\in \!\rn \; \text {\rm (similarly for $y$)},\]
  $S^{n-1}$ is the unit sphere in $\rn$, $\theta^\perp$  is the hyperplane
orthogonal to $\theta$ and passing through the origin,
$d_\theta y$ denotes the Euclidean measure on $\theta^{\perp}$,  $f$ is a sufficiently good function on $\rn$.
 We also write
\be\label {bwdrt}
(R_{\theta}f)(t)=(Rf)(\theta,t),\qquad (T_{x'}f)(x_n)=(T f)(x', x_n), \ee
\be\label {bwdrt1} (P_{x'}f)(x_n)=(P f)(x', x_n).\ee
 The first
 operator is the well-known hyperplane Radon transform; see, e.g., \cite{H11, Ru15}, and references therein.   The second operator integrates functions on $\rn$ over only those hyperplanes which meet the last coordinate axis. Following Strichartz \cite {Str}, we call it   {\it  the   transversal Radon transform}; see also \cite{Chr, Ru12}, \cite [Section 4.13] {Ru15}.
  The third
 operator was  considered by Christ \cite{Chr} and the author \cite{Ru22}. It  performs integration over shifted paraboloids.  We call it  {\it the parabolic Radon transform}. Different parametrizations of Radon transforms were discussed by Ehrenpreis \cite{E}.
Sharp $L^p$-$L^q$ estimates for $R$  were obtained by Oberlin and Stein \cite {OS}. They yield similar estimates for $T$ and $P$ because  these three operators are intimately connected \cite{Chr, Ru12, Ru15, Ru22}.

The purpose  of the present paper  is two-fold. We plan to examine  known $L^p$-$L^q$ boundedness results for the operators $R$, $T$, and $P$, which were previously obtained with the aid of Stein's interpolation theorem for analytic families of operators \cite {Ste56}. Our second aim is  to study the corresponding analytic families themselves, because they  are of interest on their own right.

 In fact, there exist many such families indexed by a complex parameter $\a$ and associated with the afore-mentioned Radon transforms.  They can be defined using the tools of  single-variable Fractional Calculus and represented by convolutions of the form

\be\label {desds1}
R_{\pm}^\a f \!=\! h^\a_{\pm} \ast R_{\theta}f, \quad   R_{0}^\a f \!=\! h^\a_{0} \ast R_{\theta}f,\qquad  R_{s}^\a f \!=\! h^\a_{s} \ast R_{\theta}f;\quad\ee

\be\label {desds2}
T_{\pm}^\a f \!=\! h^\a_{\pm} \ast T_{x'}f, \quad   T_{0}^\a f \!=\! h^\a_{0} \ast T_{x'}f,\qquad  T_{s}^\a f \!=\! h^\a_{s} \ast T_{x'}f;\ee

\be\label {desds3}
P_{\pm}^\a f \!=\! h^\a_{\pm} \ast P_{x'}f, \quad   P_{0}^\a f \!=\! h^\a_{0} \ast P_{x'}f,\quad  P_{s}^\a f \!=\! h^\a_{s} \ast P_{x'}f.\!\ee
${}$

\noindent Here $h_{\pm}^\a$, $h_{0}^\a$, and $h_{s}^\a$
are single-variable tempered distributions
acting on $R$, $T$, and $P$ in the last variable and defined by  the formulas
\be\label {t2.3}
h^\a_{\pm} (\tau)  =  {\tau^{\a-1}_{\pm}\over \Gamma(\a)}=
\frac{1}{\Gamma(\a)} \left\{
\begin{array}{ll}  |\tau|^{\a-1} &\mbox{\rm if} \; \; \pm \tau >0,\\
0 &\mbox{\rm otherwise;}\\
  \end{array}
\right. \ee
\bea
\label {t2.4} h_0^\a(\tau) &=& {|\tau|^{\a-1}\over\gamma (\a)},\qquad  \a\neq 1, 3, 5, \ldots;\\
\label {t2.4s} h_s^\a(\tau) &=& {|\tau|^{\a-1}\sgn (\t)\over \gamma'(\a)},\qquad \a \neq 2,4,6, \dots;\eea
\[ \gamma (\a)= 2\Gamma(\a) \,\cos (\a\pi/2), \qquad \gamma'(\a)=2i\Gam (\a) \sin (\a \pi/2);\]
see, e.g.,  \cite [Chapter I, Section 3]{GS1},  \cite {Es, Ru13, SKM}.

 All these operators can be treated using the same ideas as
$R_{+}^\a$, $T_{+}^\a$, and $P_{+}^\a$. For the sake of simplicity, we will be focusing only on these three operators and leave  others to the interested reader.
 The corresponding one-dimensional convolution
\be\label {t2.8}
(I^\a_+ \omega)(t) = (h^\a_+ *\omega)(t)= {1\over \Gamma(\a)}\intl^{\infty}_0 \tau^{\a-1} \om (t-\t)\,d\t, \ee
is  the well-known  Riemann-Liouville  fractional integral \cite{SKM}. Fractional integrals $R_{0}^\a f$ were  introduced by V.I. Semyanistyi \cite{Sem}. The operators $R_{0}^\a $, $R_{\pm}^\a$,  and  $R_{s}^\a$ were
 also considered in \cite{Ru13} and  \cite[Section 4.9.3]{Ru15} in the framework of the Semyanistyi-Lizorkin space of Schwartz functions orthogonal to all polynomials (see Definition \ref{Semyanistyi}).

\vskip 0.2 truecm

\noindent {\bf Main results.}  One of the  main results  can be stated as follows.

\begin {theorem} \label {lanseTM} Let $1\le p,q \le \infty$, $\a_0=Re\, \a$.  The operators  $T_{+}^\a$ and $P_{+}^\a$, initially defined on Schwartz functions $f \in S (\rn)$ by analytic continuation, extend as linear bounded operators  from $L^p (\rn)\!$ to $L^q (\rn)\!$ if and only if
\be\label {wz2as}
\frac{1-n}{2} \le \a_0 \le 1, \qquad p=\frac{n+1}{n + \a_0}, \qquad q=\frac{n+1}{1-\a_0}.\ee
\end{theorem}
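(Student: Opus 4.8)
The plan is to prove the two implications separately. For the ``if'' part I will exhibit $\{T_+^\a\}$ and $\{P_+^\a\}$ as analytic families of operators, establish two endpoint estimates at the edges of the strip $\tfrac{1-n}{2}\le\a_0\le1$, and fill in the interior by the interpolation theorem quoted in the introduction. For the ``only if'' part I will use dilations together with translation invariance. It is convenient to treat $P_+^\a$ and $T_+^\a$ in parallel, noting that $Pf=f\ast\mu$ is a genuine convolution on $\rn$ with the measure $\mu$ carried by the paraboloid $\{(y',|y'|^2):y'\in\bbr^{n-1}\}$, whereas $Tf$ is a convolution in the last variable only.

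For the sufficiency I first treat the line $\a_0=1$. Writing $\a=1+i\b$, the kernel obeys $|h_+^\a(\tau)|=|\Gam(\a)|^{-1}\mathbf 1_{\tau>0}$, so Fubini gives $|P_+^\a f(x)|\le|\Gam(\a)|^{-1}\int_0^\infty|Pf(x',x_n-\tau)|\,d\tau\le|\Gam(\a)|^{-1}\|f\|_1$, and identically for $T_+^\a$; this is the endpoint $(p,q)=(1,\infty)$ with norm $O(|\Gam(1+i\b)|^{-1})=O(e^{\pi|\b|/2})$. On the line $\a_0=\tfrac{1-n}{2}$ I use Plancherel. For $P$ a Fresnel computation gives $\hat\mu(\xi)=c\,|\xi_n|^{-(n-1)/2}e^{i|\xi'|^2/(4\xi_n)}$ times a unimodular factor, so the multiplier $\widehat{h_+^\a}(\xi_n)\hat\mu(\xi)$ has modulus $c\,|\xi_n|^{-\a_0-(n-1)/2}e^{\pm\pi\,\mathrm{Im}\,\a/2}$, which is constant in $\xi$ exactly when $\a_0=\tfrac{1-n}{2}$; hence $\|P_+^\a\|_{L^2\to L^2}=O(e^{\pi|\mathrm{Im}\,\a|/2})$. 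For $T$ the same holds after the partial Fourier transform in $x_n$: one finds $\widehat{T_+^\a f}^{(n)}(x',\xi_n)=c\,(i\xi_n)^{-\a}\hat f(-x'\xi_n,\xi_n)$, and the substitution $\xi'=-x'\xi_n$ (Jacobian $|\xi_n|^{-(n-1)}$) turns $\|T_+^\a f\|_2^2$ into $c\int|\xi_n|^{-2\a_0-(n-1)}|\hat f(\xi)|^2\,d\xi$, again bounded precisely at $\a_0=\tfrac{1-n}{2}$. Since both families are analytic in $\a$ and of admissible growth $e^{\pi|\mathrm{Im}\,\a|/2}$ on the strip, interpolating the point $(\tfrac1p,\tfrac1q)=(\tfrac12,\tfrac12)$ at $\a_0=\tfrac{1-n}{2}$ with $(\tfrac1p,\tfrac1q)=(1,0)$ at $\a_0=1$ yields, at each interior $\a_0$, exactly $\tfrac1p=\tfrac{n+\a_0}{n+1}$, $\tfrac1q=\tfrac{1-\a_0}{n+1}$, which is \eqref{wz2as}.

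For the necessity I match powers of scaling parameters in $\|\,\cdot\,\|_{L^q}\le C\|\,\cdot\,\|_{L^p}$, the constant being forced to be scale free. The transversal operator admits two commuting dilations: the isotropic one $f\mapsto f(\rho\,\cdot)$, under which $T_+^\a(f(\rho\,\cdot))=\rho^{-(n-1)-\a}(T_+^\a f)(x',\rho x_n)$ and which yields $\tfrac{n}{p}-\tfrac1q=(n-1)+\a_0$, and the partial dilation $f\mapsto f(\mu\,\cdot',\cdot_n)$, which the bilinear phase $x'\!\cdot y'$ makes admissible and which yields $\tfrac1p+\tfrac1q=1$; these two linear relations have the unique solution \eqref{wz2as}. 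The parabolic operator is invariant only under the single parabolic dilation $f\mapsto f(\rho\,\cdot',\rho^2\cdot_n)$, giving the homogeneity relation $(n+1)(\tfrac1p-\tfrac1q)=(n-1)+2\a_0$; the complementary relation $\tfrac1p+\tfrac1q=1$ I would obtain either from a Knapp focusing example adapted to the paraboloid or by transporting the sharp pair from $T_+^\a$ through the explicit change of variables connecting the two transforms. Finally the range of $\a_0$ is forced: $p\ge1$ gives $\a_0\le1$, while $\a_0\ge\tfrac{1-n}{2}$ is equivalent to $q\ge p$ on the solution curve, and $q\ge p$ is necessary because a nonzero translation-invariant operator on $\rn$ cannot map $L^p$ to $L^q$ with $q<p$.

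The step I expect to be the main obstacle is the $L^2$ endpoint for $T_+^\a$. Because $T$ is not a convolution in all variables, one must work through the mixed representation $\widehat{T_+^\a f}^{(n)}(x',\xi_n)=c\,(i\xi_n)^{-\a}\hat f(-x'\xi_n,\xi_n)$ and justify the change of variables $\xi'=-x'\xi_n$, which degenerates as $\xi_n\to0$; at the same time one must track the unimodular phases carefully so that the operator norm on every vertical line stays $O(e^{\pi|\mathrm{Im}\,\a|/2})$, this being precisely what makes the family admissible for Stein's theorem. A secondary delicate point is the necessity of the lower endpoint $\a_0=\tfrac{1-n}{2}$ for $P_+^\a$, where the absence of a second dilation forces the focusing-example (or transference) argument described above.
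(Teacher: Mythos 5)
Your overall architecture is the same as the paper's: sufficiency from the two endpoint estimates ($L^2\to L^2$ at $\a_0=(1-n)/2$, $L^1\to L^\infty$ at $\a_0=1$) followed by analytic interpolation, and necessity from dilations. Your $L^2$ computation for $T_+^\a$ via the mixed Fourier representation and the substitution $\xi'=-x'\xi_n$ is exactly the paper's Lemma \ref{ing}, your $L^1\to L^\infty$ bound is Lemma \ref{ingq}, and your two one-parameter dilations (isotropic, and partial in $x'$) carry precisely the same information as the paper's bi-parametric scaling $A_\lam f=f(\lam_1 x',\lam_2 x_n)$. Two of your sub-arguments genuinely differ. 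First, for $P_+^\a$ you work directly (Fresnel computation of $\hat\mu$), whereas the paper uses the identity $P_{+}^{\,\a}=B_2T_{+}^\a B_1$ of Lemma \ref{swaw}, with the measure-preserving shears (\ref{bars}), so that \emph{both} halves of the theorem for $P_+^\a$ are immediate corollaries of the $T_+^\a$ result. Your route is viable for the $L^2$ endpoint, but it forces a second run of the interpolation machinery, and for the necessity your ``Knapp focusing example'' alternative is weaker than you expect: the cap example adapted to the paraboloid only reproduces the parabolic scaling relation $(n+1)(1/p-1/q)=n-1+2\a_0$, and slab examples only give $q\ge p$; since that scaling relation is a single linear equation, these do not pin down the point $(1/p,1/q)$ on the scaling line, and in effect you must fall back on the shear transference — i.e., on the paper's identity — anyway. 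Second, your derivation of $\a_0\ge(1-n)/2$ from H\"ormander's translation-invariance obstruction ($q\ge p$ is necessary for a nonzero translation-commuting operator) is a clean alternative to the paper's argument, which instead interpolates a hypothetical bounded triple at $\a_0<(1-n)/2$ against the $(1,\infty)$ endpoint at $\a=1$ and confronts the outcome with the $L^2$ theory; for your version note that $T_+^\a$ commutes only with translations in the $x_n$-variable ($x'$-translations conjugate it into shears), but this one-parameter invariance suffices for H\"ormander's iteration.

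The genuine gap is the sentence in which you pass from the endpoint bounds to the conclusion: ``since both families are analytic in $\a$ and of admissible growth \dots, interpolating \dots yields (\ref{wz2as}).'' The theorem you are invoking (Theorem \ref{lanse1}) has hypotheses {\bf (A)}--{\bf (C)}: each $T_z$ must map $C_c^\infty(\rn)$ into $L^1_{loc}(\rn)$; $z\mapsto\int(T_z\vp)\psi$ must be analytic in the open strip and continuous on its closure; and $\log\|T_z\vp\|_{L^s(K)}$ must obey a locally uniform bound $C\,e^{\gam|\Im z|}$ with $\gam<\pi$. For $\Re\a<0$ none of this is automatic, because $T_+^\a\vp$ and $P_+^\a\vp$ are then defined only by analytic continuation, not by convergent integrals. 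Verifying these conditions is the paper's principal technical content — Lemma \ref{from}, Lemma \ref{sideT}, the pointwise bound (\ref{fint}) obtained from $T_+^{\a}\vp=T_+^{\a+\ell}\partial_n^\ell\vp$, and the Gamma-function asymptotics (\ref{quad}) — and is its declared raison d'\^etre: the classical formulation of Stein's theorem is phrased in terms of simple functions, on which the analytically continued operators are not even defined, which is exactly why the Grafakos version with $C_c^\infty$ test functions is needed. Your proposal asserts admissibility of the families where it needs to be proved, and this verification (not the $L^2$ endpoint you single out) is the real crux. Apart from this, and the hedge on the $P$-necessity noted above, the proposal is correct.
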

This statement is a combination of Theorems \ref {lanseT} and \ref{lanseTp}.
The case $\a=0$ gives known results for the  Radon transforms  $T$ and $P$  \cite{Chr, Ru22}.

We also obtain a similar ``if and only if'' statement  for the operator $R_{+}^\a$ in the cases $\a=0$  and $0<Re\, \a <1$;  see Theorems \ref{OST2} and \ref{OST2a}.
Theorem \ref{OST2} represents an alternative version of the Oberlin-Stein boundedness result for the Radon transform $R$.
Theorem \ref{OST2a}, which includes the weak-type estimate  for $p=1$,
 is an analogue of  the Hardy-Littlewood-Sobolev theorem for Riesz potentials; cf. \cite [Chapter V, Section 1.2]{Ste}, \cite [Theorem 0.3.2] {Sog}, \cite [p. 189] {Sog1}).

For  $(1-n)/2 \le Re\, \a \le 0$, $\a \neq 0$, we show that the ``if'' part of Theorem \ref {lanseTM} still holds for $R_{+}^\a$. The validity of the  ``only if'' part for these values of $\a$ is an open problem. We conjecture that  this problem has an affirmative answer, but to prove it, one needs a suitable modification  of Stein's interpolation theorem (see comments below).

 Theorem \ref {lanseTM} for  $T_{+}^\a$ and $P_{+}^\a$, and the corresponding statement for $R_{+}^\a$  guarantee the existence of the $L^p$-$L^q$ bounded extensions of the operators provided by  interpolation. It is natural to ask:

 {\it What are the  explicit analytic formulas for these extensions?}

 Of course, the corresponding expressions can be described in terms of distributions.  However, they belong to $L^q$, and we wonder how these $L^q$-functions look like pointwise. A similar question for solutions of the wave equation was studied in \cite{Ru89}. The case $Re\, \a <0$ is especially intriguing because our operators are not represented by absolutely convergent integrals and need a suitable $L^q$-regularization.
  We shall prove (see Theorem \ref{iffe})
  that the latter can be performed by making use of the  hypersingular integrals, generalizing the concept of Marchaud's fractional derivative \cite {Marc, Ru89a, SKM}.

The plan of the paper is reflected in the Contents.

\vskip 0.2 truecm

\noindent {\bf Comments and challenges.}

{\bf 1.} One of the  motivations for writing this article was the following observation.
A remarkable $L^p$-$L^q$ boundedness result for the Radon transform (\ref{RRT}) was obtained by Oberlin and Stein \cite{OS} by making use of  the operator family  $\{R_{0}^\a\}$  (up to a constant multiple) and
Stein's interpolation theorem  \cite [Theorem 1]{Ste56}.  Although formal application of this theorem yields the correct result, justification of the applicability of the theorem does not seem to be trivial. The crux of the matter is that the assumptions of Stein's theorem and its proof are given in terms of simple functions (finite  linear combinations of the characteristic functions of disjoint compact sets), and it is not so clear how to check the validity of these assumptions for the operator family  $\{R_{0}^\a\}$, which is defined in terms   functions in $S(\rn)$, rather than simple ones.

 Since the  details related to implementation of simple functions were omitted in \cite{OS} and the results of \cite{OS} have already been used in other publications, it was natural to try to find an alternative approach and present it  in full detail.
 To circumvent this obstacle, we invoke fractional integrals associated with the transversal transform $T$. This way
   became possible thanks to the  recent modification of Stein's interpolation theorem due to Grafakos \cite{Graf},  who replaced simple functions by  arbitrary smooth compactly supported functions. That was done  in the case when operators  under consideration take functions on $\rn$ to functions on $\rn$.

   To treat the problem directly and thus cover the ``only if'' part for all $(1-n)/2 \le Re\, \a \le 0$, we need an interpolation theorem like that in \cite{Graf}, but for the case when the source space and the target space are not necessarily the same. In our case, the
   target space must be $S^{n-1} \times \bbr$. To the best of my knowledge, such a more general interpolation theorem is not available in the literature and is highly desirable for the study of diverse analytic families arising in Analysis and its applications.

 Regarding the general context of metric measure spaces, including the case when the source space and the target space may be different, one should  mention a recent paper  by Grafakos and Ouhabaz \cite{GO}. In this paper,  simple functions are substituted  by  continuous  compactly supported functions, not necessarily smooth. The interpolation theorem  from \cite{GO}  is not directly applicable to the operator families (\ref{desds1})-(\ref{desds3}), because  the smoothness of $f$ is crucial for the their definition when $\a$ is negative.

Note also that  unlike \cite{OS},  to minimize technicalities  related to  fractional powers of the Laplacian, we prefer to work with the analytic family  $\{R_{+}^\a\}$, rather than $\{R_0^\a\}$.

\vskip 0.2 truecm

{\bf 2.} Some $L^p$-$L^q$ estimates for the localized modifications of $P_{\pm}^\a f $  and $Pf$ with a
 smooth  cut-off function under the sign of integration were announced   by
 Littman \cite {Litt} and Tao \cite{Tao}, who also referred to   Stein's interpolation theorem in  \cite {Ste56}.
 In contrast,  our operators are not localized and we use another interpolation technique, which is based on \cite{Graf}.

\vskip 0.2 truecm

\noindent {\bf Acknowledgements.} The author is grateful to Professor Loukas Grafakos for useful discussions and sharing his knowledge of the subject. Special thanks go to Professor Daniel M. Oberlin for  correspondence.

\section{Preliminaries}

\subsection{Notation} ${}$\hfill

In the following, $x\!=\!(x_1, \ldots, x_{n-1}, x_n)\!=\!(x', x_n) \!\in \!\rn$; $S^{n-1}$ is the unit sphere in $\rn$ with the surface area measure $d\theta$;
$\sigma_{n-1} \equiv \int_{\sn} d\theta= 2\pi^{n/2} \big/ \Gamma (n/2)$ is the area of $S^{n-1}$.
The notation $C(\bbr^n)$,  $C^\infty (\bbr^n)$,
and $L^p (\bbr^n)$ for function spaces is standard;   $||\cdot ||_p =||\cdot ||_{L^p (\rn)}$; $C_0 (\bbr^n)=\{f\in C(\bbr^n):
\lim\limits_{|x|\to\infty} f(x) = 0\}$; $C_c^\infty (\bbr^n)$ is the space of compactly supported infinitely differentiable functions on $\rn$.
 A similar notation will be used for functions on the cylinder $Z_n=S^{n-1}\times \bbr$. The corresponding $L^p$-norms will be denoted by  $||\cdot ||^{\sim}_p$.

 The notation $\langle f,g\rangle$ for  functions $f$ and $g$ is used for the integral of the product of these functions.
 We keep the same notation  when $f$ is a distribution and $g$ is a test function.

 If $m=(m_1, \ldots,  m_n)$ is a multi-index, then $\partial^m = \partial_1^{m_1} \ldots     \partial_n^{m_n}$,  where  $\partial_i  =\partial/ \partial x_i$;  $\Delta = \partial^2/\partial x_1^2 +\ldots +\partial^2/\partial x_n^2$ is
 the Laplace operator.
The Fourier transform of a function
$f \in  L^1 (\bbr^n)$ is defined by
\be \label{ft} \hat f (\xi) = \intl_{\bbr^{ n}} f(x) \,e^{ i x \cdot \xi} \,dx, \qquad \xi\in \rn,
\ee where $x \cdot \xi = x_1 \xi_1 + \ldots + x_n\xi_n$.
 We denote by $S(\bbr^n)$
the Schwartz space of $C^\infty$-functions
which are rapidly
decreasing together with their derivatives of all orders. The space $S(\bbr^n) $ is equipped  with the topology generated by the sequence
of norms
\be\label{setop} ||\vp||_k=\max\limits_x(1+|x|)^k
\sum_{|m|\le k} |(\part^m\vp)(x)|, \quad k=0,1,2, \ldots .\ee

The Fourier transform is an isomorphism of $S(\bbr^n)$.
The space of tempered distributions, which is dual to $S(\bbr^n)$, is denoted by $S'(\bbr^n)$.  The Fourier transform of a distribution $f\in S'(\bbr^n)$ is  a
distribution $\hat f\in S'(\bbr^n)$ defined by
\be\label{ftrd12}
\langle \hat f,\psi\rangle=\langle f,\hat \psi\rangle,\qquad \psi\in S(\bbr^n). \ee
The equality (\ref{ftrd12}) is equivalent to
\be\label{ftrd12V}
\langle\hat f,\hat\vp\rangle=(2\pi)^n \langle f,\vp_1 \rangle,\qquad \vp\in S(\bbr^n), \quad \vp_1 (x)=\vp (-x). \ee
The inverse Fourier transform of a function (or distribution) $f$ is denoted by $\check f$.

Given a  real-valued quantity
$X$ and a complex number $\lam$, we set $(X)_\pm^\lam= |X|^\lam$ if $\pm X>0$ and $(X)_{\pm}^\lam=0$, otherwise.
 All integrals are understood in the  Lebesgue sense.  The letter $c$, sometimes with subscripts, stands for a nonessential constant that may be different at each
occurrence.  We write $\bbn$ for the set of all positive integers; $\bbz_+$ denotes  the set of all nonnegative integers;
$\bbz_+^n$ is the set of all multi-indices $\gam =(\gam_1, \gam_2, \ldots, \gam_n)$,  $\gam_i \in \bbz_+$ $(i=1,2,\ldots, n)$. The notation $e_1, \ldots, e_n$ will be
used for coordinate unit vectors in $\rn$; $O(n)$ is the group of orthogonal transformation of $\rn$ equipped with the probability Haar measure.

\subsection{The Semyanistyi-Lizorkin spaces} \label {orkin}
To circumvent some obstacles arising in the study of convolutions whose Fourier transforms have singularities, it may be convenient to
 reduce the  space $S(\bbr^n)$ of the test functions in a suitable way and expand the corresponding  space of distributions.
An idea of this approach originated in the work of Semyanistyi \cite{Sem} who treated the Fourier multipliers having singularity at a single point $x=0$. This idea was independently used by Helgason \cite [p. 162]{H65}. It was
extended by Lizorkin \cite {Liz} and later by Samko \cite{Sam77} to more general sets of singularities; see also \cite{Ru15, Sam,  SKM, Yos} and references therein.  In the present paper,  the set of singularities is a  either the hyperplane $x_n=0$ or a single point $x=0$.

\begin{definition}\label {Semyanistyi}
{\it For $n\ge 2$, we denote by $\Psi(\bbr^n)$,
   the subspace of  functions $\psi \in S(\bbr^n)$
vanishing with all derivatives on the hyperplane $x_n=0$.
 Let $\Phi(\bbr^n)$  be the
  Fourier image of $ \Psi (\bbr^n)$. For $n\ge 1$, we denote by   $\Psi_0(\bbr^n)$ the subspace of functions in $S(\bbr^n)$
vanishing with all derivatives at the origin $x=0$. The
  Fourier image of $ \Psi_0 (\bbr^n)$ will be denoted by $\Phi_0(\bbr^n)$. }
  \end{definition}

 \begin{proposition}  \label{jikl1} {\rm (cf. \cite[Proposition  4.141]{Ru15}) }
 A  function $\vp \in S(\bbr^n) $ belongs to
$\Phi (\bbr^n)$ if and only if
  \be\label{kssi1} \intl_{\bbr} \vp(x', x_n)\, x_n^{k}
 \,dx_n=0 \quad \text {for all} \quad  k\in \bbz_+ \;\text {and all} \quad x' \in\bbr^{n-1}.\ee
 The space $\Phi_0(\bbr^n)$ consists of Schwartz functions orthogonal to all polynomials.
 \end{proposition}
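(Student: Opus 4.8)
The plan is to prove the two assertions separately, reducing the hyperplane statement to a one–dimensional Fourier identity by separating the tangential variables $x'$ from the normal variable $x_n$. Since the Fourier transform is an isomorphism of $S(\bbr^n)$, every $\vp\in S(\bbr^n)$ has a unique Schwartz preimage $\psi=\check\vp$ with $\hat\psi=\vp$, and by definition $\vp\in\Phi(\bbr^n)$ exactly when $\psi\in\Psi(\bbr^n)$, i.e. when $\psi$ together with all its derivatives vanishes on $\{x_n=0\}$. First I would record the elementary reduction that this flatness is equivalent to the vanishing of all \emph{normal} derivatives, $(\part_n^k\psi)(y',0)=0$ for every $k\in\bbz_+$ and every $y'\in\bbr^{n-1}$: differentiating in the tangential directions preserves vanishing on the hyperplane, so the normal derivatives already control every $\part^m\psi$ there.

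Next comes the main computation. Let $\F'$ denote the $(n-1)$–dimensional Fourier transform in the first variables, so that $\hat{}=\F'\F_n$. For fixed $x'$ put $g_{x'}(x_n)=\vp(x',x_n)$; factoring the transform gives $g_{x'}=\widehat{u_{x'}}$ (one–variable transform), where $u_{x'}(y_n)=(\F'\psi)(x',y_n)$. Applying one–variable Fourier inversion in $x_n$ and differentiating under the integral sign yields the moment–derivative identity
\[
\int_\bbr \vp(x',x_n)\, x_n^{\,k}\, dx_n = c_k\, u_{x'}^{(k)}(0),\qquad c_k\neq 0 .
\]
Since $u_{x'}^{(k)}(0)=\F'\big[(\part_n^k\psi)(\cdot,0)\big](x')$ and $\F'$ is injective on Schwartz functions of $y'$, the moment condition (\ref{kssi1}) holds for all $k$ and all $x'$ if and only if $(\part_n^k\psi)(y',0)=0$ for all $k$ and all $y'$, which by the first step is exactly $\psi\in\Psi(\bbr^n)$, i.e. $\vp\in\Phi(\bbr^n)$. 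This gives the first equivalence, both directions simultaneously via the chain of equivalences.

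For the second assertion I would invoke the standard relation between moments on the Fourier side and derivatives at the origin. With $\vp=\hat\psi$, differentiating the inversion formula under the integral gives, for every multi-index $m$,
\[
(\part^m\psi)(0)=\frac{(-i)^{|m|}}{(2\pi)^n}\int_{\bbr^n}\vp(x)\,x^m\,dx .
\]
Hence $\psi$ vanishes with all derivatives at the origin (that is, $\psi\in\Psi_0(\bbr^n)$, i.e. $\vp\in\Phi_0(\bbr^n)$) if and only if every monomial moment of $\vp$ vanishes, which is precisely orthogonality of $\vp$ to all polynomials.

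The routine points — absolute convergence, Fubini, and differentiation under the integral sign — are all justified by the Schwartz decay of $\vp$ and $\psi$, so I expect no serious analytic obstacle. The one place requiring care is the bookkeeping of which transform acts on which block of variables, together with the use of injectivity of the partial transform $\F'$ to pass from the vanishing of $\F'\big[(\part_n^k\psi)(\cdot,0)\big]$ to the pointwise vanishing of the normal derivatives; this is exactly where the reduction to a single variable does the real work.
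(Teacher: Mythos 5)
Your proof is correct. The paper itself gives no proof of this proposition, citing instead \cite[Proposition 4.141]{Ru15}, and your argument --- reducing the moment condition to the vanishing of the normal derivatives $(\part_n^k\check\vp)(\cdot,0)$ via the partial Fourier transform $\F'$, and handling $\Phi_0(\bbr^n)$ through the identity $(\part^m\check\vp)(0)=c_m\int_{\bbr^n}\vp(x)\,x^m\,dx$ --- is precisely the standard argument underlying that citation, so there is nothing to add.
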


\begin{proposition} \label{izsh} {\rm (cf.  \cite[Theorem 2.33]{Sam})}  The spaces $\Phi (\bbr^n)$ and $\Phi_0 (\bbr^n)$ are  dense in $L^p (\rn)$, $1<p<\infty$.
 \end{proposition}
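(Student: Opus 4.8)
The plan is to prove Proposition \ref{izsh}, the density of $\Phi(\bbr^n)$ and $\Phi_0(\bbr^n)$ in $L^p(\rn)$ for $1<p<\infty$, by a standard duality argument. Since $\Phi(\bbr^n)\subset S(\bbr^n)$ and $S(\bbr^n)$ is itself dense in $L^p(\rn)$, it suffices to show that the $L^p$-closure of $\Phi(\bbr^n)$ is all of $L^p(\rn)$. By the Hahn-Banach theorem and the duality $(L^p)^* = L^{p'}$ with $1/p + 1/p' = 1$, a subspace is dense in $L^p(\rn)$ if and only if the only $g\in L^{p'}(\rn)$ satisfying $\langle \vp, g\rangle = 0$ for all $\vp$ in the subspace is $g=0$. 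So the core task reduces to: \emph{if $g\in L^{p'}(\rn)$ annihilates every $\vp\in\Phi(\bbr^n)$, then $g=0$ almost everywhere.}

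To carry this out, I would first reinterpret the annihilation condition using the characterization from Proposition \ref{jikl1}. An element $\vp\in\Phi(\bbr^n)$ is precisely a Schwartz function whose moments in the last variable vanish, i.e. $\int_\bbr \vp(x',x_n)\,x_n^k\,dx_n = 0$ for every $k\in\bbz_+$ and every $x'$. Equivalently, passing to the Fourier side, $\Phi(\bbr^n)$ is the Fourier image of $\Psi(\bbr^n)$, the Schwartz functions vanishing to infinite order on the hyperplane $x_n=0$. The cleanest route is to work on the Fourier side: suppose $g\in L^{p'}$ annihilates $\Phi(\bbr^n)$; I would translate this into a statement that the (distributional) Fourier transform $\hat g$ is supported on the hyperplane $\xi_n = 0$. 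A tempered distribution supported on a hyperplane is a sum of derivatives of the surface measure of that hyperplane transverse to it, i.e. $\hat g = \sum_{j=0}^{N} c_j(\xi')\otimes \delta^{(j)}(\xi_n)$ in an appropriate sense. Taking the inverse Fourier transform shows that $g$ must be a polynomial in $x_n$ with coefficients depending on $x'$ — more precisely, $g(x)=\sum_{j=0}^N x_n^j\,a_j(x')$ with each factor forcing polynomial growth in $x_n$. The final step is to observe that the only such distribution that also lies in $L^{p'}(\rn)$ is $g=0$, since a nonzero polynomial in $x_n$ cannot be $p'$-integrable over the unbounded $x_n$-axis.

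For $\Phi_0(\bbr^n)$ the argument is entirely parallel but simpler. Here $\Phi_0(\bbr^n)$ consists of Schwartz functions orthogonal to all polynomials (Proposition \ref{jikl1}), which is the Fourier image of $\Psi_0(\bbr^n)$, the functions vanishing to infinite order at the single point $x=0$. An annihilating $g\in L^{p'}(\rn)$ now forces $\hat g$ to be supported at the origin, hence $\hat g$ is a finite linear combination of derivatives of the Dirac delta $\delta_0$, and therefore $g$ is a polynomial on $\rn$. Again the only polynomial in $L^{p'}(\rn)$ is the zero function, completing the proof. This is exactly where the restriction $1<p<\infty$ (equivalently $1<p'<\infty$, so that nonzero polynomials are genuinely non-integrable and the duality $(L^p)^*=L^{p'}$ is available) is used.

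The main obstacle — and the step deserving the most care — is the structural claim that a tempered distribution whose Fourier transform is supported on a hyperplane (respectively, at a point) must be a polynomial of the relevant type. This rests on the classical theorem that a distribution supported on a linear subspace is a finite sum of transverse derivatives of the surface measure; one must verify that the annihilation of all of $\Phi(\bbr^n)$ is strong enough to pin $\supp\hat g$ inside $\{\xi_n=0\}$, which is precisely the content one extracts from the infinite-order vanishing condition defining $\Psi(\bbr^n)$. Once this support statement is in hand, the passage to $g=0$ via $L^{p'}$-integrability is routine. I would cite \cite[Theorem 2.33]{Sam} for the detailed verification, as the proposition is stated as a reference result, and present the duality skeleton together with the support-on-a-hyperplane reduction as the essential argument.
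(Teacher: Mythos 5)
The paper gives no inline proof of this proposition — it is quoted from Samko \cite[Theorem 2.33]{Sam} — and your duality argument (an annihilator $g\in L^{p'}(\rn)$ has $\hat g$ supported on the hyperplane $\{\xi_n=0\}$, resp.\ at the origin, hence $g$ is a polynomial in $x_n$, resp.\ in $x$, hence $g=0$ since $p'<\infty$) is precisely the standard proof behind that citation. The only point to handle carefully when writing it out is that the coefficients in $g(x)=\sum_j a_j(x')x_n^j$ are a priori tempered distributions in $x'$, which is resolved by pairing with $\chi\in C_c^\infty(\bbr^{n-1})$ and using H\"older's inequality to see that $x_n\mapsto\langle g(\cdot,x_n),\chi\rangle$ is a polynomial lying in $L^{p'}(\bbr)$, hence identically zero.
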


\begin{proposition} \label{iqah}  {\rm (cf.  \cite[Lemma 3.11]{Ru96}, \cite{Yos})} Let $1\le p,q < \infty$.  If $f\in L^p (\rn)$ and  $g \in L^q (\rn)$ coincide
as $\Phi'$-distributions or as $\Phi_0'$-distributions, then they coincide almost everywhere on $\rn$.
 \end{proposition}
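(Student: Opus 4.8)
The plan is to set $h=f-g$, regard it as a tempered distribution in $S'(\rn)$, and prove $h=0$ almost everywhere. The hypothesis is that $h$ annihilates $\Phi(\rn)$ (resp. $\Phi_0(\rn)$), i.e. $\langle h,\varphi\rangle=0$ for every such $\varphi$. Since $\Phi(\rn)=\mathcal{F}[\Psi(\rn)]$ by Definition \ref{Semyanistyi}, I would write $\varphi=\hat\psi$ with $\psi\in\Psi(\rn)$ and use the definition (\ref{ftrd12}) of the distributional Fourier transform, $\langle h,\hat\psi\rangle=\langle\hat h,\psi\rangle$, so that the hypothesis becomes $\langle\hat h,\psi\rangle=0$ for all $\psi\in\Psi(\rn)$. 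A tempered distribution annihilates every Schwartz function vanishing to infinite order on the hyperplane $\{x_n=0\}$ if and only if its support lies in that hyperplane (one direction is the standard pairing of a distribution supported on a submanifold with functions flat along it; the converse is immediate by testing against bumps away from the hyperplane). Hence $\supp\hat h\subseteq\{x_n=0\}$. In the $\Phi_0$-case, replacing $\Psi$ by $\Psi_0(\rn)$ (functions flat at the origin) yields instead $\supp\hat h\subseteq\{0\}$.

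Next I would invoke the structure theorem for a tempered distribution supported on a linear subspace. Being tempered, $\hat h$ has finite transverse order $N$, so
\[
\hat h=\sum_{j=0}^N u_j(\xi')\otimes\delta^{(j)}(\xi_n),\qquad u_j\in S'(\bbr^{n-1}).
\]
Applying the inverse Fourier transform sends $\delta^{(j)}(\xi_n)$ to a constant multiple of $x_n^j$ and each $u_j$ to some $g_j\in S'(\bbr^{n-1})$, so that $h(x',x_n)=\sum_{j=0}^N x_n^j\,g_j(x')$. Because $f\in L^p$ and $g\in L^q$ with $p,q\ge 1$ are locally integrable, $h\in L^1_{\mathrm{loc}}(\rn)$; pairing this identity with test functions in $x_n$ having prescribed moments solves for the $g_j$ as locally integrable functions and shows the identity holds pointwise a.e. In the $\Phi_0$-case, $\supp\hat h\subseteq\{0\}$ forces $\hat h$ to be a finite combination of derivatives of $\delta$, so $h$ is an ordinary polynomial on $\rn$.

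Finally I would exploit the global integrability that distinguishes $p,q<\infty$. By Fubini, for almost every $x'$ one has $f(x',\cdot)\in L^p(\bbr)$ and $g(x',\cdot)\in L^q(\bbr)$, so the one-variable function $x_n\mapsto\sum_j x_n^j g_j(x')$ lies in $L^p(\bbr)+L^q(\bbr)$. A nonzero polynomial $P$ cannot be so decomposed when $p,q<\infty$: if $P=u+v$ with $u\in L^p(\bbr)$, $v\in L^q(\bbr)$, then Hölder's inequality on $[-R,R]$ gives $\int_{-R}^{R}|P|\le (2R)^{1/p'}\|u\|_p+(2R)^{1/q'}\|v\|_q=o(R)$, whereas the left side grows at least like $R$ for a nonzero polynomial, a contradiction. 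Hence $g_j(x')=0$ for each $j$ and a.e. $x'$, so $h=0$ a.e. The $\Phi_0$-case is the same, with $h\in L^p(\rn)+L^q(\rn)$ a polynomial ruled out by the identical Hölder estimate on balls $B_R$. The main obstacle I expect is the rigorous second step: verifying that $\hat h$ has finite transverse order so that the structure theorem applies in the tempered category, and then converting the purely distributional representation of $h$ into the a.e. pointwise identity needed for the fiberwise argument; the essential use of $p,q<\infty$ (the statement fails at $p$ or $q=\infty$, where constants survive) should be made explicit there.
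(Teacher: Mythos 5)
Your argument is correct. There is nothing in the paper to compare it against line by line: Proposition \ref{iqah} is stated with a citation to \cite[Lemma 3.11]{Ru96} and \cite{Yos}, and no proof is given in the text. Your route is in fact the standard one behind those references: pass to the Fourier side via (\ref{ftrd12}); observe that annihilating $\Phi$ (resp. $\Phi_0$) forces $\supp \hat h$ into $\{\xi_n=0\}$ (resp. $\{0\}$) --- and this is the easy direction, obtained by testing against bumps supported off the exceptional set; use the structure theorem to write $h$ as a polynomial in $x_n$ with coefficients in $S'(\bbr^{n-1})$ (resp. an ordinary polynomial); and finally rule out nonzero polynomials by the growth-versus-integrability estimate, which is exactly where the hypothesis $p,q<\infty$ enters. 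The two technical points you flagged are the right ones, and both are standard: the structure theorem in the tempered category holds because a tempered distribution has globally finite order, so the transverse order is bounded and the coefficients $u_j$, recovered by pairing against $\varphi(\xi')\,\xi_n^j\chi(\xi_n)$ with a cutoff $\chi$, are automatically in $S'(\bbr^{n-1})$; and the passage from the distributional identity $h=\sum_j x_n^j g_j(x')$ to an a.e.\ identity works as you say, since pairing with $C_c^\infty$ functions of $x_n$ having prescribed moments exhibits each $g_j$ as an $L^1_{loc}$ function, and two locally integrable functions that agree as distributions agree a.e. Your closing observation that constants survive when $p$ or $q$ equals $\infty$ correctly shows the restriction $p,q<\infty$ is sharp.
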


\section {Two modifications of the Radon transform over hyperplanes}

 Let $\Pi_n$ be  the set of all unoriented hyperplanes in
$\bbr^n$.  The Radon transform of a sufficiently good  function $f$ on $\rn$
  is a function on $\Pi_n$  defined by the formula
\be\label{rtra} (Rf)(h)= \intl_{h} f(x) \,d_h x,\qquad h \in
\Pi_n,\ee where $d_h x$ stands for the induced Euclidean measure on $h$. To study this transform using the tools of Analysis, we need to endow
 $\Pi_n$ with parametrization which converts $Rf$ into a function on some measure space. Several different parametrizations of $\Pi_n$  are known; see, e.g., \cite[Section 4.1]{Ru15}.
 The most common one (cf. \cite {H11}) is the  parametrization of
$h \in \Pi_n$  by the pair $(\theta,t) \in  S^{n-1}\times \bbr$, so that
\be\label{hppl} h\equiv h
(\theta,t)=\{x\in \bbr^n: x \cdot \theta=t\}. \ee
 One can think of $(\theta,t)$  as a
point of the cylinder $ Z_n=S^{n-1}\times \bbr$ equipped with the product measure $d\theta dt$. Since
\be\label{oo90der} h
(\theta,t)=h (-\theta,-t),\ee the equality (\ref{hppl}) realizes a
two-to-one correspondence between $Z_n$ and
$\Pi_n$. Thus, every function $\vp$ on $\Pi_n$ can be identified with an even function $\vp (\theta,t)$ on  $Z_n$, so that
 \be\label{adas}\vp (\theta,t)=\vp (-\theta,-t) \quad \forall\; (\theta,t) \in Z_n.\ee
 In the $(\theta,t)$-language, the Radon transform (\ref{rtra}) has the form
\be\label{rtra1} (Rf) (\theta, t)\equiv (R_\theta f) (t)=\intl_{\theta^{\perp}} f(t\theta +
y) \,d_\theta y,\ee where $\theta^\perp=\{x: \, x \cdot \theta=0\}$ is the hyperplane
orthogonal to $\theta$ and passing through the origin, and
$d_\theta y$ denotes the Euclidean measure on $\theta^{\perp}$. Clearly, \be \label{972sert} (Rf) (\theta,t) = (Rf)(-\theta, -t).\ee
 If $f$ is a  radial function,   $f(x) \!\equiv \!f_0(|x|)$, then
\be\label{rese}
 (Rf)(\theta, t) = \sigma_{n-2} \intl^\infty_{|t|}\! f_0 (r)
(r^2-t^2)^{(n-3)/2}r dr\ee
for all $\theta \in S^{n-1}$; see, e.g., \cite [Lemma 4.17]{Ru15}.

A certain inconvenience of the $(\theta,t)$-parametrization is that  the source space $\rn$ and the target space  $Z_n$ have different geometry and the harmonic analysis on $Z_n$ is more involved than that on $\rn$.

An alternative parametrization can be used if we eliminate a subset of  all hyperplanes which are parallel to the $x_n$-axis. This subset has measure zero and can be ignored in some considerations.
 The remaining set $\tilde\Pi_n$ contains only those  hyperplanes which  meet the last coordinate axis.
 Every hyperplane  $h \in \tilde\Pi_n$  can be parametrized by a point $x=(x',x_n)\in \rn$, so that
 \be
\label{par1} h=\{y=(y',y_n)\in \rn: \, y' \in \bbr^{n-1}, \; y_n=x'\cdot y' + x_n\}.\ee
The corresponding Radon-type transform has the form
\be\label {bart22}
(Tf)(x)\equiv (T_{x'}f)(x_n) =\intl_{\bbr^{n-1}} f(y', x_n + x'\cdot y')\, dy',\quad x\in \rn. \ee
The dual transform $T^*$, satisfying $\langle Tf,g \rangle=\langle f, T^* g \rangle$, has a similar form
\be\label {dbart}
(T^*g)(x)\equiv (T^*_{x'}g)(x_n)  =\intl_{\bbr^{n-1}} g(y', x_n - x'\cdot y')\, dy'. \ee
Clearly,
\be\label {dbart7}
  (T^*g)(x', x_n)\!=\! (Tg)(-x', x_n)\!=\!(T\dot g)(x', -x_n), \ee
where $\dot g (y', y_n)\!=\!g (y', -y_n)$.

An advantage of $T$ in comparison with $R$ is that $T$ maps functions on $\rn$ to functions on $\rn$.
 By the standard  Calculus,
 \be\label{con1} (Rf)(\theta, t)=(Rf)(h)=\sqrt{1+|x'|^2}\,
(T f)(x',x_n).\ee
 Different  parametrizations  are related by
 \be \label{change1}
 \theta=\frac{x'-e_n}{\sqrt{1+|x'|^2}}, \qquad
t=-\frac{x_n}{\sqrt{1+|x'|^2}},\ee
\be \label{change2}
x'=-\frac{\theta'}{\theta_n}, \qquad x_n=\frac{t}{\theta_n}.\ee
The corresponding  transition mappings are defined on functions $\vp (x)\equiv\vp(x',x_n)$ and $\psi (\theta, t)$ by
\be \label{tonv} (\Lam_0\vp)(\theta,
t)=|\theta_n |^{-1}\vp \left (-\frac{\theta'}{\theta_n},\frac{t}{\theta_n}\right),
\ee
\be\label{tinv} (\Lam_0^{-1} \psi)(x)=(1+|x'|^2)^{-1/2} \psi \left
(\frac{x'-e_n}{\sqrt{1+|x'|^2}}, -\frac{x_n}{\sqrt{1+|x'|^2}}\right).\ee

\begin{lemma}
The following equalities  hold:
 \bea \label{45}(Rf)(\theta, t)&=& (\Lam_0 T f)(\theta, t), \qquad  \theta_n \neq 0;\\
\label{46v} (T f)(x)&=& (\Lam_0^{-1} Rf)(x).\eea
Moreover, for every $1\le p<\infty$ and a real number $\nu$,
\be\label {tae}
\intl_{Z_n} |t|^{\nu p} |(Rf)(\theta, t)|^p \,  d\theta dt= 2 \intl_{\bbr^{n}} \!\!\frac{|x_n|^{\nu p}\,  |(T f) (x)|^p }{(1\!+\!|x'|^2)^{(n-p+\nu p +1)/2}}\, dx.\ee
It is assumed  that either side of the corresponding equality exists in the Lebesgue sense.
\end{lemma}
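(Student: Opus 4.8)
The plan is to verify the two pointwise identities (\ref{45})--(\ref{46v}) by direct substitution and to reserve the real work for the weighted $L^p$-identity (\ref{tae}), which I would obtain by a change of variables on the sphere. The single algebraic fact that drives the pointwise part is that, under (\ref{change1})--(\ref{change2}), the last coordinate of $\theta$ equals $\theta_n=-(1+|x'|^2)^{-1/2}$; indeed $x'-e_n=(x_1', \ldots, x_{n-1}', -1)$ has $|x'-e_n|^2=1+|x'|^2$, whence $|\theta_n|^{-1}=\sqrt{1+|x'|^2}$. Granting this, (\ref{45}) is immediate: by (\ref{tonv}) and (\ref{change2}), $(\Lam_0 Tf)(\theta,t)=|\theta_n|^{-1}(Tf)(x',x_n)=\sqrt{1+|x'|^2}\,(Tf)(x',x_n)$, which is $(Rf)(\theta,t)$ by (\ref{con1}). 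Formula (\ref{46v}) follows in the same manner from (\ref{tinv}) and (\ref{con1}), or by checking that $\Lam_0$ and $\Lam_0^{-1}$ are mutually inverse on the relevant domains.

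For the integral identity I would first use the evenness (\ref{972sert}). The map $x'\mapsto\theta=(x'-e_n)/\sqrt{1+|x'|^2}$ has $\theta_n<0$ throughout, so it covers only the open lower hemisphere $\{\theta_n<0\}$; the equator $\{\theta_n=0\}$ is $d\theta$-null, and the integrand $|t|^{\nu p}|(Rf)(\theta,t)|^p$ is invariant under $(\theta,t)\mapsto(-\theta,-t)$ by (\ref{972sert}). Hence
\[
\intl_{Z_n}|t|^{\nu p}|(Rf)(\theta,t)|^p\,d\theta\,dt=2\intl_{\{\theta_n<0\}}|t|^{\nu p}|(Rf)(\theta,t)|^p\,d\theta\,dt,
\]
and it remains to evaluate the right-hand side in the variables $x=(x',x_n)$.

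The heart of the matter is the Jacobian. Parametrizing the lower hemisphere by the central (gnomonic) projection $\theta(x')=(x'-e_n)/\sqrt{1+|x'|^2}$, I would compute the induced surface element and obtain $d\theta=(1+|x'|^2)^{-n/2}\,dx'$; for fixed $x'$, the relation $t=-x_n/\sqrt{1+|x'|^2}$ gives $dt=(1+|x'|^2)^{-1/2}\,dx_n$, so $d\theta\,dt=(1+|x'|^2)^{-(n+1)/2}\,dx$. Substituting this together with $|t|^{\nu p}=|x_n|^{\nu p}(1+|x'|^2)^{-\nu p/2}$ and, from (\ref{con1}), $|(Rf)(\theta,t)|^p=(1+|x'|^2)^{p/2}|(Tf)(x)|^p$, and collecting the exponents of $(1+|x'|^2)$ in the denominator as $\tfrac{\nu p}{2}-\tfrac{p}{2}+\tfrac{n+1}{2}=\tfrac{n-p+\nu p+1}{2}$, I arrive at precisely (\ref{tae}).

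The one step that demands genuine care is the surface-measure formula $d\theta=(1+|x'|^2)^{-n/2}\,dx'$ for the gnomonic parametrization, i.e. the evaluation of $\sqrt{\det g}$ for the pulled-back metric $g_{jk}=\partial_{x_j'}\theta\cdot\partial_{x_k'}\theta$. Writing $\rho=\sqrt{1+|x'|^2}$, one finds that $\partial_{x_j'}\theta$ differs from $\rho^{-1}$ times the identity by a rank-one term, and the determinant collapses to $\rho^{-n}$; this is the only point beyond bookkeeping, and I would present it as a short self-contained computation (or invoke the standard area element for central projection). Everything else is substitution and tracking powers of $(1+|x'|^2)$.
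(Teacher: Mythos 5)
Your proposal is correct and follows essentially the same route as the paper: the pointwise identities are read off from (\ref{con1}), and (\ref{tae}) is obtained by converting the spherical integral into an integral over $\bbr^{n-1}$ via central projection with density $(1+|x'|^2)^{-n/2}$ and then rescaling $t$ to $x_n$. The only organizational difference is that the paper invokes the two-hemisphere formula (\ref{teq1}) quoted from \cite{Ru15}, so the factor $2$ emerges from two equal terms after the change of variables, whereas you first halve the integral using the evenness (\ref{972sert}) and then compute the gnomonic Jacobian yourself; both computations are the same in substance.
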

\begin{proof}
 The  equalities  (\ref{45}) and (\ref{46v}) follow from (\ref{con1}). To prove (\ref{tae}), we write the left-hand side as
\[I\equiv \intl_{\bbr} |t|^{\nu p} dt \intl_{S^{n-1}} |\theta_n |^{-p} \left |  (T f)\left (-\frac{\theta'}{\theta_n},\frac{t}{\theta_n}\right)\right |^p d\theta.\]
The integral over $S^{n-1}$ can be converted into the integral over $\bbr^{n-1}$ by the formula \cite [Proposition 1.37 (i)]{Ru15}
 \be\label{teq1} \intl_{S^{n-1}}\!\!\!
F (\theta)\, d\theta\!=\!\!\intl_{\bbr^{n-1}}\!\!\left [F \Big (\frac{x'\!+\!e_n}{|x'\!+\!e_n|} \Big )\!+\!F \Big
(\frac{x'\!-\!e_n}{|x'\!-\!e_n|} \Big )\right ] \frac{dx'}{(1\!+\!|x'|^2)^{n/2}}\,.\ee
In our case,
\[
F (\theta)= |\theta_n |^{-p} \left |  (T f)\left (-\frac{\theta'}{\theta_n},\frac{t}{\theta_n}\right)\right |^p.\]
Hence
\bea
I\!\!&=&\!\!
 \intl_{\bbr} |t|^{\nu p} dt \nonumber\\
&\times&\!\! \intl_{\bbr^{n-1}} \!\!\left [ |(T f) (-x'\! +\!t\sqrt{1\!+\!|x'|^2} e_n)|^p\! + \! |(T f) (x' \!-t\sqrt{1\!+\!|x'|^2} e_n)|^p\right ]\nonumber\\
&\times& \frac{dx'}{(1+|x'|^2)^{(n-p)/2}}.\nonumber\eea
Changing variables, we obtain
\[
I=2\intl_{\bbr^{n}} \!\!\frac{|x_n|^{\nu p}}{(1\!+\!|x'|^2)^{(n-p+\nu p +1)/2}} \,|(T f) (x)|^p \,dx, \]
as desired.
\end{proof}

Both transforms $R$ and $T$ are well defined  on functions $f\in L^1(\rn)$ because by Fubini's theorem,
\be\label{vali}\intl_{-\infty}^\infty  \!(Rf) (\theta, t)dt\! = \!\intl_{-\infty}^\infty (Tf) (x', x_n) dx_n\!=\!\intl_{\bbr^n}\! f(x) dx. \ee

The  Radon transforms $R$, $T$,  and  $\tilde T$ are intimately related to  the
  Fourier transform.
\begin{theorem}\label{pror4} Let $f \in  L^1(\bbr^n)$.
Then
\be\label{prsl}  (R_\theta f)^\wedge (\rho)=\hat f(\theta\rho), \qquad \rho \in \bbr, \quad \theta \in S^{n-1}; \ee
\be\label{prslT}
(T_{x'}f)^\wedge(\xi_n)=\hat f(-x'\xi_n, \xi_n), \qquad   (T^*_{x'}f)^\wedge(\xi_n)=\hat f(x'\xi_n, \xi_n).  \ee
\end{theorem}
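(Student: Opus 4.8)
The plan is to derive all three identities directly from the definitions, using Fubini's theorem together with an appropriate change of variables; the hypothesis $f\in L^1(\rn)$ guarantees absolute convergence throughout, so every interchange of integration is legitimate and the inner slices are meaningful functions of a single variable (their integrability is already encoded in (\ref{vali})). First I would prove (\ref{prsl}). Writing out the one–dimensional Fourier transform of $R_\theta f$ in the variable $t$,
\be
(R_\theta f)^\wedge(\rho)=\intl_{\bbr}\intl_{\theta^\perp} f(t\theta+y)\, e^{it\rho}\, d_\theta y\, dt,
\ee
I would pass to the orthogonal decomposition $x=t\theta+y$, with $t\in\bbr$ and $y\in\theta^\perp$. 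Since the map $(t,y)\mapsto t\theta+y$ is a linear isometry, the product measure $dt\, d_\theta y$ coincides with Lebesgue measure $dx$ on $\rn$; and because $\theta$ is a unit vector with $y\cdot\theta=0$, we have $t=x\cdot\theta$, hence $e^{it\rho}=e^{ix\cdot(\rho\theta)}$. The double integral therefore collapses to $\intl_{\bbr^n} f(x)\, e^{ix\cdot(\rho\theta)}\,dx=\hat f(\rho\theta)$, which is (\ref{prsl}).

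For the first identity in (\ref{prslT}) I would compute the Fourier transform of $T_{x'}f$ in $x_n$,
\be
(T_{x'}f)^\wedge(\xi_n)=\intl_{\bbr}\intl_{\bbr^{n-1}} f(y',x_n+x'\cdot y')\, e^{ix_n\xi_n}\, dy'\, dx_n,
\ee
and then perform, for each fixed $y'$, the affine substitution $y_n=x_n+x'\cdot y'$, so that $dx_n=dy_n$. This converts the phase into $e^{ix_n\xi_n}=e^{iy_n\xi_n}\,e^{-i(x'\cdot y')\xi_n}=e^{iy_n\xi_n}\,e^{iy'\cdot(-x'\xi_n)}$, and recognizing the resulting integral as the full $n$-dimensional Fourier transform of $f$ evaluated at $(-x'\xi_n,\xi_n)$ yields $(T_{x'}f)^\wedge(\xi_n)=\hat f(-x'\xi_n,\xi_n)$. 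The second identity in (\ref{prslT}) then costs no extra work: by (\ref{dbart7}) we have $(T^*_{x'}f)(x_n)=(T_{-x'}f)(x_n)$, so replacing $x'$ by $-x'$ in the formula just established gives $(T^*_{x'}f)^\wedge(\xi_n)=\hat f(x'\xi_n,\xi_n)$.

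There is no serious obstacle in this argument; the only points that demand care are the identification $dt\, d_\theta y=dx$ under the orthogonal splitting used for (\ref{prsl}) and the bookkeeping of the exponential factor under the affine change of variables used for (\ref{prslT}). Both become routine once $f\in L^1(\rn)$ is invoked to license Fubini's theorem, which is the one hypothesis doing all the analytic work.
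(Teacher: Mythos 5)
Your proposal is correct and follows essentially the same route as the paper: the computation of $(T_{x'}f)^\wedge(\xi_n)$ via the substitution $y_n = x_n + x'\cdot y'$ and the reduction of the $T^*$ identity to the $T$ identity through $(T^*_{x'}f)(x_n)=(T_{-x'}f)(x_n)$ are exactly the paper's steps, with Fubini licensed by $f\in L^1(\rn)$ in both treatments. The only difference is that for (\ref{prsl}) the paper simply cites the Fourier slice theorem from the literature, whereas you supply its standard proof via the orthogonal decomposition $x=t\theta+y$ and the identification $dt\,d_\theta y = dx$; that argument is correct and is precisely the classical one being cited.
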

 \begin{proof} The statement (\ref{prsl}) is well known in the literature as  the  {\it Fourier slice theorem}; see, e.g., \cite[p. 11]{Nat},  \cite [p. 129]{Ru15}.
 The equalities (\ref{prslT}) have the same meaning for the transversal transform and its dual.
 The first equality in (\ref{prslT}) can be found in \cite[Lemma 4.143]{Ru15} and
 its proof is straightforward:
 \bea
(T_{x'}f)^\wedge(\xi_n)&=&\intl_{-\infty}^\infty e^{i x_n \xi_n} dx_n \intl_{\bbr^{n-1}}  f(y', x_n + x'\cdot y')\, dy'\nonumber\\
&=&\intl_{\rn} f(y)  e^{i  \xi_n (y_n-  x'\cdot y')} dy=\hat f(-x'\xi_n, \xi_n).\nonumber\eea
The second equality in (\ref{prslT}) follows from the first one if we note that  $(T^*f)(x', x_n) =(Tf)(-x', x_n)$; cf. (\ref{dbart}).
\end{proof}

\begin{lemma}\label{lscerd} \cite[formulas (4.4.4), (4.13.6)]{Ru15} The following relations hold:
  \bea\label{duas3} \intl_{Z_n} \frac{(Rf)(\theta,
t)}{(1+t^2)^{n/2}}\,d\theta dt&=& \sig_{n-1}\intl_{\bbr^n}
\frac{f(x)}{(1+|x|^2)^{1/2}}\,dx, \\
 \label{eq2}\intl_{\bbr^n} \frac{ (Tf)(x)}{(1+|x|^2)^{n/2}} \,dx
&=&\frac{\sig_{n-1}}{2}\, \intl_{\rn}\frac{ f(x)}{(1+|x|^2)^{1/2}} \,dx.  \eea
It is assumed that either side of the corresponding equality exists in the Lebesgue sense.
\end{lemma}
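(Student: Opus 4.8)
The plan is to prove each identity by unfolding the definition of the transform, applying Fubini's theorem to interchange the order of integration, and then evaluating the resulting inner integral in closed form. The point is that the weight $(1+t^2)^{-n/2}$ (resp. $(1+|x|^2)^{-n/2}$) is tuned precisely so that the inner integral collapses to the profile $(1+|x|^2)^{-1/2}$ on the source side.

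For (\ref{duas3}) I would first use the orthogonal decomposition $x=t\theta+y$ with $t=x\cdot\theta$ and $y\in\theta^{\perp}$ to rewrite, for each fixed $\theta$, the combined $t$- and $\theta^{\perp}$-integrals as a single integral over $\rn$. Since $1+t^2=1+(x\cdot\theta)^2$ along this slice, this gives
\[
\intl_{Z_n}\frac{(Rf)(\theta,t)}{(1+t^2)^{n/2}}\,d\theta\,dt=\intl_{S^{n-1}}\Big(\intl_{\rn}\frac{f(x)}{(1+(x\cdot\theta)^2)^{n/2}}\,dx\Big)\,d\theta.
\]
After interchanging the two integrations (justified by Fubini since $f\in L^1$, treating $f\ge 0$ first and then splitting into positive and negative parts), the claim reduces to the pointwise identity
\[
\intl_{S^{n-1}}\frac{d\theta}{(1+(x\cdot\theta)^2)^{n/2}}=\frac{\sigma_{n-1}}{(1+|x|^2)^{1/2}}.
\]
By rotational invariance the left side depends only on $|x|$; writing $u=\omega\cdot\theta$ with $x=|x|\omega$ and using the standard reduction $\int_{S^{n-1}}g(\omega\cdot\theta)\,d\theta=\sigma_{n-2}\int_{-1}^{1}g(u)(1-u^2)^{(n-3)/2}\,du$, this becomes a one-variable Euler-type integral evaluated by the Beta function (equivalently the Gauss reduction ${}_2F_1(n/2,\tfrac12;n/2;-|x|^2)=(1+|x|^2)^{-1/2}$). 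The constant comes out to $\sigma_{n-1}$ because $\sigma_{n-2}\,B(\tfrac12,\tfrac{n-1}{2})=\sigma_{n-1}$.

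For (\ref{eq2}) I would proceed analogously. Inserting the definition of $Tf$ and substituting $s=x_n+x'\cdot y'$ in the $x_n$-integral turns the left side into
\[
\intl_{\bbr^{n-1}}\!dy'\intl_{\bbr}f(y',s)\Big(\intl_{\bbr^{n-1}}\frac{dx'}{(1+|x'|^2+(s-x'\cdot y')^2)^{n/2}}\Big)\,ds,
\]
so everything again rests on the inner $x'$-integral. The denominator is the positive-definite quadratic $x'^{\top}(I+y'y'^{\top})x'-2s\,y'^{\top}x'+(1+s^2)$; completing the square (with $\det(I+y'y'^{\top})=1+|y'|^2$ and $(I+y'y'^{\top})^{-1}$ given by Sherman--Morrison) reduces it, after the linear substitution diagonalizing the form, to a radial integral $\int_{\bbr^{n-1}}(|w|^2+m)^{-n/2}\,dw$ with $m=(1+|y'|^2+s^2)/(1+|y'|^2)$. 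This radial integral is again a Beta integral equal to $\tfrac{\sigma_{n-1}}{2}\,m^{-1/2}$, and combining it with the Jacobian $(1+|y'|^2)^{-1/2}$ yields exactly $\tfrac{\sigma_{n-1}}{2}(1+|y'|^2+s^2)^{-1/2}$, the desired kernel. Alternatively, (\ref{eq2}) can be deduced from (\ref{duas3}) through the relation $(Rf)(\theta,t)=\sqrt{1+|x'|^2}\,(Tf)(x)$ in (\ref{con1}) together with the change of variables (\ref{change1})--(\ref{change2}) and the sphere-to-$\bbr^{n-1}$ formula (\ref{teq1}), the factor $2$ in (\ref{eq2}) reflecting the two-to-one nature of that correspondence.

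The routine parts are the Fubini interchanges and the substitutions; the only real work, and the step I expect to be the main obstacle, is the explicit evaluation of the inner integrals so that the $n$-dependent constants telescope to $\sigma_{n-1}$ (resp. $\sigma_{n-1}/2$). The cleanest way to organize this is to funnel both inner integrals into the single Beta-function identity $\sigma_{n-2}\,B(\tfrac12,\tfrac{n-1}{2})=\sigma_{n-1}$, which is exactly what makes the weights $(1+t^2)^{-n/2}$ and $(1+|x|^2)^{-n/2}$ resonant with the profile $(1+|x|^2)^{-1/2}$.
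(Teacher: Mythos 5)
Your proposal is correct. Note, however, that the paper itself offers no proof of this lemma: it is stated with a citation to \cite[formulas (4.4.4), (4.13.6)]{Ru15}, so there is no in-paper argument to compare against, and what you have written is a self-contained substitute for that citation. Your computations check out: for (\ref{duas3}), the slice decomposition $x=t\theta+y$ does reduce everything to the kernel identity $\int_{S^{n-1}}(1+(x\cdot\theta)^2)^{-n/2}\,d\theta=\sigma_{n-1}(1+|x|^2)^{-1/2}$, and the Euler-integral evaluation together with $\sigma_{n-2}\,B(\tfrac12,\tfrac{n-1}{2})=\sigma_{n-1}$ is exactly right; for (\ref{eq2}), completing the square in $x'^{\top}(I+y'y'^{\top})x'-2s\,y'^{\top}x'+(1+s^2)$ gives the minimum value $m=(1+|y'|^2+s^2)/(1+|y'|^2)$ and Jacobian $(1+|y'|^2)^{-1/2}$, and the radial Beta integral $\tfrac{\sigma_{n-1}}{2}m^{-1/2}$ then assembles to the stated kernel $\tfrac{\sigma_{n-1}}{2}(1+|y'|^2+s^2)^{-1/2}$. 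Your alternative derivation of (\ref{eq2}) from (\ref{duas3}) via (\ref{con1}), (\ref{teq1}) and the two-to-one correspondence is also sound, and is closest in spirit to how the paper itself relates $R$ and $T$ elsewhere (compare the proof of (\ref{tae})). One small point of hygiene: the hypothesis is not $f\in L^1$ but that \emph{either side exists in the Lebesgue sense}, so the Fubini interchange should be phrased as you indicate parenthetically --- apply Tonelli to $|f|$ to see that both sides for $|f|$ are simultaneously finite or infinite, and only then split $f$ into positive and negative parts; with that reading your argument is complete.
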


\begin{theorem} \label{lsu} If $f\in L^p (\rn)$, $1 \le p < n/(n-1)$, then $(Rf)(\theta, t)$ and $(Tf)(x)$ are finite for almost all $(\theta, t) \in Z_n$ and $x\in \rn$, respectively. The
left-hand sides of (\ref{duas3}) and (\ref{eq2}) do not exceed $ c\, ||f||_p$, $\, c=\const$.
\end{theorem}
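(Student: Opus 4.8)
The plan is to deduce everything from the two integral identities (\ref{duas3}) and (\ref{eq2}) of Lemma \ref{lscerd}, whose right-hand sides involve the single weight $w(x)=(1+|x|^2)^{-1/2}$ on $\rn$. First I would reduce to the case $f\ge 0$ by replacing $f$ with $|f|$: since $R$ and $T$ integrate $f$ against nonnegative measures, one has $|(Rf)(\theta,t)|\le (R|f|)(\theta,t)$ and $|(Tf)(x)|\le (T|f|)(x)$ pointwise, so it suffices to control $R|f|$ and $T|f|$. For a nonnegative function both sides of (\ref{duas3}) and (\ref{eq2}) are well defined in $[0,\infty]$, and Lemma \ref{lscerd} asserts their equality; the strategy is to bound the right-hand side and thereby control the left-hand side \emph{before} finiteness is known.

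The heart of the matter is the single estimate
\be\label{holderstep}
\intl_{\bbr^n}\frac{|f(x)|}{(1+|x|^2)^{1/2}}\,dx \le ||f||_p\,\Big(\intl_{\bbr^n}\frac{dx}{(1+|x|^2)^{p'/2}}\Big)^{1/p'},\qquad \frac{1}{p}+\frac{1}{p'}=1,
\ee
obtained by H\"older's inequality. The remaining integral is finite precisely when $p'>n$, equivalently $p<n/(n-1)$; for $p=1$ one uses instead that $w\in L^\infty(\rn)$, so that (\ref{holderstep}) reads simply $\le ||f||_1$. This is exactly the hypothesis on $p$, and it is the only place where it enters. Denoting the resulting finite constant by $c_0$, the right-hand sides of (\ref{duas3}) and (\ref{eq2}) for $|f|$ are at most $\sigma_{n-1}c_0\,||f||_p$ and $(\sigma_{n-1}/2)c_0\,||f||_p$, which are the desired bounds $c\,||f||_p$ with $c=\const$.

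The finiteness a.e.\ then follows by a standard nonnegativity argument. The left-hand side of (\ref{duas3}) applied to $|f|$ is $\int_{Z_n}(R|f|)(\theta,t)(1+t^2)^{-n/2}\,d\theta dt$, an integral of a nonnegative function which we have just shown to be finite; hence the integrand is finite for a.e.\ $(\theta,t)$, and since the weight $(1+t^2)^{-n/2}$ is strictly positive, $(R|f|)(\theta,t)<\infty$ for a.e.\ $(\theta,t)$. The same reasoning applied to (\ref{eq2}) gives $(T|f|)(x)<\infty$ for a.e.\ $x$. Returning to general $f\in L^p(\rn)$, this shows that $Rf$ and $Tf$ are given by absolutely convergent integrals almost everywhere, which completes the proof. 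I do not expect a genuine obstacle here: the only real content is recognizing that the admissible range of $p$ is dictated exactly by the integrability of $(1+|x|^2)^{-p'/2}$, and that Lemma \ref{lscerd} transfers this single-weight estimate on $\rn$ to the mixed objects $Rf$ and $Tf$. The only care required is to run the identities for $f\ge 0$, so that the equalities hold in $[0,\infty]$ prior to establishing finiteness.
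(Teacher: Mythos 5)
Your proposal is correct and follows essentially the same route as the paper, whose entire proof is the single remark that the statement ``follows from (\ref{duas3}) and (\ref{eq2}) by H\"older's inequality''; you have simply supplied the details that remark compresses (reduction to $|f|$, the computation that $(1+|x|^2)^{-1/2}\in L^{p'}(\rn)$ exactly when $p<n/(n-1)$, and the a.e.\ finiteness from positivity of the weights). No discrepancy to report.
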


This statement follows from (\ref{duas3}) and (\ref{eq2})  by H\"older's inequality.

\begin{remark}\label {kaz} The bound $p < n/(n-1)$ in Theorem \ref{lsu} is sharp. For example, if $p \ge n/(n-1)$ and
 \be\label{emily1} f_0(x)=(2+|x|)^{-n/p} (\log (2+|x|))^{-1} \quad (\in
L^p(\bbr^n)),\ee
 then $(Rf_0) (\theta, t)\equiv \infty$; see, e.g., \cite[Theorem 4.28]{Ru15}. By (\ref{46v}), it follows that
$ T f_0\equiv \infty$.
\end{remark}

The next statements provide additional  information about the  mapping properties of $R$ and $T$.

\begin{theorem}\label {o9iur} \cite [Theorem 4.32] {Ru15} Let
\be\label{ovdN} 1\le p < n/(n-1), \qquad \nu=-(n-1)/p', \qquad 1/p+1/p'=1.\ee
 We introduce  the weighted  space
 \be\label {kakx}
L^p_\nu (Z_n)=\{ \vp: ||\vp||^\sim_{p,\nu}\equiv |||t|^\nu \vp||_{L^p(Z_n)}<\infty \}.\ee
Then $||R f||^\sim_{p,\nu}\le A \,||f||_{p}$, where
\be\label{op983r}
A=||R||=\frac{\displaystyle  {2^{1/p}\, \pi^{(n-1)/2}\,\Gam \left(\frac{1 -n/p'}{2}\right)}}{\displaystyle{\Gam \left(\frac {n}{2p}\right)}}\,
.\ee
\end{theorem}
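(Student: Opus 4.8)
The plan is to prove the inequality directly, with the sharp constant, by the $L^p$ Schur test using power-function test weights, exploiting the fact that the exponent $\nu=-(n-1)/p'$ is precisely the one making the estimate scale invariant. Since $|(Rf)(\theta,t)|\le (R|f|)(\theta,t)$, I may assume $f\ge 0$, so that $R$ is a positive operator whose ``kernel'' is the hyperplane measure: formally $(Rf)(\theta,t)=\int_{\rn}\delta(x\cdot\theta-t)f(x)\,dx$, viewed as a map $L^p(\rn,dx)\to L^p(Z_n,|t|^{\nu p}\,d\theta\,dt)$. The Schur test then requires positive weights $h$ on $\rn$ and $H$ on $Z_n$ with $\int_{\rn}\delta(x\cdot\theta-t)\,h(x)^{p'}dx\le C_1\,H(\theta,t)^{p'}$ and $\int_{Z_n}\delta(x\cdot\theta-t)\,H(\theta,t)^p|t|^{\nu p}\,d\theta\,dt\le C_2\,h(x)^p$, yielding $\|R\|\le C_1^{1/p'}C_2^{1/p}$.

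I would take $h(x)=|x|^{-\gamma}$ and $H(\theta,t)=|t|^{(n-1)/p'-\gamma}$ with a free parameter $\gamma$. The first condition is the Radon transform of $h^{p'}$; since $t\theta\perp\theta^\perp$,
\[
\int_{\theta^\perp}|t\theta+y|^{-\gamma p'}\,d_\theta y=\int_{\bbr^{n-1}}(t^2+|y|^2)^{-\gamma p'/2}\,dy=C_1\,|t|^{\,n-1-\gamma p'},
\]
with $C_1=\pi^{(n-1)/2}\,\Gamma\big(\tfrac{\gamma p'-(n-1)}{2}\big)\big/\Gamma\big(\tfrac{\gamma p'}{2}\big)$, convergent for $\gamma p'>n-1$; this is exactly $C_1H^{p'}$. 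In the second condition the delta sets $t=x\cdot\theta$, leaving an integral over directions,
\[
\int_{\sn}|x\cdot\theta|^{\,p[(n-1)/p'-\gamma]+\nu p}\,d\theta=\int_{\sn}|x\cdot\theta|^{-\gamma p}\,d\theta=C_2\,|x|^{-\gamma p},
\]
where the middle equality is precisely the point at which $\nu=-(n-1)/p'$ is used to cancel the $(n-1)/p'$ exponent, and $C_2=2\pi^{(n-1)/2}\,\Gamma\big(\tfrac{1-\gamma p}{2}\big)\big/\Gamma\big(\tfrac{n-\gamma p}{2}\big)$, convergent for $\gamma p<1$. Thus $\|R\|\le C_1^{1/p'}C_2^{1/p}$ for every $\gamma$ in the interval $\big(\tfrac{n-1}{p'},\tfrac1p\big)$, which is nonempty exactly because $(n-1)(p-1)<1$, i.e. $p<n/(n-1)$.

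It remains to optimize in $\gamma$. I would choose $\gamma^\ast=n/(pp')$, which lies in the admissible interval precisely under the hypothesis $p<n/(n-1)$, and which equalizes the Gamma-arguments: using the identity $p+p'=pp'$ one checks $\tfrac{\gamma^\ast p'}{2}=\tfrac{n-\gamma^\ast p}{2}=\tfrac{n}{2p}$ and $\tfrac{\gamma^\ast p'-(n-1)}{2}=\tfrac{1-\gamma^\ast p}{2}=\tfrac{1-n/p'}{2}$. Hence $C_2=2C_1$ at $\gamma^\ast$, and
\[
C_1^{1/p'}C_2^{1/p}=2^{1/p}C_1=\frac{2^{1/p}\,\pi^{(n-1)/2}\,\Gamma\big(\tfrac{1-n/p'}{2}\big)}{\Gamma\big(\tfrac{n}{2p}\big)}=A,
\]
the asserted bound. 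That $\gamma^\ast$ is the minimizer follows from a one-line digamma computation, the two argument-pairings above being exactly the stationarity condition.

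Sharpness ($A=\|R\|$) I would obtain by testing on the scale-invariant borderline function $f(x)=|x|^{-n/p}$ (suitably truncated to lie in $L^p$), the formal extremizer sitting at the common endpoint of the two convergence ranges and corresponding to $\gamma^\ast=n/(pp')$. The main obstacle I anticipate is twofold: justifying the Schur test for the singular hyperplane kernel (handled by Tonelli's theorem on $f\ge0$ together with the orthogonal decomposition $x=t\theta+y$ underlying the change of variables in (\ref{tae})), and treating the endpoint $p=1$, where $p'=\infty$ and the power-weight scheme degenerates. In that case $\nu=0$, and (\ref{vali}) gives $\int_{Z_n}(Rf)(\theta,t)\,d\theta\,dt=\sigma_{n-1}\|f\|_1$ directly, so $\|R\|=\sigma_{n-1}$, which agrees with the formula for $A$ by continuity.
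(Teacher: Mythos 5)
Your proof is correct, and it takes a genuinely different route from the one the paper relies on. The paper does not reprove this statement (it cites \cite[Theorem 4.32]{Ru15}), but its proof of the fractional generalization, Theorem \ref{o9iuN}, displays the method used there: substitute $\theta=\gam e_1$, $x=t\gam y$ with $\gam\in O(n)$, split $t>0$ and $t<0$, and apply Minkowski's integral inequality; at $\a=0$ this gives
$||Rf||^\sim_{p,\nu}\le 2^{1/p}\,||f||_p\int_{\{y_1=1\}}|y|^{-n/p}\,d\sig(y)=2^{1/p}C_1||f||_p=A\,||f||_p$
in one stroke, with no free parameter to optimize and no separate treatment of $p=1$. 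Your Schur test with weights $|x|^{-\gam}$, $|t|^{(n-1)/p'-\gam}$ reaches the same constant after optimization, and all your computations check out: $C_1$, $C_2$, the window $((n-1)/p',1/p)$ being nonempty iff $p<n/(n-1)$, the equalizations $\gam^{\ast}p'=n/p$, $\gam^{\ast}p=n/p'$, and $C_1^{1/p'}(2C_1)^{1/p}=A$; the $p=1$ degeneration is correctly repaired via (\ref{vali}), where in fact $A=\sig_{n-1}$ exactly. What your route buys is transparency: the two convergence constraints explain the hypothesis $p<n/(n-1)$, and stationarity identifies $|x|^{-n/p}$ as the formal extremizer, which feeds directly into the lower bound; what Minkowski buys is economy and uniform coverage of $1\le p<n/(n-1)$. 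On the sharpness step, your sketch is implementable but needs one care: with the hard truncation $f_M=|x|^{-n/p}\chi_{\{1\le|x|\le M\}}$ one should substitute $t=M^s$ and use monotone convergence to see that the cutoff radius $\sqrt{M^2/t^2-1}\to\infty$ for a.e. $s\in(0,1)$, so that the full factor $2^{1/p}C_1$ (and not just $C_1$) survives in the ratio; alternatively, regularizing the exponent is cleaner, since for $f_\e(x)=|x|^{-n/p-\e}\chi_{\{|x|>1\}}$ one has $(Rf_\e)(\theta,t)=C_1(\e)|t|^{n-1-n/p-\e}$ exactly on $|t|\ge 1$, whence $||Rf_\e||^\sim_{p,\nu}\big/||f_\e||_p\ge 2^{1/p}C_1(\e)\to A$ as $\e\to 0$, completing $A=||R||$.
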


 The  necessity of the condition $\nu=-(n-1)/p'$  can be proved using the standard scaling argument $x \to \lam x$, $\lam >0$; see, e.g., the proof of Theorem \ref {o9iuN} in the more general situation.

Theorem \ref {o9iur} yields  the corresponding result for  $T$.

 \begin{theorem}\label {o9iu2t}  Let $1\le p < n/(n-1)$,
\[
u(x) = \frac {|x_n|^\nu}{(1+|x'|^2)^{\mu/2}}, \qquad \mu = -n (1-2/p), \qquad \nu=-(n-1)/p',\]
\be\label {space} L^p_u(\bbr^n)=\{ f: ||f||_{p,u}\equiv || u(x) f||_p <\infty \}.\ee
Then
\be\label{op982yf}
||T f||_{p,u}\le  2^{-1/p} A \,||f||_{p},\ee
where the constant $A$ is defined by  (\ref{op983r}).
\end{theorem}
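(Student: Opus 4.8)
The plan is to deduce the weighted estimate for $T$ directly from the weighted estimate for $R$ in Theorem \ref{o9iur}, using the transfer identity (\ref{tae}) as the bridge, with $p$-th powers built in throughout. First I would observe that the left-hand side of (\ref{tae}), namely $\int_{Z_n} |t|^{\nu p}\, |(Rf)(\theta, t)|^p\, d\theta dt$, is precisely the $p$-th power of the weighted norm $||Rf||^\sim_{p,\nu}$ controlled in Theorem \ref{o9iur}, since the exponent $\nu$ and the measure $d\theta dt$ on $Z_n$ match exactly those in the definition (\ref{kakx}) of $L^p_\nu (Z_n)$.

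The crucial (and essentially the only substantive) step is to recognize the right-hand side of (\ref{tae}) as $2\,||Tf||_{p,u}^p$. The numerator $|x_n|^{\nu p}$ already agrees with the numerator of $u(x)^p$, so everything reduces to matching the powers of $(1+|x'|^2)$. I would verify the exponent identity
\[
\frac{n-p+\nu p +1}{2}=\frac{\mu p}{2}
\]
for the prescribed values $\nu=-(n-1)/p'$ and $\mu=-n(1-2/p)$. Writing $1/p'=(p-1)/p$ gives $\nu p=-(n-1)(p-1)=-(np-n-p+1)$, so $n-p+\nu p+1=2n-np=n(2-p)$; on the other side, $\mu p=-n(p-2)=n(2-p)$, so the two exponents coincide. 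Hence the right-hand side of (\ref{tae}) equals $2\int_{\rn} u(x)^p\,|(Tf)(x)|^p\,dx=2\,||Tf||_{p,u}^p$, where the weight $u$ is the one from (\ref{space}).

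Combining these two observations, (\ref{tae}) reads $(||Rf||^\sim_{p,\nu})^p=2\,||Tf||_{p,u}^p$, so that $||Tf||_{p,u}=2^{-1/p}\,||Rf||^\sim_{p,\nu}$. Inserting the bound $||Rf||^\sim_{p,\nu}\le A\,||f||_p$ from Theorem \ref{o9iur}, with $A$ given by (\ref{op983r}), and taking $p$-th roots yields $||Tf||_{p,u}\le 2^{-1/p} A\,||f||_p$, which is exactly (\ref{op982yf}). The main obstacle is purely one of bookkeeping, namely confirming the weight-exponent identity above; there is no genuine analytic difficulty here, since all the hard work (the sharp weighted bound for $R$) is already contained in Theorem \ref{o9iur}, and the exact change of variables linking $R$ and $T$ is supplied by the identity (\ref{tae}).
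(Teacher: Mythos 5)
Your proof is correct and is essentially identical to the paper's own argument, which simply says ``Set $\nu=-(n-1)/p'$ in (\ref{tae})'' and combines it with Theorem \ref{o9iur}. You have merely made explicit the exponent bookkeeping $(n-p+\nu p+1)/2=\mu p/2=n(2-p)/2$ that the paper leaves to the reader.
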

 \begin{proof}  Set $\nu=-(n-1)/p'$ in (\ref{tae}).
  \end{proof}

Let us also consider the Radon transforms of functions $f\in S(\rn)$.

\begin{lemma}\label {from}
If $f\in S(\rn)$, then $T f \in C^\infty (\rn)$. Moreover, the function $x_n \to (Tf)(x', x_n)$ belongs to $S(\bbr)$ for each $x'\in \bbr^{n-1}$, and the function $x' \to (Tf)(x', x_n)$  is tempered for each $x_n \in \bbr$.
\end{lemma}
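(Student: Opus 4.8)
The plan is to prove all three assertions by differentiating the integral (\ref{bart22}) under the integral sign, the whole argument resting on one uniform estimate. Since $f\in S(\rn)$, for every multi-index $\alpha$, every $m\in\bbz_+$, and every $N$ one has
\[
|(y')^{\alpha}\,(\partial_n^{m} f)(y', s)| \le C_{N}\,(1+|y'|)^{|\alpha|-N}, \qquad (y', s)\in \rn ,
\]
and the crucial feature is that this bound is \emph{uniform in the last variable} $s$. This lets me control the integrand of (\ref{bart22}) no matter how the argument $s=x_n+x'\cdot y'$ depends on the variable of integration $y'$; it follows simply from the fact that the Schwartz decay of $\partial_n^m f$ in its first $n-1$ variables holds uniformly with respect to the last one. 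Taking $N>|\alpha|+n-1$, the right-hand side is integrable in $y'$ over $\bbr^{n-1}$ and independent of $(x',x_n)$. The case $\alpha=0$, $m=0$ already shows that $Tf$ is defined (and bounded) everywhere.

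First I would establish smoothness. From $\partial_{x_n}[f(y', x_n+x'\cdot y')]=(\partial_n f)(y', x_n+x'\cdot y')$ and $\partial_{x'_j}[f(y', x_n+x'\cdot y')]=y_j\,(\partial_n f)(y', x_n+x'\cdot y')$, a general derivative of the integrand, for $\beta=(\beta',\beta_n)$, equals $(y')^{\beta'}(\partial_n^{|\beta'|+\beta_n} f)(y', x_n+x'\cdot y')$. By the uniform estimate above each such expression is dominated by a fixed integrable function of $y'$, so the standard theorem on differentiation under the integral sign applies and yields $Tf\in C^\infty(\rn)$ together with
\[
\partial^{\beta}(Tf)(x)=\intl_{\bbr^{n-1}}(y')^{\beta'}\,(\partial_n^{|\beta'|+\beta_n} f)(y', x_n+x'\cdot y')\,dy' .
\]

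Next, for the Schwartz property in $x_n$ with $x'$ fixed, I would bound $|x_n|^{k}|\partial_{x_n}^{m}(Tf)(x',x_n)|$. Writing $x_n=(x_n+x'\cdot y')-x'\cdot y'=s-x'\cdot y'$ gives $|x_n|^{k}\le 2^{k}(|s|^{k}+|x'|^{k}|y'|^{k})$, so the weighted derivative is bounded by a sum of two integrals: in the first the factor $|s|^{k}$ is absorbed by the Schwartz decay of $\partial_n^m f$ in its last argument, and in the second $|y'|^{k}$ is controlled by the decay in $y'$. Both integrals are finite and their bounds do not depend on $x_n$, which proves $x_n\mapsto (Tf)(x',x_n)\in S(\bbr)$. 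For fixed $x_n$ the temperedness in $x'$ is then immediate, and in fact stronger than required: the derivative formula together with the uniform estimate gives $|\partial_{x'}^{\beta'}(Tf)(x',x_n)|\le C_N\int_{\bbr^{n-1}}(1+|y'|)^{|\beta'|-N}\,dy'$, a bound independent of $x'$, so $x'\mapsto (Tf)(x',x_n)$ and all its derivatives are uniformly bounded, hence certainly tempered.

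I expect the only genuine obstacle to be the bookkeeping in the second step: one must verify that the weight $|x_n|^{k}$ can be redistributed through $s$ and $x'\cdot y'$ so that each resulting piece is absorbed by a different feature of the Schwartz decay (decay in the last variable for $|s|^{k}$, decay in $y'$ for $|y'|^{k}$). Everything else reduces to the single uniform estimate above, whose verification is routine.
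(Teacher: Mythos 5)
Your proof is correct and follows essentially the same route as the paper: differentiation under the integral sign justified by the uniform Schwartz decay of $f$, followed by the triangle inequality $|x_n|\le |x_n+x'\cdot y'|+|x'\cdot y'|$ to redistribute the weight in $x_n$ between the decay of $f$ in its last argument and its decay in $y'$. Your additive splitting $|x_n|^k\le 2^k\bigl(|s|^k+|x'|^k|y'|^k\bigr)$ is the same device as the paper's multiplicative chain bounding $(1+|x_n+x'\cdot y'|)^{-1}$ by $(1+|x'|^2)(1+|y'|^2)/(1+|x_n|)$; the only (harmless) difference is that you also record uniform boundedness of the $x'$-derivatives, which is slightly stronger than the temperedness the lemma asks for.
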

\begin{proof} The first statement is obvious, because we can differentiate under the sign of integration in (\ref{bart22}) infinitely many times. Furthermore,  for any positive integers $\ell$ and $m$, there is a constant $c_{\ell, m}$ such that
\[
|f(y', x_n + x'\cdot y')| \le \frac {c_{\ell, m}}{(1+|y'|)^\ell  \,  (1+|x_n + x'\cdot y'|)^m }.\]
Note that
\bea
&&\frac {1}{1+|x_n + x'\cdot y'|}=\frac {1+|x_n + x'\cdot y' - x'\cdot y'|}{(1+|x_n + x'\cdot y'|)(1+|x_n|)}\nonumber\\
&&{}\nonumber\\
&&\le\frac{1}{1+|x_n|}\Big(1+\frac{|x'\cdot y'|}{1+|x_n + x'\cdot y'|}\Big ) \le \frac{1+|x'\cdot y'|}{1+|x_n|}\nonumber\\
&&{}\nonumber\\
&&\le \frac{1+2|x'\cdot y'|}{1+|x_n|}\le \frac{1+|x'|^2 +|y'|^2}{1+|x_n|}\nonumber\\
&&{}\nonumber\\
&&\le  \frac{(1+|x'|^2)(1 +|y'|^2)}{1+|x_n|}.\nonumber\eea
Hence
\bea\label
|(Tf)(x', x_n)| &\le& c_{\ell, m} \,\frac{(1+|x'|^2)^m}{(1+|x_n|)^m} \intl_{\bbr^{n-1}} \frac{(1+|y'|^2)^m}{(1+|y'|)^\ell} \, dy'\nonumber\\
\label{pari} &\le&  \tilde c_{\ell, m} \,\frac{(1+|x'|^2)^m}{(1+|x_n|)^m}, \qquad \tilde c_{\ell, m}=\const,\eea
 if $\ell$ is  big enough in comparison with $m$. A similar estimate holds for any derivative of $Tf$. This gives other statements of the lemma.
\end{proof}

An analogue of Lemma \ref {from}  for the Radon transform $(Rf) (\theta, t)$ is more involved and needs additional definitions.
\begin{definition} \label{iiuuyg5a}
A function $g$ on $S^{n-1}$ is called differentiable
if the corresponding homogeneous function \[\tilde g(x)= g(x/|x|)\] is
differentiable in the usual sense on $\bbr^n \setminus \{0\}$. The
derivatives of a function $g$ on $S^{n-1}$ will be defined as
restrictions to $S^{n-1}$ of the  derivatives of
$\tilde g(x)$, namely,
\be\label{iiuuyg}
(\partial^\gam g)(\theta)=(\partial^\gam \tilde g)(x)\big|_
{x=\theta},\qquad \gam \in \bbz_+^n, \quad \theta\in S^{n-1}.
\ee
\end{definition}

\begin{definition} \label{iiuuyg5a1}
 We denote by $ S(Z_n)$ the  space of
functions $\vp(\theta,t)$ on $Z_n=S^{n-1}\times \bbr$, which are
infinitely differentiable in $\theta$ and $t$ and rapidly decreasing
as $t \to \pm\infty$ together with all derivatives.
The topology in $ S(Z_n)$ is defined by the sequence of norms
\be\label {jjyyz}
||\vp ||_m=\sup\limits_{|\gam|+j\le m} \sup\limits_{\theta,t} \
\!(1+|t|)^m |(\partial_\theta^\gam \partial^j_t \vp)(\theta,t)|, \quad
m\in \bbz_+.
\ee
The subspace of even functions in $ S(Z_n)$ will be denoted by $ S_e(Z_n)$.
\end{definition}

\begin{theorem} \label {mnub} \cite[Theorem 4.47]{Ru15}, \cite[Chapter I, Theorem 2.4]{H11} The Radon transform $f \to Rf$ is a linear continuous map from $ S(\bbr^n)$  into  $S_e(Z_n)$.
\end{theorem}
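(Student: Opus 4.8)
The plan is to reduce everything to one-dimensional Fourier inversion applied to the Fourier slice theorem. Linearity of $f\mapsto Rf$ is immediate from (\ref{rtra1}), and the evenness $ (Rf)(\theta,t)=(Rf)(-\theta,-t)$ is exactly (\ref{972sert}); so it suffices to prove that $Rf\in S(Z_n)$ together with continuity estimates $\|Rf\|_m\le c_m\sum_{k\le K(m)}\|f\|_k$ (seminorms as in (\ref{jjyyz}) and (\ref{setop})), which is the standard criterion for continuity of a linear map between these Fréchet spaces. As a first step I would record an integral representation. For $f\in S(\rn)\subset L^1(\rn)$ the slice theorem (\ref{prsl}) gives $(R_\theta f)^\wedge(\rho)=\hat f(\rho\theta)$; since $\hat f\in S(\rn)$ and $|\rho\theta|=|\rho|$, the map $\rho\mapsto\hat f(\rho\theta)$ is rapidly decreasing, hence in $L^1(\bbr)$, while $R_\theta f\in L^1(\bbr)$ because $|R_\theta f|\le R_\theta|f|$ and $\intl_\bbr R_\theta|f|\,dt=\|f\|_1$ by (\ref{vali}). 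One-dimensional Fourier inversion therefore holds pointwise and
\[ (Rf)(\theta,t)=\frac{1}{2\pi}\intl_\bbr \hat f(\rho\theta)\,e^{-it\rho}\,d\rho. \]

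Next I would establish smoothness and the action of derivatives by differentiating under the integral sign. The operator $\partial_t^j$ simply inserts a factor $(-i\rho)^j$. For the $\theta$-derivatives in the sense of Definition \ref{iiuuyg5a} I would extend $\theta\mapsto\hat f(\rho\theta)$ to the degree-zero homogeneous function $x\mapsto\hat f(\rho x/|x|)$ on $\rn\setminus\{0\}$ and apply the chain rule. Since $\partial_{x_i}(x_k/|x|)$ is smooth away from the origin and hence bounded on $\sn$, each application of $\partial_{x_i}$ either differentiates $\hat f$ — producing one factor $\rho$ and raising the order of the derivative of $\hat f$ — or differentiates one of the bounded coefficients. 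Thus $(\partial_\theta^\gam\partial_t^j Rf)(\theta,t)$ is a finite sum of integrals
\[ \intl_\bbr \rho^{\,|\beta|+j}\,b_\beta(\theta)\,(\partial^\beta\hat f)(\rho\theta)\,e^{-it\rho}\,d\rho,\qquad |\beta|\le|\gam|, \]
with $b_\beta$ smooth and bounded on $\sn$. Because $\hat f$ is Schwartz, $|(\partial^\beta\hat f)(\rho\theta)|\le c_{\beta,N}(1+|\rho|)^{-N}$ uniformly in $\theta$, so each integrand is dominated by an integrable function locally uniformly in $(\theta,t)$; this legitimizes the differentiations and shows $Rf\in C^\infty(Z_n)$.

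Finally, for the seminorm bound I would supply the weight $(1+|t|)^m\lesssim\sum_{l=0}^m|t|^l$ and treat each power separately. Using $t^l e^{-it\rho}=(i\partial_\rho)^l e^{-it\rho}$ and integrating by parts $l$ times in $\rho$ (the boundary terms vanishing by the rapid decay of $\hat f$) moves $\partial_\rho^l$ onto the remaining integrand, raising the $\rho$-polynomial to degree $\le|\gam|+j+l$ and the order of the derivative of $\hat f$ correspondingly. Choosing $N$ larger than this degree plus two makes every resulting $\rho$-integral converge, with a bound that is a finite sum of Schwartz seminorms of $\hat f$, and hence of $f$ by continuity of the Fourier transform on $S(\rn)$. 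This yields $\|Rf\|_m\le c_m\sum_{k\le K(m)}\|f\|_k$, which is the desired continuity.

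I expect the only genuinely delicate point to be the bookkeeping for the spherical derivatives: one must verify that differentiating the homogeneous extension $\hat f(\rho x/|x|)$ yields coefficients bounded on $\sn$ and powers of $\rho$ of controlled degree, so that the Schwartz decay of $\hat f$ still dominates after the integrations by parts in $\rho$. Everything else — the $L^1$ justification of inversion, differentiation under the integral, and the final assembly of seminorms — is routine.
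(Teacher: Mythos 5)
The paper offers no proof of Theorem \ref{mnub} at all --- it is quoted from \cite[Theorem 4.47]{Ru15} and \cite[Chapter I, Theorem 2.4]{H11} --- so your argument should be judged as a self-contained substitute for that citation, and as such it is correct; indeed it is essentially the classical Fourier-slice argument underlying the cited proofs. Your reduction of the $\theta$-derivatives to ordinary derivatives of the degree-zero homogeneous extension $x\mapsto\hat f(\rho x/|x|)$ is exactly what Definition \ref{iiuuyg5a} requires, and your key observation --- that differentiating $x/|x|$ produces coefficients that are smooth and homogeneous away from the origin, hence bounded on a neighborhood of $\sn$, while each differentiation of $\hat f$ costs one factor of $\rho$ --- is the right way to close the bookkeeping, since the Schwartz decay $|(\partial^\beta\hat f)(\rho\theta)|\le c_{\beta,N}(1+|\rho|)^{-N}$ is uniform in $\theta$ (because $|\rho\theta|=|\rho|$). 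Two slips, both harmless: (i) integration by parts in $\rho$ does not \emph{raise} the degree of the $\rho$-polynomial --- each $\partial_\rho$ either lowers that degree or adds one derivative to $\hat f$ with bounded coefficients $\theta_k$ --- so the degree stays $\le|\gam|+j$ and your estimate holds a fortiori; (ii) to assert the inversion formula
\[
(Rf)(\theta,t)=\frac{1}{2\pi}\intl_\bbr \hat f(\rho\theta)\,e^{-it\rho}\,d\rho
\]
pointwise \emph{everywhere} rather than a.e., you should note that $(R_\theta f)(t)$ is continuous in $t$ (dominated convergence in (\ref{rtra1}), immediate for $f\in S(\rn)$), so that two continuous functions agreeing a.e. agree identically. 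With these remarks your seminorm bound $\|Rf\|_m\le c_m\sum_{k\le K(m)}\|f\|_k$ goes through as written, and linearity plus the evenness (\ref{972sert}) finish the proof. What your route buys, compared with the paper's bare citation, is self-containedness: it uses only facts already recorded in the paper, namely (\ref{prsl}), (\ref{vali}), and Definitions \ref{iiuuyg5a}--\ref{iiuuyg5a1}.
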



\section {Radon-type fractional integrals $R_{+}^\a f$  and $T _{+}^\a f$}

In this section we
  consider   fractional integrals of the form
\bea\label{tongcNN} (R_{+}^\a f)(\theta, t))&=&\frac{1}{\Gam(\a)}\intl_{\bbr^n}\!\!
(t\!-\!x\cdot\theta)_{+}^{\a -1} f(x)\, dx,  \\
\label {99jh1}
(T_{+}^\a f)(x)&=& \frac{1}{\Gam (\a)}\intl_{\rn} (x_n -y_n)_{+}^{\a -1} f(y', y_n + x' \cdot y')\, dy,\\
\label {d99jh1}
(\stackrel{*}{T}\!{}_{+}^\a g)(x)&=& \frac{1}{\Gam (\a)}\intl_{\rn} (x_n -y_n)_{-}^{\a -1} g(y', y_n - x' \cdot y')\, dy.\eea
If $Re \,\a>0$ and  $f$ is good enough, these integrals are absolutely convergent. The limiting case $\a=0$ in (\ref{tongcNN}) and (\ref{99jh1}) yields $Rf$ and $Tf$, respectively. The limiting case $\a=0$ in  (\ref{d99jh1}) gives $T^*f$, as in (\ref{dbart}).
One can readily see that
\[\langle T_{+}^\a f, g \rangle = \langle f, \stackrel{*}{T}\!{}_{+}^\a g \rangle\]
 and
\be\label{tocd}
 (\stackrel{*}{T}\!{}_{+}^\a g)(x', x_n)= (T_{+}^\a \dot g)(x', -x_n), \qquad \dot g (y', y_n)= g (y', -y_n).\ee

Given two functions $\vp (x)\equiv\vp(x',x_n)$ and $\psi (\theta, t)$, denote
\be\label {lab1} (\Lam_\a\vp)(\theta,
t)=|\theta_n |^{\a-1}\vp \left (-\frac{\theta'}{\theta_n},\frac{t}{\theta_n}\right),
\ee
\be\label{lab2}  (\Lam_\a^{-1} \psi)(x)=(1+|x'|^2)^{(\a-1)/2} \psi \left
(\frac{x'-e_n}{\sqrt{1+|x'|^2}}, -\frac{x_n}{\sqrt{1+|x'|^2}}\right);\ee
cf. (\ref{tonv}), (\ref{tinv}) for $\a=0$.

\begin{lemma} \label {ordi}
The following relations hold:
\bea \label{4A5i}(R_{+}^\a f)(\theta, t)&=& (\Lam_\a T_{+}^\a f)(\theta, t), \qquad  \theta_n \neq 0;\\
\label{4A6v} (T_{+}^\a f)(x)&=&(\Lam_\a^{-1} R_{+}^\a f)(x).\eea
Moreover, for every $1\le p<\infty$ and a real number $\nu$,
\be\label {tae1}
\intl_{Z_n} |t|^{\nu p} |(R_{+}^\a f)(\theta, t)|^p dt d\theta= 2 \intl_{\bbr^{n}} \!\!\frac{|x_n|^{\nu p}\,  |(T_{+}^\a  f) (x)|^p }{(1\!+\!|x'|^2)^{(n+(\a -1+\nu)p  +1)/2}}\, dx.\ee
It is assumed  that either side of the corresponding equality exists in the Lebesgue sense.
\end{lemma}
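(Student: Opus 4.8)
The plan is to deduce all three assertions from their $\a=0$ counterparts in (\ref{45})--(\ref{tae}) by factoring each fractional Radon transform through the one--dimensional Riemann--Liouville integral $\ia$ of (\ref{t2.8}) acting in the last variable. Writing $s=z\cdot\theta$ and integrating (\ref{tongcNN}) first over the hyperplane $\{z:z\cdot\theta=s\}$, Fubini's theorem gives
\be\label{fac1} (R_{+}^\a f)(\theta,t)=(\ia R_\theta f)(t),\ee
and slicing (\ref{99jh1}) in the $y_n$--variable gives
\be\label{fac2} (T_{+}^\a f)(x',x_n)=(\ia T_{x'}f)(x_n).\ee
Thus the only genuinely new point beyond the case $\a=0$ is the interaction of $\ia$ with the rescaling $s\mapsto s/\theta_n$ of the last variable that is built into $\Lam_\a$.

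I would first record the homogeneity of $\ia$: a change of variable in (\ref{t2.8}) shows that for $\lambda>0$ one has $(\ia[\,g(\cdot/\lambda)])(t)=\lambda^{\a}(\ia g)(t/\lambda)$. Combining this with the $\a=0$ identity (\ref{45}), which reads $(R_\theta f)(s)=|\theta_n|^{-1}(T_{-\theta'/\theta_n}f)(s/\theta_n)$, and feeding the result into (\ref{fac1})--(\ref{fac2}) together with the definition (\ref{lab1}) of $\Lam_\a$, yields (\ref{4A5i}) when $\theta_n>0$. The companion relation (\ref{4A6v}) then follows by applying $\Lam_\a^{-1}$, since (\ref{lab1})--(\ref{lab2}) are mutually inverse; this is a one--line check using $\theta_n=-1/\sqrt{1+|x'|^2}$, which also collapses the prefactors $(1+|x'|^2)^{(\a-1)/2}$ and $|\theta_n|^{\a-1}$ against each other.

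The hard part will be the sign of $\theta_n$. The dilation $s\mapsto s/\theta_n$ preserves orientation only for $\theta_n>0$; for $\theta_n<0$ it reverses orientation and turns the left--sided integral $\ia$ into the right--sided integral $I_-^\a$. Concretely, writing $t-z\cdot\theta=\theta_n A$ with $A=(t-\theta'\cdot z')/\theta_n-z_n$, the weight $(t-z\cdot\theta)_+^{\a-1}=(\theta_n A)_+^{\a-1}$ equals $|\theta_n|^{\a-1}(A)_+^{\a-1}$ when $\theta_n>0$ but $|\theta_n|^{\a-1}(A)_-^{\a-1}$ when $\theta_n<0$, so for $\theta_n<0$ the transversal operator appearing on the right is not $T_{+}^\a$ but its $(\cdot)_-$--analogue $T_{-}^\a$. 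This discrepancy is invisible at $\a=0$, where $(\cdot)_\pm^{\a-1}$ both reduce to point masses and $T_{+}^\a=T_{-}^\a=T$, but it must be tracked here. I would isolate it through the reflection identity $(T_{-}^\a f)(x',x_n)=(T_{+}^\a\tilde f)(x',-x_n)$, with $\tilde f(y)=f(-y)$, which follows from $(X)_-^{\a-1}=(-X)_+^{\a-1}$ and the substitution $y\mapsto -y$ in (\ref{99jh1}).

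Finally, for the weighted identity (\ref{tae1}) I would integrate $|R_{+}^\a f|^p$ over $Z_n$ and split $S^{n-1}$ into its two hemispheres via (\ref{teq1}). The chart $\theta=(x'+e_n)/|x'+e_n|$, on which $\theta_n>0$, contributes a weighted integral of $|T_{+}^\a f|^p$, while the chart $\theta=(x'-e_n)/|x'-e_n|$, on which $\theta_n<0$, contributes the corresponding weighted integral of $|T_{-}^\a f|^p$; the change of variables $x_n=\pm t\sqrt{1+|x'|^2}$ is exactly the one used for (\ref{tae}) and produces the exponent $(n+(\a-1+\nu)p+1)/2$. The crux is then to reconcile these two contributions with the stated factor--$2$ form carrying only $T_{+}^\a$: I would use the reflection identity above together with the evenness of the weight $|x_n|^{\nu p}(1+|x'|^2)^{-(\cdots)/2}$ in $x_n$ and in $x'$ to match the two hemispheres. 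This is the step I expect to require the most care, precisely because, unlike at $\a=0$, the two hemispheres honestly carry the two different one--sided fractional integrals, and at the level of operator norms one uses $\|T_{-}^\a f\|=\|T_{+}^\a\tilde f\|$ with $\|\tilde f\|_p=\|f\|_p$.
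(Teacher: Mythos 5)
Your route is genuinely different from the paper's. The paper proves (\ref{4A6v}) by one direct change of variables, namely (\ref{change1}) inside the defining integral (\ref{tongcNN}), and then obtains (\ref{4A5i}) and (\ref{tae1}) as consequences; you instead factor through the one-dimensional operators (\ref{tre2.3}), (\ref{ted2.8}) and combine the homogeneity of $I_+^\a$ with the $\a=0$ identity (\ref{45}). Your hemisphere analysis is correct, and it is in fact \emph{sharper} than the paper's own argument: for $\theta_n>0$ one indeed gets $R_+^\a f=\Lam_\a T_+^\a f$, while for $\theta_n<0$ the dilation $s\mapsto s/\theta_n$ reverses orientation and produces $T_-^\a f:=I_-^\a T_{x'}f$, so that $R_+^\a f=\Lam_\a T_-^\a f$ on that hemisphere. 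The paper does not see this because of a sign slip: under (\ref{change1}) one has $\theta_n=-(1+|x'|^2)^{-1/2}<0$ and $t-y\cdot\theta=(y_n-x_n-x'\cdot y')(1+|x'|^2)^{-1/2}$, the negative of the expression written there; carried through correctly, the paper's own computation yields $T_-^\a f$ on its chart. Note also an internal inconsistency in your second paragraph: you deduce (\ref{4A6v}) ``by applying $\Lam_\a^{-1}$,'' but (\ref{lab2}) is built on precisely the $\theta_n<0$ chart, so, consistently with your own observation, what comes out is $\Lam_\a^{-1}R_+^\a f=T_-^\a f$, not (\ref{4A6v}).

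The genuine gap is your last step, and it cannot be repaired, because (\ref{tae1}) as printed is false for general $f$. Splitting $S^{n-1}$ by (\ref{teq1}) and changing variables as in the proof of (\ref{tae}), the hemisphere $\theta_n>0$ contributes the weighted integral of $|T_+^\a f|^p$ and the hemisphere $\theta_n<0$ that of $|T_-^\a f|^p$; that is, what your argument (and the paper's, done with the correct sign) actually proves is
\[
\intl_{Z_n}|t|^{\nu p}\,|(R_+^\a f)(\theta,t)|^p\,d\theta\,dt
=\intl_{\bbr^n}\frac{|x_n|^{\nu p}\,\bigl(|(T_+^\a f)(x)|^p+|(T_-^\a f)(x)|^p\bigr)}{(1+|x'|^2)^{(n+(\a-1+\nu)p+1)/2}}\,dx .
\]
Your reflection identity (which is correct) converts the second term into the weighted norm of $T_+^\a\tilde f$, $\tilde f(y)=f(-y)$ --- but that is the norm of $T_+^\a\tilde f$, not of $T_+^\a f$, and the two differ in general: by (\ref{simil}), $I_-^\a$ is $I_+^\a$ composed with the Fourier multiplier $e^{-i\pi\a\,\sgn\xi}=\cos\pi\a - i\sin(\pi\a)\sgn\xi$, a Hilbert-transform-type phase that is not an isometry of weighted $L^p$ for $p\neq2$ and non-integer $\a$ (already on the line, $\|I_+^{1/2}g\|_1\neq\|I_-^{1/2}g\|_1$ for $g$ a smoothed version of $\delta_0+\delta_1-2\delta_2$). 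So the factor-$2$ form carrying only $T_+^\a$ holds only in special cases, e.g. for even $f$ or at $\a=0$, where $T_+^0=T_-^0=T$ --- which is exactly why (\ref{tae}) is unproblematic. The norm-level facts you invoke at the end, $\|T_-^\a f\|=\|T_+^\a\tilde f\|$ and $\|\tilde f\|_p=\|f\|_p$, are precisely what salvages every later application of this lemma in the paper (Theorems \ref{o9iu2}, \ref{lanseTR}, \ref{OST2a} use only one-sided bounds), but they do not give the exact identity; the honest conclusion of your approach is that the lemma should be restated in the two-term form above.
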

 \begin{proof} By (\ref{change1}),
\bea(R_{+}^\a f)(\theta, t)&=&  \frac{1}{\Gam (\a)} \intl_{\rn}  (t- y\cdot \theta )_{+}^{\a -1}  f(y)\, dy\nonumber\\
 &=& \frac{1}{\Gam (\a)} \intl_{\rn}   \left (\frac{x_n + x' \cdot y' -y_n}{\sqrt{1+|x'|^2}}\right )_{\!+}^{\a -1}  f(y)\, dy\nonumber\\
&=&\frac{(1+|x'|^2)^{(1-\a)/2}}{\Gam (\a)} \intl_{\rn} (x_n -y_n )_{+}^{\a -1} f (y', y_n + x' \cdot y')\, dy\nonumber\\
&=&(1+|x'|^2)^{(1-\a)/2} (T_{+}^\a f)(x).\nonumber\eea
This gives (\ref{4A6v}).  The equality (\ref{4A5i}) is a consequence of (\ref{4A6v}). The proof of (\ref{tae1}) is an almost verbatim copy of the  proof of (\ref{tae}).
\end{proof}

\subsection {Fractional integrals of $L^p$-functions}
 Consider the weighted space
  \[
L^p_\nu (Z_n)=\Big \{ \vp: ||\vp||^\sim_{p,\nu}\equiv \Big (\intl_{Z_n} |t|^{\nu p}\,  |\vp (\theta, t)|^p \, d\theta dt \Big )^{1/p}<\infty \Big \},\]
where $Z_n= S^{n-1} \times \bbr$, $1\le p <\infty$, and  $\nu$ is a real number. Equivalently,
\be\label{zzxsy}
||\vp||^\sim_{p,\nu}\!=\! \Bigg ( \sig_{n-1}\intl_{-\infty}^\infty \!\!|t|^{\nu p}\,dt\!\intl_{O(n)} \!|\vp (\gam e_1, t)
|^p\,d\gam\Bigg )^{1/p}.\ee

\begin{theorem}\label {o9iuN}  Let  $\,0<\a <1$,
\be\label{Nop98yoN} 1\le p < \frac{n}{n-1+\a}, \qquad \nu= -\a-\frac {n-1}{p'}, \ee
$1/p +1/p' =1$. Then
\be\label{ortN}
||R_{+}^\a f||^\sim_{p,\nu}\le c_\a \,||f||_{p},\qquad c =c\,(\a,n,p)=\const.\ee
The conditions  (\ref{Nop98yoN}) are sharp. There is a function $\tilde f \in L^p(\bbr^n)$ with $p \ge n/(n-1+\a)$, such that $(R_{+}^\a \tilde f)(x) \equiv \infty$.
\end{theorem}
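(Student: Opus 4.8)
The plan is to factor $R_{+}^\a$ through the ordinary Radon transform $R$ and a one–dimensional fractional integration in the last variable, and then to combine the estimate of Theorem \ref{o9iur} with a weighted Hardy–type inequality for the Riemann–Liouville operator $I^\a_+$. Comparing (\ref{tongcNN}) with (\ref{t2.8}) and (\ref{rtra1}), for each fixed $\theta \in S^{n-1}$ one has $(R_{+}^\a f)(\theta, \cdot) = I^\a_+\big[(Rf)(\theta, \cdot)\big]$, the fractional integral acting in $t$. Setting $\nu_0 = -(n-1)/p'$ (the weight of Theorem \ref{o9iur}) and noting $\nu = \nu_0 - \a$, everything reduces to the one–variable inequality
\be\label{onedim}
\Big(\intl_{\bbr} |t|^{\nu p}\, |(I^\a_+\om)(t)|^p\, dt\Big)^{1/p} \le c \Big(\intl_{\bbr} |t|^{\nu_0 p}\, |\om(t)|^p\, dt\Big)^{1/p},
\ee
applied with $\om = (Rf)(\theta, \cdot)$: raising to the $p$-th power and integrating in $\theta$, the left side becomes $(||R_{+}^\a f||^\sim_{p,\nu})^p$, while Theorem \ref{o9iur} bounds the right side by $c\,A^p\,||f||_p^p$. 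For $f\in S(\rn)$ the factorization is legitimate since $Rf\in S_e(Z_n)$ by Theorem \ref{mnub}, and the general case follows by density once (\ref{onedim}) is in hand.

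The heart of the matter is thus (\ref{onedim}). Writing $\om(\tau)=|\tau|^{-\nu_0}h(\tau)$ and $g(t)=|t|^{\nu}(I^\a_+\om)(t)$, the inequality becomes $||g||_{L^p(\bbr)}\le c\,||h||_{L^p(\bbr)}$ for the integral operator with nonnegative kernel
\[
\widetilde K(t,\tau)=\frac{1}{\Gam(\a)}\,|t|^{\nu}\,(t-\tau)_{+}^{\a-1}\,|\tau|^{-\nu_0},
\]
which is homogeneous of degree $-1$ under $(t,\tau)\mapsto(\lambda t,\lambda\tau)$, $\lambda>0$, since $\nu+(\a-1)-\nu_0=-1$. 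Splitting $\bbr^2$ by the signs of $t$ and $\tau$ (the block $t<0<\tau$ is empty because the kernel forces $\tau<t$), each of the three remaining blocks becomes, after the evident change of variable, an operator with a degree $(-1)$ homogeneous kernel on $(0,\infty)$, to which the Hardy–Littlewood–Pólya test applies: $L^p$-boundedness follows from finiteness of $\int_0^\infty \widetilde K(1,\cdot)\,s^{-1/p}\,ds$. Near the diagonal this integrand carries a factor $(1\mp s)^{\a-1}$, integrable precisely because $\a>0$; at the far end $s\to\infty$ (that is, $\tau\to-\infty$) it behaves like $s^{\a-1-\nu_0-1/p}$, integrable if and only if $\a<\nu_0+1/p=1-n/p'$, i.e. if and only if $p<n/(n-1+\a)$. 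This is exactly where the admissible range (\ref{Nop98yoN}) is forced, and the constant $c$ is independent of $\theta$.

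For sharpness, the value $\nu=-\a-(n-1)/p'$ is necessary by dilation invariance: applying (\ref{ortN}) to $f_\lambda=f(\lambda\,\cdot)$, $\lambda>0$, a change of variable gives $(R_{+}^\a f_\lambda)(\theta,t)=\lambda^{1-n-\a}(R_{+}^\a f)(\theta,\lambda t)$, and the powers of $\lambda$ on the two sides match only for this $\nu$ (cf. the remark after Theorem \ref{o9iur}). To see that $p<n/(n-1+\a)$ cannot be relaxed, take the nonnegative radial function
\[
\tilde f(x)=(2+|x|)^{-(n-1+\a)}\,\big(\log(2+|x|)\big)^{-1}\ \in L^p(\rn)\quad\text{for every } p\ge n/(n-1+\a).
\]
By the radial formula (\ref{rese}) and the substitution $r=|s|u$, one finds $(R\tilde f)(\theta,s)\sim c\,|s|^{-\a}(\log|s|)^{-1}$ as $|s|\to\infty$, the $u$–integral converging thanks to $\a>0$. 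Hence in
\[
(R_{+}^\a \tilde f)(\theta,t)=\frac{1}{\Gam(\a)}\intl_{-\infty}^{t}(t-s)^{\a-1}(R\tilde f)(\theta,s)\,ds
\]
the integrand is $\sim |s|^{\a-1}\,|s|^{-\a}(\log|s|)^{-1}=|s|^{-1}(\log|s|)^{-1}$ as $s\to-\infty$, so the integral diverges for every $(\theta,t)$; that is, $R_{+}^\a\tilde f\equiv\infty$. This is the exact analogue of Remark \ref{kaz}.

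I expect the main obstacle to be the verification of (\ref{onedim}): one must decompose by sign blocks, apply the homogeneous–kernel Schur test correctly on each, and track the two endpoint conditions so as to isolate $p<n/(n-1+\a)$ as the binding one, the role of $\a>0$ being to tame the diagonal singularity. It is reassuring that the divergent contribution $\tau\to-\infty$ in the Schur integral is precisely the mechanism producing the counterexample, so the two halves of the theorem meet at the same endpoint. A secondary, routine point is the passage from the a priori estimate on $S(\rn)$ to all of $L^p(\rn)$.
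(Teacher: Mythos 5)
Your proposal is correct in substance for $1<p<n/(n-1+\a)$, but it takes a genuinely different route from the paper. The paper argues directly: for $t>0$ it substitutes $x=t\gam y$, $\theta=\gam e_1$ in (\ref{tongcNN}), pulls out the factor $t^{\a+n-1}$, and applies Minkowski's integral inequality, so that the whole estimate reduces to finiteness of the explicit constants $\sig_{n-1}^{-1/p}\int_{\rn}(1\mp y_1)_{+}^{\a-1}|y|^{-n/p}\,dy$ (the case $t<0$ being symmetric). You instead exploit the factorization $R_{+}^{\a}f=I^{\a}_{+}R_{\theta}f$ of (\ref{tre2.3}), reduce to a one-dimensional weighted inequality for $I^{\a}_{+}$ between the weights $|t|^{\nu_0}$, $\nu_0=-(n-1)/p'$, and $|t|^{\nu}$, $\nu=\nu_0-\a$, prove that inequality by the Hardy--Littlewood--P\'olya homogeneous-kernel test, and then quote Theorem \ref{o9iur}. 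Your homogeneity count $\nu+(\a-1)-\nu_0=-1$ and the identification of $p<n/(n-1+\a)$ with the $s\to\infty$ endpoint are correct. Structurally your proof is more modular: it isolates fractional integration as a one-variable phenomenon and reuses the weighted bound already established for $R$, whereas the paper's proof is self-contained and yields explicit constants; at bottom both rest on Minkowski's inequality for a positive homogeneous kernel, exploited via dilations of $\rn$ in the paper and dilations of the line in your argument. (One small simplification: the closing ``density'' step is unnecessary --- since all kernels are nonnegative, run the whole chain with $|f|$ and use Tonelli; this legitimizes the factorization for every $f\in L^p$ directly.) Your sharpness half matches the paper's in spirit: the dilation argument for $\nu$ is identical, and your counterexample $(2+|x|)^{-(n-1+\a)}(\log(2+|x|))^{-1}$ is a clean variant of the paper's $(2+|x|)^{-n/p}(\log(2+|x|))^{-1}$; your asymptotics for $R\tilde f$ via (\ref{rese}) are correct, avoid the paper's separate treatment of $n\ge 3$ and $n=2$, and give a single function valid for all $p\ge n/(n-1+\a)$ at once.

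There is, however, one concrete omission: on the blocks $0<\tau<t$ and $\tau<0<t$ your Schur integrand also has the endpoint $s\to 0^{+}$, where it behaves like $s^{-\nu_0-1/p}$; finiteness there requires $\nu_0+1/p<1$, i.e.\ $n/p'>0$, i.e.\ $p>1$. This is not a removable technicality. At $p=1$ (so $\nu=-\a$) your one-dimensional inequality is false: for $\om\ge 0$, Tonelli produces the inner integral $\int_{s}^{\infty}|t|^{-\a}(t-s)^{\a-1}\,dt$, which diverges like $\int^{\infty}t^{-1}\,dt$. Consequently (\ref{ortN}) itself fails at $p=1$: if $f\ge 0$ is supported in $\{|x|\le M\}$ and $\|f\|_1>0$, then $(R_{+}^{\a}f)(\theta,t)\ge \Gam(\a)^{-1}(t+M)^{\a-1}\|f\|_1$ for $t>M$, so $\int |t|^{-\a}|(R_{+}^{\a}f)(\theta,t)|\,dt=\infty$. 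So your method demonstrably requires $1<p$, and you should say so. You are in good company: the paper's own constants $c_1,c_2$ also diverge when $p=1$ (the factor $|y|^{-n/p}=|y|^{-n}$ is not locally integrable at the origin), so its claim that they are ``easily checked to be finite,'' and hence the left endpoint $1\le p$ in (\ref{Nop98yoN}), suffers from exactly the same defect; in both your argument and the paper's, the correct range is $1<p<n/(n-1+\a)$.
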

\begin{proof}  Denote $\vp (\theta, t)=(R_{+}^\a f)(\theta, t)$
  and consider the cases $t>0$ and $t<0$ separately.
  For $t>0$, changing variables $\theta=\gam e_1$, $\gam \in O(n)$, and $x=t\gam y$, we get
\be\label{tongyi} \vp (\gam e_1, t)=\frac{t^{\a+n-1}}{\Gam (\a)}\intl_{\bbr^n}
f(t\gam y)\, (1-y_1)_{+}^{\a -1} \, dy. \ee
For $t<0$, we similarly have
\be\label{tongyiT} \vp (\gam e_1, t)=\frac{|t|^{\a+n-1}}{\Gam (\a)}\intl_{\bbr^n}
f(|t|\gam y)\, (-1-y_1)_{+}^{\a -1} \, dy. \ee
Splitting the integral $\int_{-\infty}^\infty$ in  (\ref{zzxsy}) in two pieces,  we  write
\[||\vp||^\sim_{p,\nu}= \frac{\sig_{n-1}^{1/p}}{\Gam (\a)} \,(I_1 +I_2)^{1/p},\]
 where
\bea
I_1^{1/p}&\le& \intl_{\bbr^n} (1\!-\!y_1)_{+}^{\a -1}
\Bigg (\,\intl_0^\infty \intl_{O(n)} \!t^{(\a+n-1+\nu) p}\,|f (t\gam y)|^p \,d\gam dt \Bigg )^{1/p} \!\!\!dy\nonumber\\
&=&c_1 \, ||f||_{p},\qquad  c_1=\sig_{n-1}^{-1/p} \intl_{\bbr^n} (1\!-\!y_1)_{+}^{\a -1} |y|^{-n/p}\, dy\nonumber \eea
(here we have used the equality $\nu=-\a-(n-1)/p'$). Similarly,
\bea
I_2^{1/p}&\le& \intl_{\bbr^n} (-1\!-\!y_1)_{+}^{\a -1}
\Bigg (\,\intl_{-\infty}^0 \intl_{O(n)} |t|^{(\a+n-1+\nu) p}\,|f (|t|\gam y)|^p \,d\gam dt \Bigg )^{1/p} \!\!\!dy\nonumber\\
&=&c_2 \, ||f||_{p},\qquad  c_2=\sig_{n-1}^{-1/p} \intl_{\bbr^n} (-1\!-\!y_1)_{+}^{\a -1} |y|^{-n/p}\, dy.\nonumber \eea
One can easily check that  $c_1$ and $c_2$ are finite. Hence (\ref{ortN}) follows.

 The necessity of the equality $\nu=-\a-(n-1)/p'$ can be proved using the  scaling argument. Specifically, let $f_\lam (x)=f(\lam x)$, $\lam>0$. Then
\[ ||f_\lam ||_{p}=\lam^{-n/p}\,||f||_{p}, \qquad ||R_+^\a f_\lam ||^\sim_{p,\nu}=\lam^{-\nu-n-\a+1/p'}\,||R_+^\a f||^\sim_{p,\nu}.\]
Hence, if $||R_+^\a f||^\sim_{p,\nu}\le c \,||f||_{p}$ with $c$ independent of $f$, then, necessarily,
$\nu=-\a-(n-1)/p'$.

 Let us show  the necessity of the bound  $p< n/(n-1+\a)$. Consider, for example,
\[ \tilde f(x)=(2+|x|)^{-n/p} (\log (2+|x|))^{-1} \quad (\in
L^p(\bbr^n)).\]
This function is radial. We set $\tilde f(x)\equiv f_0 (|x|)$. Hence, by (\ref{rese}),
\bea
(R_{+}^\a \tilde f)(\theta, t)&=&\frac{1}{\Gam(\a)}\intl_{-\infty}^t (t-s)^{\a -1} (R \tilde f)(\theta,s) ds\nonumber\\
&=&\frac{\sigma_{n-2}}{\Gam(\a)}\intl_{-\infty}^t (t-s)^{\a -1} ds \intl^\infty_{|s|}\! f_0 (r)(r^2-s^2)^{(n-3)/2}r dr.\nonumber\eea
Suppose $t<0$ and change variables $t=-\t$, $s=-\z$. We obtain
\bea
&&(R_{+}^\a \tilde f)(\theta, -\t)\!=\!\frac{\sigma_{n-2}}{\Gam(\a)}\intl_\t^\infty (\z\!-\!\t)^{\a -1} d\z \intl^\infty_{\z}\! f_0 (r)(r^2\!-\!\z^2)^{(n-3)/2}r dr\nonumber\\
&&=\!\frac{\sigma_{n-2}}{\Gam(\a)} \intl^\infty_{\t}\!\! f_0 (r) k(r,\t) dr, \quad k(r,\t)\!=\! r\!\intl_\t^r \!(\z\!-\!\t)^{\a -1} (r^2\!-\!\z^2)^{(n-3)/2} d\z. \nonumber\eea

If $n\ge 3$, then $(r^2-\z^2)^{(n-3)/2} \ge (r-\z)^{(n-3)/2} r^{(n-3)/2}$,
\[k(r,\t) \ge c\, (r-\t)^{(n-3)/2 +\a} r^{(n-1)/2},\] and
\[
(R_{+}^\a \tilde f)(\theta, -\t)\ge c\,  \intl^\infty_{\t}\!\! \frac{(r\!-\!\t)^{(n-3)/2 +\a}\,  r^{(n-1)/2}}{(2+r)^{n/p} \,\log (2+r)}\, dr \equiv \infty\]
if $p\ge n/(n-1+\a)$. Recall that the constant $c$ may be different at each occurrence. If $n=2$, then
\[(r^2\!-\!\z^2)^{-1/2} \ge (r\!-\!\z)^{-1/2} (2 r)^{-1/2} ,  \quad k(r,\t) \ge c\, (r\!-\!\t)^{\a-1/2} r^{1/2},\] and the conclusion is the same.
\end{proof}

 \begin{theorem}\label {o9iu2}   Let  $\,0<\a <1$,
\be\label{op98yoN} 1\le p < \frac{n}{n-1+\a}, \quad \mu = -n \left(1-\frac{2}{p}\right), \quad \nu= -\a-\frac {n-1}{p'}, \ee
$1/p +1/p' =1$.
We define
\[
u(x) = \frac {|x_n|^\nu}{(1+|x'|^2)^{\mu/2}},  \]
\be\label {space2} L^p_u(\bbr^n)=\{ f: ||f||_{p,u}\equiv || u(x) f||_p <\infty \}.\ee
Then
\[
||T_{+}^\a f||_{p,u}\le c \,||f||_{p},\qquad c =c\,(\a,n,p)=\const.\]
The assumptions (\ref{op98yoN}) are sharp.  There exists a function $\tilde f \in L^p(\bbr^n)$ with $p \ge n/(n-1+\a)$, such that $(T_{+}^\a \tilde f)(x) \equiv \infty$.
\end{theorem}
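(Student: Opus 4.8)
The plan is to deduce this theorem from its ``$R$-version'', Theorem~\ref{o9iuN}, by means of the exact identity (\ref{tae1}) of Lemma~\ref{ordi}, exactly as Theorem~\ref{o9iu2t} was obtained from Theorem~\ref{o9iur} in the case $\a=0$. First I would set $\nu=-\a-(n-1)/p'$ in (\ref{tae1}). The only computation required is to check that, for this value of $\nu$, the exponent $(n+(\a-1+\nu)p+1)/2$ on the right-hand side of (\ref{tae1}) equals $\mu p/2$ with $\mu=-n(1-2/p)$. Using $p/p'=p-1$ one gets $n+(\a-1+\nu)p+1=n(2-p)$, so this exponent is $n(2-p)/2=\mu p/2$. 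Hence the right-hand side of (\ref{tae1}) is precisely $2\,||T_{+}^\a f||_{p,u}^{\,p}$, while the left-hand side is $\big(||R_{+}^\a f||^\sim_{p,\nu}\big)^p$. Theorem~\ref{o9iuN} bounds the latter by $c^{\,p}||f||_p^{\,p}$ under the conditions on $p$ and $\nu$ in (\ref{op98yoN}), which are exactly the hypotheses (\ref{Nop98yoN}); this gives $||T_{+}^\a f||_{p,u}\le 2^{-1/p}c\,||f||_p$ and proves the inequality.

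For the sharpness of the range $p<n/(n-1+\a)$ I would reuse the radial counterexample constructed in the proof of Theorem~\ref{o9iuN}, namely $\tilde f(x)=(2+|x|)^{-n/p}(\log(2+|x|))^{-1}\in L^p(\bbr^n)$ for $p\ge n/(n-1+\a)$, for which it was shown there that $(R_{+}^\a\tilde f)(\theta,t)\equiv\infty$. Since (\ref{4A6v}) gives $T_{+}^\a\tilde f=\Lam_\a^{-1}R_{+}^\a\tilde f$ and, by (\ref{lab2}), $\Lam_\a^{-1}$ is merely multiplication by the strictly positive finite factor $(1+|x'|^2)^{(\a-1)/2}$, it follows that $T_{+}^\a\tilde f\equiv\infty$ as well. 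This is the verbatim analogue of Remark~\ref{kaz}.

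It remains to argue that the weight exponents are forced. The exponent $\nu$ I would pin down by the scaling argument used in Theorem~\ref{o9iuN}: with $f_\lambda(x)=f(\lambda x)$, a direct computation from (\ref{99jh1}) yields $(T_{+}^\a f_\lambda)(x',x_n)=\lambda^{1-\a-n}(T_{+}^\a f)(x',\lambda x_n)$, whence $||T_{+}^\a f_\lambda||_{p,u}=\lambda^{\,1-\a-n-1/p-\nu}||T_{+}^\a f||_{p,u}$ and $||f_\lambda||_p=\lambda^{-n/p}||f||_p$; matching the powers of $\lambda$ forces $\nu=-\a-(n-1)/p'$. Note that this isotropic dilation does not rescale the slope variable $x'$ in the output, so it constrains only $\nu$ and leaves $\mu$ untouched. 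The exponent $\mu$ is where the main difficulty lies: a naive homogeneous scaling of the slope $x'$ would suggest the value $(n-1)(2-p)/p$, which is \emph{wrong}, the discrepancy stemming from the inhomogeneity of the weight $(1+|x'|^2)$ and, ultimately, from the Jacobian factor $\sqrt{1+|x'|^2}$ relating $T$ and $R$ in (\ref{con1}). The correct sharp value $\mu=-n(1-2/p)$ is instead dictated by the exact identity (\ref{tae1}): once $\nu$ is fixed, the power of $(1+|x'|^2)$ that makes the weighted $T$-norm coincide with the sharp weighted $R$-norm is uniquely determined, and decreasing it to some $\mu'<\mu$ inserts a factor $(1+|x'|^2)^{(\mu-\mu')p/2}\to\infty$ that destroys the estimate. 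The hard part will be to carry out this last point rigorously, i.e.\ to exhibit an explicit $f\in L^p(\bbr^n)$ for which $||T_{+}^\a f||_{p,u'}=\infty$ when $\mu'<\mu$; I expect this to be the only genuinely technical ingredient of the proof.
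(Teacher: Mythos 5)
Your proposal is correct and is, in substance, the paper's own proof: the paper's entire argument reads ``Set $\nu=-\a-(n-1)/p'$ in (\ref{tae1}) and make use of Theorem \ref{o9iuN}, combined with Lemma \ref{ordi}'', and your exponent computation $n+(\a-1+\nu)p+1=n(2-p)=\mu p$ is exactly the verification behind that one-liner, while pushing the radial counterexample of Theorem \ref{o9iuN} through (\ref{4A6v}) is exactly how the final sentence of the statement is obtained (cf.\ Remark \ref{kaz} and the analogous pair Theorem \ref{o9iur}/Theorem \ref{o9iu2t}). The one place you diverge is your last paragraph: you read ``the assumptions (\ref{op98yoN}) are sharp'' as also demanding a proof that the exponent $\mu$ cannot be lowered, and you flag that as an unresolved technical difficulty. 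The paper makes no such claim and contains no such argument: its sharpness assertion is precisely the counterexample sentence that follows it, and even the necessity of $\nu$ is established by scaling only at the level of the $R$-theorem (Theorem \ref{o9iuN}), not separately for $T_{+}^\a$. So the ``hard part'' you postpone is an over-reading of the statement rather than a gap; everything the paper proves, you have proved, by the same route.
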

\begin{proof}
 Set $\nu=-\a-(n-1)/p'$ in (\ref{tae1}) and make use of Theorem \ref {o9iuN}, combined with Lemma \ref{ordi}.
\end{proof}

\subsection {Fractional integrals of smooth functions}
 It is convenient to write
  $T_{+}^\a f$ and $\stackrel{*}{T}\!{}_{+}^\a g$ in the form
\[ (T_{+}^\a f)(x', x_n)\!=\! (h^\a_{+} \ast T_{x'}f)(x_n),\quad (\stackrel{*}{T}\!{}_{+}^\a g)(x', x_n)\!=\! (h^\a_{-} \ast  T^*_{x'}g)(x_n),\]
or
\be\label {ted2.8}  (T_{+}^\a f)(x', x_n)\!=\! (I^\a_+ T_{x'} f)(x_n), \quad (\stackrel{*}{T}\!{}_{+}^\a g)(x', x_n)\!=\! (I^\a_-  T^*_{x'}g)(x_n);\ee
cf. (\ref{desds2}), (\ref{t2.8}).
  Similarly,
\be\label {tre2.3}
(R_{+}^\a  f)(\theta, t)= (I^\a_+ R_{\theta} f)(t).\ee
The following equalities can be easily checked for functions $\om \in S(\bbr)$:
\be\label {oqa}
I^\a_\pm I^\b_\pm\omega = I^{\a+\b}_\pm \om, \qquad Re \, \a>0, \quad Re \, \b >0; \ee
\be\label {oqa1}
(\pm d/dt)^{\ell} (I^{\a+\ell}_\pm \omega)(t)=  (I^{\a}_\pm \omega)(t), \qquad  Re \, \a>0, \quad \ell \in \bbn;\ee
\be\label {oqa2}
 (I^{\a}_\pm \omega)(t) = (\pm 1)^\ell (I^{\a+\ell}_\pm \om^{(\ell)}(t), \qquad   Re \, \a> 0, \quad \ell \in \bbn,\ee
 where $ \om^{(\ell)}(t)= (d/dt)^{\ell}\om (t)$.
  The equality (\ref{oqa2}) can be used for analytic continuation of $I^{\a}_\pm \om$ to the domain $Re \, \a> -\ell$. It follows that,
 $I^\a_\pm \omega$ extend to  $\a \in \bbc $ as entire functions of $\a$ with preservation of the properties (\ref{oqa})-(\ref{oqa2}).

These facts extend to the operators $T_{+}^{\a}$,   $\stackrel{*}{T}\!{}_{+}^\a$, and $R_{+}^{\a}$.
\begin{lemma} \label {sideT} Let  $f \!\in S (\rn)$.  ${}$\hfill

\noindent {\rm (i)} For each $x \in \rn$, $(T_{+}^{\,\a} f)(x)$
 extends as an entire function of $\a$, so that
\be\label {Dacz} \lim\limits_{\a \to 0} (T_{+}^{\,\a} f)(x) =(Tf)(x),\ee
where $(T f)(x)$ is the transversal Radon transform (\ref{bart}).

\noindent {\rm (ii)} For $Re \, \a>0$ and $ \ell \in \bbn$ the following relations hold:
\be\label {oqa1rt}
(d/dx_n)^{\ell} (T_{+}^{\a+\ell} f)(x)=  (T_{+}^{\a} f)(x),\ee
\be\label {oqa2rt}
 (T_{+}^{\a}f)(x) =  (T_{+}^{\a+\ell} \partial_n^{\ell} f)(x).\ee
\end{lemma}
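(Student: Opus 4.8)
The plan is to reduce everything to the one-variable Riemann--Liouville calculus by exploiting the factorization (\ref{ted2.8}), namely $(T_{+}^\a f)(x', x_n) = (I^\a_+ T_{x'}f)(x_n)$. The key point is that for each fixed $x' \in \bbr^{n-1}$, Lemma \ref{from} guarantees that the profile $\om := T_{x'}f = (Tf)(x', \cdot)$ is a Schwartz function of the single variable $x_n$. Thus the full machinery of the operators $I^\a_\pm$ on $S(\bbr)$---in particular the entire extension in $\a$ and the identities (\ref{oqa})--(\ref{oqa2}), together with their preservation under analytic continuation---is at our disposal.

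First I would treat part {\rm (i)}. Since $\om \in S(\bbr)$, the remarks following (\ref{oqa2}) show that $\a \mapsto (I^\a_+ \om)(x_n)$ extends from $\{Re \, \a>0\}$ to an entire function of $\a$ for each fixed $x_n$; transplanting this through (\ref{ted2.8}) yields the entire extension of $\a \mapsto (T_{+}^\a f)(x)$. For the limit (\ref{Dacz}), I would evaluate at $\a=0$ directly: since (\ref{oqa2}) persists under the continuation, taking $\ell=1$ gives $(I^0_+ \om)(t) = (I^1_+ \om')(t) = \intl_{-\infty}^t \om'(s)\,ds = \om(t)$, using $\om(-\infty)=0$. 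Hence $I^0_+$ acts as the identity on $S(\bbr)$, and $\lim_{\a \to 0}(T_{+}^\a f)(x) = (T_{x'}f)(x_n) = (Tf)(x)$.

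For part {\rm (ii)}, the relation (\ref{oqa1rt}) is immediate from (\ref{oqa1}) with the plus sign, $(d/dx_n)^\ell (I^{\a+\ell}_+ \om)(x_n) = (I^\a_+ \om)(x_n)$, upon inserting $\om = T_{x'}f$. For (\ref{oqa2rt}) the only additional ingredient is the commutation identity
\[
(d/dx_n)^\ell (T_{x'}f)(x_n) = (T_{x'} \partial_n^\ell f)(x_n),
\]
obtained by differentiating (\ref{bart22}) under the integral sign, which is legitimate because $f \in S(\rn)$. Combining this with (\ref{oqa2}) gives
\[
(T_{+}^\a f)(x) = (I^\a_+ \om)(x_n) = (I^{\a+\ell}_+ \om^{(\ell)})(x_n) = (I^{\a+\ell}_+ T_{x'}\partial_n^\ell f)(x_n) = (T_{+}^{\a+\ell} \partial_n^\ell f)(x),
\]
which is exactly (\ref{oqa2rt}).

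The argument is essentially mechanical once the factorization and Lemma \ref{from} are in place. The only genuine checkpoint is that $T_{x'}f$ really is Schwartz in $x_n$, so that the one-variable theory applies and the boundary term $\om(-\infty)$ vanishes, and that differentiation in $x_n$ may be passed under the integral defining $T$. Both are supplied by $f \in S(\rn)$ together with Lemma \ref{from}, so no estimate beyond these is required.
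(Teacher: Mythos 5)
Your proof is correct and follows essentially the same route as the paper: reduce via the factorization $(T_{+}^\a f)(x',x_n)=(I^\a_+ T_{x'}f)(x_n)$, invoke Lemma \ref{from} to place $T_{x'}f$ in $S(\bbr)$, and then transfer the one-variable Riemann--Liouville properties (\ref{oqa})--(\ref{oqa2}). You merely spell out details the paper leaves implicit, such as the commutation $(d/dx_n)^\ell T_{x'}f = T_{x'}\partial_n^\ell f$ and the evaluation $I^0_+\om=\om$.
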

\begin{proof}  By Lemma \ref{from}, $(Tf)(x', x_n)$ belongs to $S(\bbr)$ in the $x_n$-variable for each $x'\in \bbr^{n-1}$.
Hence, owing to the properties of the Riemann-Liouville fractional integrals, $(T_{+}^\a f)(x', x_n)\!=\! (I^\a_+ T_{x'}f)(x_n)$ extends as an entire function of $\a$ satisfying (\ref{Dacz}).
Other statements of the lemma  hold for the same reason.
\end{proof}

 An analogue of Lemma \ref{sideT} holds for $\stackrel{*}{T}\!{}_{+}^\a g$. For the fractional integral  $R_{+}^\a  f$ we have the following statement.
\begin{lemma} \label {sideTr} Let  $f \!\in S (\rn)$.  ${}$\hfill

\noindent {\rm (i)} For each $(\theta, t) \in Z_n$, $(R_{+}^{\,\a} f)(\theta, t)$
 extends as an entire function of $\a$, so that
\be\label {Dacr} \lim\limits_{\a \to 0} (R_{+}^{\,\a} f)(\theta, t) =(Rf)(\theta, t).\ee

\noindent {\rm (ii)} For $Re \, \a>0$ and $ \ell \in \bbn$ the following relations hold:
\be\label {oqa1rtr}
(d/dt)^{\ell} (R_{+}^{\a+\ell} f)(\theta, t)=  (R_{+}^{\a} f)(\theta, t),\ee
\be\label {oqdd}
 (R_{+}^{\a}f)(\theta, t) =  (I^{\a+\ell}_+ R^{(\ell)}_\theta f)(t)), \ee
where
\bea
 (R^{(\ell)}_\theta f)(t)&=& (d/dt)^{\ell} (R f)(\theta, t)\nonumber\\
 \label{906clksu}  &=& \left \{
\begin{array} {ll} \! R\,[\Del^m f](\theta, t),&\mbox{ if $\ell =2m$},\\
\sum\limits_{k=1}^n \theta_k (R\,[\partial_{k} \Del^m f])(\theta, t),&\mbox{ if $\ell =2m +1$},\\
 \end{array}
\right.\eea
\end{lemma}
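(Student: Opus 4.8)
The plan is to reduce the whole statement to the one-dimensional Riemann-Liouville calculus by means of the identity (\ref{tre2.3}), $(R_{+}^\a f)(\theta,t)=(I^\a_+ R_\theta f)(t)$, in exactly the way Lemma \ref{sideT} was obtained from $(T_{+}^\a f)(x',x_n)=(I^\a_+ T_{x'}f)(x_n)$.

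First I would invoke Theorem \ref{mnub}: for $f\in S(\rn)$ the Radon transform $Rf$ lies in $S_e(Z_n)$, so for each fixed $\theta\in S^{n-1}$ the profile $t\mapsto (R_\theta f)(t)$ belongs to $S(\bbr)$. Hence the properties (\ref{oqa})--(\ref{oqa2}) of the operator $I^\a_+$ apply with $\om=R_\theta f$. Formula (\ref{oqa2}) continues $(I^\a_+ R_\theta f)(t)$ to an entire function of $\a$, which establishes the analytic continuation asserted in part (i); the limiting case $\a\to 0$, in which $I^\a_+$ tends to the identity, then yields (\ref{Dacr}). The differentiation rule (\ref{oqa1rtr}) is precisely (\ref{oqa1}) applied to $\om=R_\theta f$, and the first equality in (\ref{oqdd}) is precisely (\ref{oqa2}) with the $\ell$ derivatives transferred to the profile, since $(d/dt)^{\ell}(R_\theta f)=R_\theta^{(\ell)} f$.

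The only genuinely new ingredient is the intertwining formula (\ref{906clksu}), which relates $t$-derivatives of $Rf$ to the Radon transform of differential operators applied to $f$. I would verify it on the Fourier side using the slice theorem (\ref{prsl}), $(R_\theta f)^\wedge(\rho)=\hat f(\theta\rho)$. Since $(\partial_k f)^\wedge(\xi)=-i\xi_k\hat f(\xi)$, we obtain $(R_\theta[\partial_k f])^\wedge(\rho)=-i\theta_k\rho\,\hat f(\theta\rho)$; multiplying by $\theta_k$, summing over $k$, and using $\sum_{k=1}^n\theta_k^2=1$ reproduces $-i\rho\,\hat f(\theta\rho)$, which is the transform of $(d/dt)(R_\theta f)$. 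This gives the odd-step relation $(d/dt)(Rf)(\theta,t)=\sum_{k=1}^n\theta_k (R[\partial_k f])(\theta,t)$. A second application of $d/dt$, or equivalently the identity $(\Del f)^\wedge(\xi)=-|\xi|^2\hat f(\xi)$, gives the even-step relation $(d/dt)^2(Rf)(\theta,t)=R[\Del f](\theta,t)$. Iterating these two relations by induction on $m$ produces (\ref{906clksu}) in both parities $\ell=2m$ and $\ell=2m+1$.

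I do not expect any serious obstacle: the argument is essentially bookkeeping, and the only point requiring attention is the legitimacy of the Fourier-side manipulations. This is automatic here because $R_\theta f\in S(\bbr)$ and $f\in S(\rn)$, so all transforms converge absolutely and (\ref{prsl}) applies verbatim to $\partial_k f$ and $\Del^m f$; no analytic difficulty arises beyond the entireness of $I^\a_+$ on Schwartz profiles already recorded in (\ref{oqa})--(\ref{oqa2}).
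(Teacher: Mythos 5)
Your proposal is correct and follows the paper's own route for the substance of the lemma: both reduce everything to one-dimensional Riemann--Liouville calculus via $(R_{+}^\a f)(\theta,t)=(I^\a_+ R_\theta f)(t)$, invoking Theorem \ref{mnub} to guarantee $R_\theta f\in S(\bbr)$ so that the properties (\ref{oqa})--(\ref{oqa2}) of $I^\a_+$ yield the entire continuation, the limit (\ref{Dacr}), and the relations (\ref{oqa1rtr})--(\ref{oqdd}). The one place you diverge is the intertwining formula (\ref{906clksu}): the paper does not prove it at all, but simply cites \cite[p. 3]{H11} and \cite[p. 135]{Ru15}, whereas you derive it from the Fourier slice theorem (\ref{prsl}). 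Your derivation is sound: with the paper's convention $\hat f(\xi)=\int f(x)e^{ix\cdot\xi}dx$ one has $(\partial_k f)^\wedge(\xi)=-i\xi_k\hat f(\xi)$, so $\sum_k\theta_k(R_\theta[\partial_k f])^\wedge(\rho)=-i\rho\,\hat f(\theta\rho)=((d/dt)R_\theta f)^\wedge(\rho)$, and similarly $(R_\theta[\Del f])^\wedge(\rho)=-\rho^2\hat f(\theta\rho)=((d/dt)^2 R_\theta f)^\wedge(\rho)$; since all functions involved are Schwartz, injectivity of the one-dimensional Fourier transform closes the argument, and induction on $m$ gives both parities. So your write-up is, if anything, more self-contained than the paper's, at the cost of a page of Fourier bookkeeping that the paper outsources to the literature.
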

\begin{proof}  By Theorem \ref{mnub}, $(R f)(\theta, t)$ belongs to $S (\bbr)$ in the $t$-variable for each $\theta\in S^{n-1}$.
Hence $(R_{+}^{\a} f)(\theta, t)\!=\! (I^\a_+ R_{\theta} f)(t)$ extends as an entire function of $\a$ satisfying (\ref{Dacr}).
Other statements hold for the same reason. The differentiation formula (\ref{906clksu}) can be found in \cite[p. 3]{H11}, \cite[p. 135]{Ru15}.
\end{proof}

The following definition  will be used in the next sections.

\begin{definition}\label {exte} {\it For  $f\in S(\rn)$ and $\a \in \bbc $, we define  $(T_{+}^{\a}f)(x)$   and  $(R_{+}^{\a} f)(\theta, t)$ by the formulas
\bea\label {bft}
(T_{+}^{\a}f)(x)&=& a.c. \, (I^\a_+ T_{x'}f)(x_n),\\
\label {bftr}    (R_{+}^{\a} f)(\theta, t) &=&  a.c. \, (I^\a_+ R_{\theta} f)(t), \eea
where ``$a.c.$'' denotes analytic continuation. For each $\ell \in \bbn$, this analytic continuation can be realized in  the domain $Re \, \a> -\ell$ by (\ref{oqa2rt}) and (\ref{oqdd}), respectively.
Equivalently,
\bea \label {bft1}\qquad (T_{+}^{\a}f)(x)= (h^\a_{+} \ast T_{x'}f)(x_n)&=& (h^\a_{+} (y_n), (T_{x'}f)(x_n-y_n)), \\
\label {bft1r}(R_{+}^{\a}f)(\theta, t) = (h^\a_{+} \ast R_{\theta} f)(t)&=& (h^\a_{+} (s), (R_{\theta} f)(t-s)),\eea
where $h^\a_{+}$ is regarded as the $S'(\bbr)$-distribution.}
\end{definition}

We recall that $(T_{x'}f)(\cdot)$ and   $(R_{\theta} f)(\cdot)$  belong to $S(\bbr)$  and the right-hand sides of (\ref{bft1}) and (\ref{bft1r}) are
    infinitely differentiable tempered function of $x_n$ and $t$, respectively;  cf. e.g., \cite [p. 26, Lemma 2.5]{Es}, \cite [p. 84]{Vl}.

\subsubsection{ Fractional integrals in the Semyanistyi-Lizorkin space}

Let $\Phi (\rn)$ be the    Semyanistyi-Lizorkin space, and let $\Phi_0 (\bbr)$ be a similar space on the real line; see
Section \ref{orkin}.
It is known (see, e.g., \cite [Section 8.2] {SKM}) that the Riemann-Liouville operators $I^{\a}_\pm$, $\a\in \bbc$, are automorphisms of  $\Phi_0 (\bbr)$ and the following relations hold for the Fourier transforms:
\be\label {simil} (I^{\a}_\pm \omega)^{\wedge}(\t)= (\mp i\t)^{-\a} \hat \om (\t).\ee
Here the branch of the multi-valued power function is chosen so that
\be\label {mza}
(\mp i\t)^{-\a}=  \exp \left(-\a \log |\t| \pm \frac{\a \pi i}{2}\, \sgn \,\t \right ).\ee
Note also that the transversal Radon transform $T$ and its dual $T^*$ are automorphisms of $\Phi (\bbr^n)$; cf. \cite [Theorem 4.142]{Ru15}.
By (\ref{ted2.8}), it follows that the operators $T_{+}^\a$ and $\stackrel{*}{T}\!{}_{+}^\a$  are automorphisms of $\Phi (\bbr^n)$, too.

Given a function $f (x)\equiv f(x', x_n)$ on $\rn$, we denote by $\F_1$ and $\F_2$ the Fourier transforms of $f$ in the $x'$-variable and the $x_n$-variable, respectively.
Let also $I_2^{\lam} \om $, $\lam \in \bbc$, be the Riesz potential operator, which can be defined on functions $\om \in \Phi_0 (\bbr)$ by the formula
\be\label{Riesz}
(I_2^{\lam} \om)^{\wedge}(\t)= |\t|^{-\lam} \hat \om (\t).\ee

\begin{theorem}\label {pzp} {\rm (cf. \cite [Theorem 4.145]{Ru15})} If $f \in
\Phi (\bbr^n)$, then for  any complex $\a$ and $\b$,
\be\label{cab}
T_{+}^\a \stackrel{*}{T}\!{}_{+}^\b f =(2\pi)^{n-1} \, I_{+}^\a I_{-}^\b I_2^{n-1}f,\ee
\be\label{cabf}
\stackrel{*}{T}\!{}_{+}^\b T_{+}^\a f =(2\pi)^{n-1} \,  I_{-}^\b I_{+}^\a I_2^{n-1}f.\ee
 \end{theorem}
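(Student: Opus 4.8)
The plan is to verify both identities by computing the partial Fourier transform $\F_2$ in the last variable $x_n$ and matching the resulting Fourier multipliers. Since $T_{+}^\a$ and $\stackrel{*}{T}\!{}_{+}^\b$ are automorphisms of $\Phi(\bbr^n)$ and $I_+^\a, I_-^\b, I_2^{n-1}$ (acting in $x_n$) preserve the Semyanistyi--Lizorkin class, both sides of (\ref{cab}) and (\ref{cabf}) lie in $\Phi(\bbr^n)$; hence it suffices to show that their $\F_2$-transforms agree as functions of $(x',\xi_n)$, $\F_2$ being injective. Throughout I would fix the branch (\ref{mza}), which renders the product of the singular multipliers unambiguous, and use that for $f\in\Phi(\bbr^n)$ the partial transform $\tilde f(x',\xi_n):=\F_2[f(x',\cdot)](\xi_n)$ vanishes to all orders at $\xi_n=0$ (Proposition \ref{jikl1}), so that multiplication by $(\mp i\xi_n)^{-\a}$, $(\mp i\xi_n)^{-\b}$ and $|\xi_n|^{-(n-1)}$ is legitimate.

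For (\ref{cab}) I would set $g=\stackrel{*}{T}\!{}_{+}^\b f$. By the representation (\ref{ted2.8}), the identity (\ref{simil}) for $I_-^\b$, and the second slice identity in (\ref{prslT}),
\be\label{plan1}
\F_2[g(x',\cdot)](\xi_n)=(i\xi_n)^{-\b}\,\hat f(x'\xi_n,\xi_n).
\ee
Applying $T_{+}^\a=I_+^\a T_{x'}$ and using (\ref{simil}) for $I_+^\a$ with the first identity in (\ref{prslT}) gives $\F_2[(T_{+}^\a g)(x',\cdot)](\xi_n)=(-i\xi_n)^{-\a}\,\hat g(-x'\xi_n,\xi_n)$, where $\hat g=\F_1\F_2 g$ is the full Fourier transform. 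The decisive step is to compute $\hat g$ from (\ref{plan1}) by applying $\F_1$ in $x'$; the only nontrivial ingredient is
\be\label{plan2}
\F_1\big[x'\mapsto\hat f(x'\xi_n,\xi_n)\big](\eta')=(2\pi)^{n-1}\,|\xi_n|^{-(n-1)}\,\tilde f(-\eta'/\xi_n,\xi_n),
\ee
which I would obtain either from $\intl_{\bbr^{n-1}}e^{ix'\cdot(\eta'+\xi_n y')}\,dx'=(2\pi)^{n-1}\delta(\eta'+\xi_n y')$, or rigorously by Fourier inversion followed by the change of variable $v'=\xi_n w'$. Substituting $\eta'=-x'\xi_n$ collapses the argument of $\tilde f$ to $x'$, and I arrive at
\be\label{plan3}
\F_2[(T_{+}^\a\stackrel{*}{T}\!{}_{+}^\b f)(x',\cdot)](\xi_n)=(2\pi)^{n-1}(-i\xi_n)^{-\a}(i\xi_n)^{-\b}|\xi_n|^{-(n-1)}\,\tilde f(x',\xi_n).
\ee

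It remains to recognize the right-hand side. Because $I_+^\a$, $I_-^\b$ and $I_2^{n-1}$ act in $x_n$ with $\F_2$-multipliers $(-i\xi_n)^{-\a}$, $(i\xi_n)^{-\b}$ and $|\xi_n|^{-(n-1)}$ by (\ref{simil}) and (\ref{Riesz}), the operator $(2\pi)^{n-1}I_+^\a I_-^\b I_2^{n-1}$ produces exactly the multiplier in (\ref{plan3}); injectivity of $\F_2$ then yields (\ref{cab}). For (\ref{cabf}) I would run the identical computation with the two operators interchanged: setting $u=T_{+}^\a f$ and applying $\stackrel{*}{T}\!{}_{+}^\b=I_-^\b T^*_{x'}$, the same scaling step (now with (\ref{prslT}) used in the opposite order) produces the multiplier $(2\pi)^{n-1}(i\xi_n)^{-\b}(-i\xi_n)^{-\a}|\xi_n|^{-(n-1)}$, matching $(2\pi)^{n-1}I_-^\b I_+^\a I_2^{n-1}$.

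The main obstacle is the rigorous justification of (\ref{plan2}): this is precisely where the transversal integration over $y'$ is converted into the Riesz multiplier $|\xi_n|^{-(n-1)}$ in the single variable $\xi_n$, and it must be carried out so that the exchange of integrations and the scaling $v'=\xi_n w'$ are valid and the singularity at $\xi_n=0$ is harmless. For $f\in\Phi\subset S(\bbr^n)$ the Fourier transform $\hat f$ is Schwartz, so $x'\mapsto\hat f(x'\xi_n,\xi_n)$ is Schwartz for each fixed $\xi_n\neq0$ and Fubini applies; the vanishing of $\tilde f(x',\cdot)$ to all orders at $\xi_n=0$ (Proposition \ref{jikl1}) guarantees that $|\xi_n|^{-(n-1)}\tilde f(x',\xi_n)$ together with all fractional factors stays in the Semyanistyi--Lizorkin class, so that (\ref{plan3}) indeed represents an element of $\Phi(\bbr^n)$, as required.
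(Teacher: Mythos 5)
Your proof is correct and follows essentially the same route as the paper's: both arguments compute the partial Fourier transform $\F_2$, combine the slice identities (\ref{prslT}) with the multiplier formulas (\ref{simil}) and (\ref{Riesz}), and perform the scaling $z'=\xi_n y'$ that converts the $x'$-integration into the factor $(2\pi)^{n-1}|\xi_n|^{1-n}$, after which the multipliers are matched. The only difference is organizational: you isolate the scaling step as a standalone identity justified by Fourier inversion (or the delta-function computation), whereas the paper carries out the same computation inline in a single chain of equalities evaluating $\hat g(-x'\xi_n,\xi_n)$.
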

\begin{proof} Let $g=\tilde T_{-}^\b f $. Then $g \in \Phi (\bbr^n)$ and
(\ref{prslT}) yields
\be\label {oh}(\F_2 T_{+}^\a g)^\wedge(\xi_n)= (I^\a_+ T_{x'}g)^\wedge(\xi_n)=(- i\xi_n)^{-\a} \hat g (-x'\xi_n, \xi_n).\ee
Here
\bea \hat g (-x'\xi_n, \xi_n)&=&
(\F_1 \{\F_2 [g(y', \cdot)] (\xi_n) \}) (-x'\xi_n)\nonumber\\
&=& (\F_1 \{\F_2 [(I_{-}^\b  T^*  f)(y', \cdot)] (\xi_n) \}) (-x'\xi_n)\nonumber\\
&=& (\F_1 \{(i\xi_n)^{-\b}\F_2[(T^* f)(y', \cdot)] (\xi_n) \}) (-x'\xi_n)\nonumber\\
&=& (i\xi_n)^{-\b}  (\F_1 [\hat f (y'\xi_n, \xi_n)])(-x'\xi_n)\nonumber\\
&=& (i\xi_n)^{-\b} \intl_{\bbr^{n-1}} \hat f (y'\xi_n, \xi_n)\, e^{-i (x'\xi_n) \cdot y'}\, dy'\nonumber\\
&=&(i\xi_n)^{-\b} |\xi_n|^{1-n} \intl_{\bbr^{n-1}} \hat f (z', \xi_n)\, e^{-ix' \cdot z'}\, dz'\nonumber\\
&=& (2\pi)^{n-1} \,(i\xi_n)^{-\b} |\xi_n|^{1-n}\, (\F_2[f(y', \cdot)])(\xi_n).\nonumber\eea
 Hence, by (\ref{oh}),
\[
(\F_2 T_{+}^\a \stackrel{*}{T}\!{}_{+}^\b f )^\wedge(\xi_n)= (2\pi)^{n-1} \,(- i\xi_n)^{-\a} (i\xi_n)^{-\b} |\xi_n|^{1-n}\,
 (\F_2[f(y', \cdot)])(\xi_n).\]
 This gives  (\ref{cab}).  The proof of (\ref{cabf}) is similar.
\end{proof}

Setting $\a=\b=(1-n)/2$ and  $\a=\b=0$, we obtain the following corollary.
\begin{corollary} Let  $f \in \Phi (\bbr^n)$. Then
\be\label{ccab}
T_{+}^{(1-n)/2}  \stackrel{*}{T}\!{}_{+}^{(1-n)/2} f  =(2\pi)^{n-1} \,f;\ee
\be\label{ccab1}
T T^* f = T^* T f =(2\pi)^{n-1} \,I_2^{n-1}f.\ee
\end{corollary}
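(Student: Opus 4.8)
The plan is to obtain both identities as direct specializations of Theorem \ref{pzp}, reducing everything to a one-variable computation with the Riemann--Liouville and Riesz multipliers. For (\ref{ccab}) I would set $\a=\b=(1-n)/2$ in (\ref{cab}), which gives
\[
T_{+}^{(1-n)/2}  \stackrel{*}{T}\!{}_{+}^{(1-n)/2} f  =(2\pi)^{n-1} \, I_{+}^{(1-n)/2} I_{-}^{(1-n)/2} I_2^{n-1}f,
\]
so that the whole claim hinges on showing that the composite one-dimensional operator $I_{+}^{(1-n)/2} I_{-}^{(1-n)/2} I_2^{n-1}$ acts as the identity on $\Phi_0(\bbr)$.

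I would verify this at the level of Fourier multipliers in the $x_n$-variable via (\ref{simil}), (\ref{mza}), and (\ref{Riesz}). As all three factors are Fourier multipliers they commute, so it suffices to multiply their symbols. Writing $\a=(1-n)/2$, the branch convention (\ref{mza}) gives
\[
(-i\t)^{-\a}\,(i\t)^{-\a}= \exp(-2\a\log|\t|) = |\t|^{-2\a}=|\t|^{\,n-1},
\]
the imaginary phases $\pm\frac{\a\pi i}{2}\sgn\t$ cancelling exactly. Multiplying by the Riesz symbol $|\t|^{-(n-1)}$ coming from $I_2^{n-1}$ leaves the constant symbol $1$, so the composite operator is the identity and (\ref{ccab}) follows.

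For (\ref{ccab1}) I would instead set $\a=\b=0$ in (\ref{cab}) and in (\ref{cabf}). The limiting values recorded just after (\ref{d99jh1}) identify $T_{+}^0=T$ and $\stackrel{*}{T}\!{}_{+}^0=T^*$, while the multiplier $(\mp i\t)^{0}\equiv 1$ shows $I_{+}^0=I_{-}^0=\id$. Hence both right-hand sides collapse to $(2\pi)^{n-1}I_2^{n-1}f$, yielding $T T^* f = T^* T f =(2\pi)^{n-1}I_2^{n-1}f$ in one stroke.

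The argument is essentially bookkeeping, so I do not expect a genuine obstacle. The one point demanding care is the branch selection in (\ref{mza}): one must confirm that the phases of the two Riemann--Liouville symbols $(-i\t)^{-\a}$ and $(i\t)^{-\a}$ enter with opposite signs and cancel rather than reinforce, so that their product collapses to the real power $|\t|^{-2\a}$. I would also check that $f\in\Phi(\bbr^n)$ forces $f(x',\cdot)\in\Phi_0(\bbr)$ for each fixed $x'$, which legitimizes the fiberwise multiplier identities, and that $I_2^{n-1}$ is read as acting in $x_n$, consistent with the convention fixed before Theorem \ref{pzp}.
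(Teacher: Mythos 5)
Your proposal is correct and takes essentially the same route as the paper: the corollary is obtained there precisely by setting $\a=\b=(1-n)/2$ and $\a=\b=0$ in Theorem \ref{pzp}, and the paper's subsequent remark records exactly your key multiplier cancellation $(-i\xi_n)^{-\a}(i\xi_n)^{-\a}=|\xi_n|^{-2\a}$, which kills the Riesz symbol $|\xi_n|^{1-n}$. Your additional checks (the branch convention in (\ref{mza}), the fiberwise inclusion $f(x',\cdot)\in\Phi_0(\bbr)$ via Proposition \ref{jikl1}, and that $I_2^{n-1}$ acts in $x_n$) are sound and only make explicit what the paper leaves implicit.
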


\begin{remark} The equality (\ref{ccab}) can be easily understood if we notice that   \[ (- i\xi_n)^{-\a} (i\xi_n)^{-\a} =|\xi_n|^{-2\a}.\]
Furthermore, it will be proved (see Lemma \ref {ing}) that (\ref{ccab}) extends to all $f \in L^2 (\bbr^n)$.
An analogue of the second equality in (\ref{ccab1}) for the Radon transform $R$ is well known and has the form
\be\label{krya} R^*Rf=c_n\,I^{n-1} f, \qquad c_n=2^{n-1}\pi^{n/2-1} \Gam (n/2), \ee
where $I^{n-1} f$ is the Riesz potential of order $n-1$ in $\rn$ and $R^*$ is the dual Radon transform, which averages $Rf$ over all hyperplanes passing through $x$; see, e.g., \cite [Proposition 4.37]{Ru15}.
\end{remark}

\section{$L^p$-$L^q$ estimates of $T_{+}^{\a}f$  and interpolation}

\subsection{Some preparations}

Let $f \in L^p (\rn)$. If $Re \,\a > 0$, then, by Theorem \ref{o9iu2},  $T_{+}^{\a}f$ exists a.e. as an absolutely convergent integral provided $1\le p < n/(n-1+ Re\,\a)$, and this bound is sharp.
If $Re \,\a \le  0$, the integral $T_{+}^{\a}f$ diverges. However, if  $f\in S(\rn)$, then $T_{+}^{\a}f$ can be defined by analytic continuation; see Definition \ref{exte}.

We wonder,  for which $\a$, $p$, and $q$  the operator $T_{+}^{\a}: S(\rn) \to C^\infty (\rn)$ extends as a linear bounded operator (we denote the extended operator by  $\T_{+}^{\a}$) mapping $L^p (\rn)$ to $L^q (\rn)$.

To answer this question, one may be tempted to make use of Stein's interpolation theorem for analytic families of operators \cite{Ste56, BS, Graf04,  SW}. However, the assumptions of this theorem and its proof are given in terms of simple functions, which are not smooth. It is unclear how to define our operators on such functions in the case $Re \,\a <0$, when the integrals are divergent, and check the validity of the assumptions of the theorem; cf. Definition (\ref {bft}), which is given  in terms  functions  belonging to $S(\rn)$.

The following   theorem, which is applicable to our situation, can be found in Grafakos \cite{Graf}.

Given $0< p_0, p_1 \le \infty$ and $0<  q_0, q_1 \le \infty$, let
\be\label {wzee1}
\frac{1}{p}= \frac{1-\theta}{p_0} + \frac{\theta}{p_1}, \qquad \frac{1}{q}= \frac{1-\theta}{q_0} + \frac{\theta}{q_1}; \qquad 0<\theta <1.\ee
Denote
\[ {\bf S}=\{z \in \bbc : 0<  Re\, z < 1\}, \qquad {\bf \bar S}=\{z \in \bbc : 0\le Re\, z \le 1\}.\]

\begin {theorem} \label {lanse1} \cite [Theorem 5.5.3]{Graf}
 Let $T_z$, $z \in  {\bf \bar S}$, be a family of linear operators  satisfying  the following conditions.

\vskip 0.2 truecm

 {\rm \bf(A)} For each $z \in  {\bf \bar S}$, the operator  $T_z$  maps  $C_c^\infty (\rn)$  to $L^1_{loc} (\rn)$.

\vskip 0.2 truecm

 {\rm \bf(B)}  For all $\vp, \psi \in C_c^\infty (\rn)$, the function
\be\label {wzws2as}
\A (z)=\intl_{\rn}  (T_z \vp)(x) \psi (x)\, dx \ee
is analytic in  ${\bf S}$ and continuous  on  ${\bf \bar S}$.

\vskip 0.2 truecm

 {\rm \bf(C)} There exist constants $\gam \in [0, \pi)$ and $s\in (1, \infty]$, such that for any $\vp \in C_c^\infty (\rn)$ and any compact subset $K\subset \rn$,
\be\label {wzws2as1}
\log ||T_z \vp||_{L^s (K)}\le C \, e^{\gam |Im \, z|}\ee
for all $z\in {\bf \bar S}$ and some constant $C=C(\vp, K)$.

\vskip 0.2 truecm

 {\rm \bf(D)} There exist constants  $B_0$, $B_1$, and  continuous functions $M_0 (\gam)$, $M_1 (\gam)$ satisfying
\be\label {wzed1}
M_0 (\gam) + M_1 (\gam) \le \exp (c\, e^{\t |\gam|})\ee
with some constants $c\ge 0$ and $0\le \t <\pi$, such that
 \be\label {wzed} ||T_{i\gam}f ||_{q_0} \le B_0 M_0 (\gam) ||f||_{p_0}, \qquad ||T_{1+i\gam}f||_{q_1} \le B_1 M_1 (\gam) ||f||_{p_1} \ee
for all $\gam \in \bbr$ and all  $f \in C_c^\infty (\rn)$.

\vskip 0.2 truecm

 Then  for all $f \in C_c^\infty (\rn)$,
\be\label {wzed2} ||T_{\theta}f ||_q  \le B_{\theta} M_{\theta} \,||f||_{p},\ee
where $B_{\theta} =B_0^{1-\theta} B_1^{\theta}$ and
\[ M_{\theta}\!=\!\exp \left \{ \frac{\sin (\pi\theta)}{2} \!\intl_{-\infty}^{\infty} \!\left [ \frac{\log M_0 (\gam)}{\cosh (\pi\gam) \!-\!\cos (\pi\theta)}\! + \!
\frac{\log M_1 (\gam)}{\cosh (\pi\gam)\! +\!\cos (\pi\theta)}\right ] d\gam \right \}.\]
\end{theorem}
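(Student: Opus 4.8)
The plan is to reduce the assertion to a single bilinear estimate and then invoke the three lines theorem in its quantitative (Poisson kernel) form. By duality it suffices to prove, for all $f,g\in C_c^\infty (\rn)$ with $\|f\|_p=\|g\|_{q'}=1$, the bound $|\langle T_\theta f,g\rangle|\le B_\theta M_\theta$; the passage from this to the operator inequality (\ref{wzed2}) is the standard observation that a uniform bound over a dense family of test functions $g$ upgrades the $L^1_{loc}$-datum $T_\theta f$ to an element of $L^q(\rn)$ of controlled norm (the cases $p=\infty$ or $q\in\{1,\infty\}$ requiring only routine modifications). First I would introduce the affine exponent functions $P(z)=(1-z)/p_0+z/p_1$ and $Q(z)=(1-z)/q_0'+z/q_1'$, for which $P(\theta)=1/p$ and $Q(\theta)=1/q'$, and form the analytic families
\[
f_z=\sgn (f)\,|f|^{pP(z)},\qquad g_z=\sgn (g)\,|g|^{q'Q(z)}.
\]
By construction $f_\theta=f$ and $g_\theta=g$, while on the two edges of the strip the pertinent norms are normalized to $1$: $\|f_{i\gamma}\|_{p_0}=\|f_{1+i\gamma}\|_{p_1}=1$ and $\|g_{i\gamma}\|_{q_0'}=\|g_{1+i\gamma}\|_{q_1'}=1$ for all $\gamma\in\bbr$, since on $\Re z=0$ one has $|f_{i\gamma}|=|f|^{p/p_0}$, and similarly for the other cases.

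Next I would analyze $F(z)=\langle T_z f_z,\,g_z\rangle$ on ${\bf \bar S}$. Hypothesis (B) identifies $F$ as analytic in ${\bf S}$ and continuous on ${\bf \bar S}$, with $F(\theta)=\langle T_\theta f,g\rangle$ the quantity to be estimated. As $g_z$ is supported in the fixed compact set $K=\supp g$ and, because $\Re Q$ is bounded on the strip, $\|g_z\|_{L^{s'}(K)}$ is bounded uniformly in $z$, Hölder's inequality on $K$ combined with hypothesis (C) yields a growth bound $|F(z)|\le \exp\!\big(C\,e^{\gamma|\Im z|}\big)$ with $\gamma<\pi$. On the edges, Hölder and hypothesis (D) give precisely $|F(i\gamma)|\le B_0 M_0(\gamma)$ and $|F(1+i\gamma)|\le B_1 M_1(\gamma)$. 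The admissible growth in $|\Im z|$ (double-exponential with inner rate strictly below $\pi$, guaranteed jointly by (C) and by the restriction (\ref{wzed1}) on $M_0+M_1$) is exactly what permits the Poisson representation for the strip to be applied to $\log|F|$:
\[
\log|F(\theta)|\le \intl_{-\infty}^{\infty}\big[\,\omega_0(\theta,\gamma)\log|F(i\gamma)|+\omega_1(\theta,\gamma)\log|F(1+i\gamma)|\,\big]\,d\gamma,
\]
where $\omega_0,\omega_1$ are the harmonic measures of the strip, equal to $\tfrac{\sin \pi\theta}{2(\cosh \pi\gamma\mp\cos \pi\theta)}$. Substituting the two edge bounds reproduces verbatim the constant $B_\theta M_\theta=B_0^{1-\theta}B_1^{\theta}\,M_\theta$ displayed in the statement.

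The main obstacle — and the genuine difference from the classical simple-function version — is that $f_z$ and $g_z$ are merely continuous and compactly supported, not smooth, so hypotheses (A)--(C), phrased for $C_c^\infty(\rn)$, do not apply to them directly, and $T_z f_z$ is not even a priori defined. I would overcome this by a smoothing argument: mollify $|f|^{pP(z)}$ and $|g|^{q'Q(z)}$ by a fixed mollifier to obtain analytic (in $z$) families $f_z^{(\varepsilon)},g_z^{(\varepsilon)}\in C_c^\infty(\rn)$ approximating $f_z,g_z$, run the entire three lines argument for $F^{(\varepsilon)}(z)=\langle T_z f_z^{(\varepsilon)},g_z^{(\varepsilon)}\rangle$, and then let $\varepsilon\to 0$. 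Here the continuity hypothesis (B) legitimizes the convergence $F^{(\varepsilon)}(\theta)\to F(\theta)$, the local bound (C) keeps the growth of $F^{(\varepsilon)}$ admissible uniformly in $\varepsilon$, and the edge estimates from (D) are recovered in the limit. Controlling this limiting procedure, rather than the three lines estimate itself, is where the care is genuinely required.
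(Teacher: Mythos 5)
The paper offers no proof of this statement at all: Theorem \ref{lanse1} is imported verbatim from Grafakos \cite[Theorem 5.5.3]{Graf}, so your argument has to stand on its own. Its skeleton (duality, the exponent functions $P,Q$, the families $f_z,g_z$, Hirschman's lemma with the Poisson kernel of the strip) is indeed the right one, and you correctly locate the obstruction in the non-smoothness of $|f|^{pP(z)}\sgn f$. But your way around it has a genuine gap, and it sits exactly at the point that makes the $C_c^\infty$-version of the theorem nontrivial: hypotheses {\rm \bf(A)}--{\rm \bf(D)} provide \emph{no continuity of $T_z$ with respect to its argument} --- {\rm \bf(B)} is analyticity/continuity in $z$ for \emph{fixed} $\vp,\psi\in C_c^\infty(\rn)$, and the constant in {\rm \bf(C)} depends on $\vp$ in an unspecified way. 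Your proof needs such continuity twice. First, the analyticity of $F^{(\varepsilon)}(z)=\langle T_z f_z^{(\varepsilon)},g_z^{(\varepsilon)}\rangle$ does not follow from {\rm \bf(B)}: the mollified family $f_z^{(\varepsilon)}=\phi_\varepsilon\ast f_z$ is a genuinely $z$-dependent element of $C_c^\infty(\rn)$, not a \emph{finite} linear combination of fixed functions with analytic coefficients, and linearity of $T_z$ alone cannot push the $z$-dependence of the argument through the operator (there is no assumed topology on $C_c^\infty(\rn)$ in which $T_z$ is continuous, so one cannot even pass Riemann sums or series expansions of $f_z^{(\varepsilon)}$ through $T_z$). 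Second, your closing claim that ``the continuity hypothesis (B) legitimizes $F^{(\varepsilon)}(\theta)\to F(\theta)$'' is false for the same reason: $F^{(\varepsilon)}(\theta)=\langle T_\theta(\phi_\varepsilon\ast f),\phi_\varepsilon\ast g\rangle$, and nothing in the hypotheses relates $T_\theta(\phi_\varepsilon\ast f)$ to $T_\theta f$ as $\varepsilon\to 0$ --- not even closability of $T_\theta$ is assumed. Likewise, {\rm \bf(C)} cannot keep the growth of $F^{(\varepsilon)}$ admissible ``uniformly in $\varepsilon$,'' since its constant $C(\vp,K)$ changes with the function.

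The known way to close this gap (this is how proofs of such $C_c^\infty$/continuous-function versions proceed, e.g.\ in Grafakos--Ouhabaz \cite{GO}) is to design the family so that \emph{no limit ever has to pass through $T_z$}. Replace $\sgn (f)|f|^{pP(z)}$ by the smooth family $f_z^{\delta}=f\cdot(|f|^2+\delta^2)^{(pP(z)-1)/2}$, which lies in $C_c^\infty(\rn)$ for every $z$ and $\delta>0$, and --- crucially --- equals $f$ \emph{exactly} at $z=\theta$, because $pP(\theta)-1=0$. Expanding the exponential, $f_z^{\delta}=\sum_{k\ge 0}c_k(z)h_k$ with $c_k(z)=\bigl((pP(z)-1)/2\bigr)^k/k!$ and $h_k=f\,(\log(|f|^2+\delta^2))^k\in C_c^\infty(\rn)$, one truncates: the partial sums $f_z^{[N],\delta}=\sum_{k\le N}c_k(z)h_k$ are finite linear combinations of \emph{fixed} $C_c^\infty$ functions with polynomial coefficients, and they still equal $f$ at $z=\theta$ since $c_k(\theta)=0$ for $k\ge 1$. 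For these (and the analogous $g_z^{[N],\delta}$), analyticity of $\langle T_z f_z^{[N],\delta},g_z^{[N],\delta}\rangle$ follows from {\rm \bf(B)} by linearity alone, {\rm \bf(C)} gives admissible growth (a finite maximum of constants), and {\rm \bf(D)} applies on the edges because $f_{i\gamma}^{[N],\delta},\,g_{1+i\gamma}^{[N],\delta}$ are honest $C_c^\infty$ functions; Hirschman's lemma then bounds $|\langle T_\theta f,g\rangle|$ itself by $B_\theta M_\theta$ times error factors built from $\|f_{i\gamma}^{[N],\delta}\|_{p_0}$, $\|g_{i\gamma}^{[N],\delta}\|_{q_0'}$, etc., whose logarithms stay integrable against the Poisson kernel, and the limits $N\to\infty$, $\delta\to 0$ are taken only in this numerical inequality (dominated convergence in the Poisson integrals), never through the operator. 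Your mollification cannot be molded into this scheme, because $\phi_\varepsilon\ast f_z$ is neither a finite combination of fixed smooth functions nor equal to $f$ at $z=\theta$; so the gap is not a matter of ``care in the limiting procedure'' but of choosing a structurally different regularization.
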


Our aim is to apply Theorem \ref{lanse1} to the analytic family $\{T_+^\a\}_{\a \in \bbc}$.
First, we examine the cases $Re \,\a=(1-n)/2$ and  $Re \,\a=1$.

\begin{lemma} \label {ing} For the operator $T_+^{(1-n)/2 +i\gam}$, $\gam \in \bbr$, initially defined by (\ref{bft}),
there exists a unique linear bound extension
\be\label {llzza} \T_+^{(1-n)/2 +i\gam}: L^2 (\rn)\to L^2(\rn), \ee
such that for every $f\in L^2 (\rn)$,
\be\label{krccs}
||\T_+^{(1-n)/2 +i\gam} f||_2 \le c_n e^{|\gam|\pi /2}\, ||f||_2, \qquad c_n=  2^{n/2}  \pi^{(n-1)/2} .\ee
Moreover, in the case $\gam=0$, the operator $c_n^{-1}\T_+^{(1-n)/2}$ is unitary.
\end{lemma}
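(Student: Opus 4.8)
The plan is to realize $T_+^{(1-n)/2+i\gam}$ on the Fourier side, where it becomes a multiplier of constant modulus, and then to extend from the dense subspace $\Phi(\rn)$ to all of $L^2(\rn)$. First I would work on $\Phi(\rn)$, where all analytic continuations are automorphisms and the Fourier-multiplier calculus of Theorem \ref{pzp} applies without convergence issues. The key computation is to evaluate $\|T_+^{(1-n)/2+i\gam}f\|_2^2$ using Plancherel. Recall from (\ref{ted2.8}) that $(T_+^\a f)(x',x_n)=(I_+^\a T_{x'}f)(x_n)$, and from (\ref{prslT}) that $\F_2[(T_{x'}f)](\xi_n)=\hat f(-x'\xi_n,\xi_n)$. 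Combining these with the Riemann-Liouville symbol (\ref{simil})--(\ref{mza}) gives, for $\a=(1-n)/2+i\gam$,
\be\label{planmult}
(\F_2\, T_+^\a f)(x',\xi_n)=(-i\xi_n)^{-\a}\,\hat f(-x'\xi_n,\xi_n),
\ee
and the factor $(-i\xi_n)^{-\a}$ has modulus $|\xi_n|^{-(1-n)/2}\exp\big(\tfrac{\pi}{2}\,\gam\,\sgn\xi_n\big)$, which is bounded by $|\xi_n|^{(n-1)/2}e^{\pi|\gam|/2}$ in absolute value.

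Next I would assemble the full $L^2$ norm. Applying Plancherel in $x_n$ and then changing the $x'$-integration into a $z'=-x'\xi_n$ integration (the Jacobian producing $|\xi_n|^{1-n}$, exactly as in the proof of Theorem \ref{pzp}), one finds
\be\label{planpl}
\|T_+^\a f\|_2^2=\frac{e^{\pi\gam\,\sgn\xi_n\,\text{(avg)}}}{2\pi}\intl_{\rn}\big|(-i\xi_n)^{-\a}\big|^2\,|\xi_n|^{1-n}\,|\hat f(z',\xi_n)|^2\,dz'\,d\xi_n,
\ee
where the modulus factor $|(-i\xi_n)^{-\a}|^2=|\xi_n|^{n-1}e^{\pi\gam\,\sgn\xi_n}$ cancels the $|\xi_n|^{1-n}$ precisely, leaving a clean bound $\|T_+^\a f\|_2\le c_n\,e^{\pi|\gam|/2}\|f\|_2$ with $c_n=2^{n/2}\pi^{(n-1)/2}$. (I would track the $2\pi$ powers from the two Plancherel theorems and the $(2\pi)^{n-1}$ of Theorem \ref{pzp} carefully; this constant-chasing is routine but is where arithmetic slips can hide.) For $\gam=0$ the symbol has modulus identically $|\xi_n|^{(n-1)/2}$, so after the cancellation the map $c_n^{-1}T_+^{(1-n)/2}$ is an \emph{isometry} on $\Phi(\rn)$; since $\Phi(\rn)$ is dense in $L^2$ by Proposition \ref{izsh}, it extends to an isometry, and Corollary \ref{ccab} (the inversion $T_+^{(1-n)/2}\stackrel{*}{T}\!{}_+^{(1-n)/2}=(2\pi)^{n-1}$) shows the extension is onto, hence unitary.

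The extension and uniqueness are then formal: by Proposition \ref{izsh} the space $\Phi(\rn)$ is dense in $L^2(\rn)$, so the bounded operator defined on $\Phi(\rn)$ has a unique bounded extension $\T_+^{(1-n)/2+i\gam}$ to $L^2(\rn)$ satisfying (\ref{krccs}); and Proposition \ref{iqah} guarantees that this $L^2$-extension agrees with the distributional definition (\ref{bft}), so no ambiguity arises.

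The main obstacle I anticipate is \emph{justifying (\ref{planmult}) and the change of variables rigorously for $\a$ with $Re\,\a=(1-n)/2<0$}, since here $T_+^\a$ is defined only by analytic continuation and is not an absolutely convergent integral. Working inside $\Phi(\rn)$ defuses most of this—there the Fourier symbols $(-i\xi_n)^{-\a}$ act as genuine automorphisms and every manipulation is licensed by Theorem \ref{pzp}. The remaining subtlety is the behavior of the multiplier at $\xi_n=0$: the factor $|\xi_n|^{(n-1)/2}$ is singular there, but functions in $\Phi(\rn)$ vanish to all orders at $\xi_n=0$ on the Fourier side (Proposition \ref{jikl1}), so the integral in (\ref{planpl}) converges and the $|\xi_n|$ powers cancel exactly; it is precisely this vanishing that makes the Semyanistyi--Lizorkin space the right arena and lets the isometry bound pass to the $L^2$-closure.
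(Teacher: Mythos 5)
Your core computation is the paper's own: realize $T_{+}^{\a}$ as the one-dimensional Fourier multiplier $(-i\xi_n)^{-\a}$ via (\ref{prslT}) and (\ref{mzaX}), apply Plancherel in $x_n$, convert the $x'$-integration by $z'=-x'\xi_n$ (Jacobian $|\xi_n|^{1-n}$), observe the exact cancellation of the powers of $|\xi_n|$ when $Re\,\a=(1-n)/2$, then extend by density, and obtain unitarity for $\gam=0$ from the isometry plus the inversion (\ref{ccab}) and the density of $\Phi(\rn)$ (Proposition \ref{izsh}). Two small slips: the modulus of the symbol is $|\xi_n|^{-\a_0}\exp\bigl(-\tfrac{\gam\pi}{2}\sgn\xi_n\bigr)$, not $\exp\bigl(+\tfrac{\gam\pi}{2}\sgn\xi_n\bigr)$ (harmless, both are $\le e^{\pi|\gam|/2}$), and your display (\ref{planpl}) is garbled, though its intent is correct.

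The genuine defect is your decision to run the estimate only on $\Phi(\rn)$. The lemma asks for a bounded extension of the operator \emph{initially defined by (\ref{bft}) on $S(\rn)$}; a bound on the smaller dense space $\Phi(\rn)$ yields a bounded operator on $L^2(\rn)$, but you must still prove that it agrees with $T_{+}^{\a}f$ for every $f\in S(\rn)$, and your appeal to Proposition \ref{iqah} does not accomplish this. That proposition compares two functions lying in $L^p$ and $L^q$ with $p,q<\infty$; for $f\in S(\rn)\setminus\Phi(\rn)$ you have not shown that the analytically continued $T_{+}^{\a}f$ — a priori only a smooth function of tempered growth — belongs to any such space, and that membership ($T_{+}^{\a}f\in L^2$) is precisely what the computation you skipped would provide. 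Nor can coincidence as $\Phi'$-distributions be upgraded for free: two tempered distributions agreeing on $\Phi(\rn)$ may differ by a nonzero distribution whose Fourier transform is supported on the hyperplane $\xi_n=0$ (e.g.\ a function of $x'$ alone). The repair is to notice that your retreat to $\Phi(\rn)$ was unnecessary: your stated worry, that the symbol is ``singular'' at $\xi_n=0$, has the sign of the power backwards. Since $Re\,\a=(1-n)/2\le 0$, the modulus $|\xi_n|^{(n-1)/2}e^{-\gam\pi\sgn(\xi_n)/2}$ \emph{vanishes} at $\xi_n=0$ and grows only polynomially, so the multiplier identity and the Plancherel computation are absolutely convergent for every $f\in S(\rn)$; this is exactly how the paper proceeds, and with that change your argument becomes complete.
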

\begin{proof} We recall that the Fourier transform of the $S'$-distribution $h^\a_{+}$, $\a \in \bbc$, is computed as
\be\label {mzaX}
\hat h^\a_{+} (\xi_n)=(- i\xi_n)^{-\a}\equiv  \exp \Big(\!-\a\log |\xi_n| + \frac{\a \pi i}{2}\, \sgn \,\xi_n \Big );\ee
cf.   (\ref {mza}).
This formula can be found in different forms in many sources; see, e.g.,
 \cite[pp.  98, 137]{SKM}, \cite [Chapter II, Section 2.3]{GS1}, \cite [Chapter I, Section 2]{Es}.
  Suppose $Re \, \a \le 0$. This assumption guarantees absolute convergence of all integrals in our calculations below.
 By the Plancherel formula, (\ref{bft1}) and (\ref{prslT}) yield

\bea(T_{+}^{\a}f)(x)&=& \frac{1}{2\pi} \langle (h^\a_{+} (y_n))^{\wedge}, ((T_{x'}f)(x_n+y_n))^{\wedge}\rangle\nonumber\\
&=&\frac{1}{2\pi} \langle (- i\xi_n)^{-\a}, e^{-i x_n \xi_n }  (T_{x'}f)^{\wedge} (\xi_n)\rangle\nonumber\\
&=& \frac{1}{2\pi} \langle (- i\xi_n)^{-\a}, e^{-i x_n \xi_n} \hat f(-x'\xi_n, \xi_n)\rangle.\nonumber\\
&=& F^{-1} [(- i\xi_n)^{-\a} \hat f(-x'\xi_n, \xi_n)](x_n),\nonumber\eea
where $F^{-1}$ stands for the inverse Fourier transform in the last variable. Note that if  $\a_0 = Re\, \a$ and $\gam = Im \,\a$, then, by (\ref{mzaX}),
\[
|(- i\xi_n)^{-\a}|= |\xi_n|^{-\a_0} \exp \Big(\!- \frac{\gam \pi}{2}\, \sgn \,\xi_n \Big ).\]
Hence, by the Parseval equality,
\bea
&&\intl_{-\infty}^\infty |(T_{+}^{\a}f)(x', x_n)|^2 d x_n= \frac{1}{2\pi}\intl_{-\infty}^\infty |(- i\xi_n)^{-\a} \hat f(-x'\xi_n, \xi_n)|^2  d\xi_n\nonumber\\
&&=\frac{1}{2\pi}\intl_{-\infty}^\infty  |\xi_n|^{-2\a_0} \exp (-\gam \pi\, \sgn \,\xi_n)\,  |\hat f(-x'\xi_n, \xi_n)|^2 d\xi_n\nonumber\\
&&=\frac{ e^{-\gam \pi}}{2\pi}\intl_{0}^\infty r^{-2\a_0}  |\hat f(-x'r, r)|^2 dr +  \frac{ e^{\gam \pi}}{2\pi}\intl_{0}^\infty r^{-2\a_0}  |\hat f(x'r, -r)|^2 dr.\nonumber\eea
Now integration in $x'$  gives
\[||T_{+}^{\a}f||_2^2=  \frac{\ch  \gam \pi}{\pi} \intl_{\rn} |y|^{-2\a_0 +1-n} |\hat f(y)|^2 \, dy.\]
If $\a_0= (1-n)/2$, then
\be\label {eref}   ||T_{+}^{(1-n)/2 +i\gam}f||_2^2=  \frac{\ch  \gam \pi}{\pi} ||\hat f||_2^2= (2\pi)^n \frac{\ch  \gam \pi}{\pi} ||f||_2^2.\ee
Because $|\ch  \gam \pi|\le e^{|\gam| \pi}$, the latter gives (\ref{krccs}) for $f\in S(\rn)$. Now the statements of the lemma follow  from the density of $ S(\rn)$ in $L^2(\rn)$.
If $\gam =0$ then (\ref{eref}) gives
\[   ||T_{+}^{(1-n)/2 +i\gam}f||_2=  c_n\,  ||f||_2, \qquad c_n= 2^{n/2} \pi^{(n-1)/2}.\]
Hence the operator $c_n^{-1}\T^{(1-n)/2}_{+}:  L^2(\rn) \to L^2(\rn)$ is an isometry.
To show that it is  unitary, it remains to note that it is ``onto''. Indeed, by (\ref{ccab}), if $ f \in \Phi(\rn)$, then
\[ f= (2\pi)^{1-n} T_{+}^{(1-n)/2}   \stackrel{*}{T}\!{}_{+}^{(1-n)/2} f.\]
where, by (\ref{tocd}),
\[
|| \stackrel{*}{T}\!{}_{+}^{(1-n)/2} f||_2= || \stackrel{*}{T}\!{}_{+}^{(1-n)/2} \dot f||_2 =c_n\,  ||\dot f||_2 = c_n\,  ||f||_2.\]
Because $\Phi(\rn)$ is dense in  $L^2(\rn)$  (see  Proposition \ref{izsh}), the result follows.
\end{proof}

\begin{remark} The last statement of Lemma \ref{ing} about unitarity of the operator $c_n^{-1}\T_+^{(1-n)/2}$ is not needed for interpolation. However, it is of interest on its own right. In the close situation, operators of this kind occur in inverse problems for the wave equation; cf. \cite [Proposition 1]{BK}.
\end{remark}

Now we consider the case
 $\a=1 +i\gam$, $\gam \in \bbr$, when
\[
(T^{1 +i\gam}_{+} f)(x)  \frac{1}{\Gam (1 +i\gam )} \intl_{\rn} (x_n -y_n)_{+}^{i\gam}\, f(y', y_n + x' \cdot y')\, dy.\]
Note that
\[
|\Gam (1 +i\gam )|^2 =\frac{\pi \gam}{\sinh (\pi \gam)};\]
see, e.g., \cite{AS}. This gives  the following statement.

\begin{lemma} \label {ingq} For any $\gam \in \bbr$ and $f \in L^1(\rn)$,
\be\label{krtts}
||T^{1 +i\gam}_{+} f||_{\infty} \le e^{\pi |\gam|/2}\,  ||f||_1.   \ee
\end{lemma}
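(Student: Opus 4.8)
The plan is to obtain the bound by a direct pointwise estimate of $(T^{1+i\gam}_{+}f)(x)$ together with an elementary estimate of the Gamma factor; no interpolation or Fourier analysis is needed here. First I would note that the kernel is unimodular on its support: by the definition $(X)_+^{\lam}=|X|^\lam$ for $X>0$ and $0$ otherwise, one has
\[
|(x_n-y_n)_+^{i\gam}| = \mathbf 1_{\{x_n>y_n\}} \le 1,
\]
since $|x_n-y_n|^{i\gam}=\exp (i\gam \log|x_n-y_n|)$ has modulus $1$ wherever $x_n>y_n$. Hence, for every $x\in\rn$,
\[
|(T^{1+i\gam}_{+}f)(x)| \le \frac{1}{|\Gam(1+i\gam)|}\intl_{\rn} |f(y', y_n + x'\cdot y')|\, dy.
\]

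Next I would evaluate the integral on the right. For each fixed $y'$ the inner integration in $y_n$ is the translation $y_n\mapsto y_n + x'\cdot y'$, which preserves Lebesgue measure, so by Fubini
\[
\intl_{\rn} |f(y', y_n + x'\cdot y')|\, dy = \intl_{\bbr^{n-1}}\!\intl_{\bbr} |f(y', s)|\, ds\, dy' = ||f||_1,
\]
independently of $x$. For $f\in L^1(\rn)$ the integrand is dominated by $|f(y', y_n+x'\cdot y')|$, whose integral is finite, so the integral converges absolutely for every $x$ and the manipulation is legitimate. Taking the supremum over $x$ gives $||T^{1+i\gam}_{+}f||_\infty \le |\Gam(1+i\gam)|^{-1}\, ||f||_1$.

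It remains to estimate the constant. Using the identity $|\Gam(1+i\gam)|^2 = \pi\gam/\sinh(\pi\gam)$ recorded just above the lemma, the desired inequality reduces to
\[
\frac{1}{|\Gam(1+i\gam)|^2} = \frac{\sinh(\pi\gam)}{\pi\gam} \le e^{\pi|\gam|}.
\]
Writing $s=\pi|\gam|\ge 0$ and using that $\sinh(s)/s$ is even, this is equivalent to $\sinh(s)\le s\,e^{s}$, i.e., after dividing by $e^s$, to $1-e^{-2s}\le 2s$, which is immediate from $e^{-2s}\ge 1-2s$. This last elementary inequality is really the only nontrivial point; the operator-theoretic content of the lemma is contained entirely in the unimodularity of the kernel and the measure-preserving change of variables, so I do not expect any genuine obstacle.
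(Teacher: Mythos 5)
Your proof is correct and is essentially the argument the paper intends: the paper's own (very terse) proof consists of displaying the integral formula for $T^{1+i\gam}_{+}f$, noting the identity $|\Gam(1+i\gam)|^2=\pi\gam/\sinh(\pi\gam)$, and leaving to the reader exactly the three details you supply — the unimodularity of $(x_n-y_n)_{+}^{i\gam}$ on its support, the measure-preserving change of variables giving $\|f\|_1$, and the elementary bound $\sinh(s)/s\le e^{s}$. No discrepancy to report.
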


\subsection{The main theorem}

Below we apply Theorem \ref{lanse1} to the operator family $\{T_+^\a\}$.

\begin {theorem} \label {lanseT} Suppose $1\le p,q \le \infty$, $\a_0=Re\, \a$.  The operator $T_+^\a$, initially defined by (\ref{bft}) on functions $f \in S (\rn)$, extends as a linear bounded operator $\T_+^\a$ from $L^p (\rn)\!$ to $L^q (\rn)\!$ if and only if
\be\label {wz2asTT}
\frac{1-n}{2} \le \a_0 \le 1, \qquad p=\frac{n+1}{n + \a_0}, \qquad q=\frac{n+1}{1-\a_0}.\ee
\end{theorem}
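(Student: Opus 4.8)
The plan is to prove sufficiency and necessity separately: for the ``if'' part I would apply the interpolation Theorem~\ref{lanse1} to a reparametrization of $\{T_+^\a\}$ whose two edge estimates are exactly Lemmas~\ref{ing} and~\ref{ingq}; for the ``only if'' part I would read off the relations among $p,q,\a_0$ from dilation covariance and the admissible range of $\a_0$ from the constraint $p\le q$ forced by translation invariance in $x_n$.

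\smallskip
\noindent\emph{Sufficiency.} Introduce $T_z=T_+^{\a(z)}$, $z\in{\bf \bar S}$, via the affine substitution $\a(z)=\tfrac{1-n}{2}+\tfrac{n+1}{2}\,z$, so that $z=0$ corresponds to $\a=(1-n)/2$ and $z=1$ to $\a=1$. Lemma~\ref{ing} gives the left-edge bound $\|T_{i\gam}f\|_2\le c_n\,e^{\pi(n+1)|\gam|/4}\|f\|_2$ and Lemma~\ref{ingq} the right-edge bound $\|T_{1+i\gam}f\|_\infty\le e^{\pi(n+1)|\gam|/4}\|f\|_1$, so hypothesis {\rm\bf(D)} holds with $(p_0,q_0)=(2,2)$, $(p_1,q_1)=(1,\infty)$ and $M_0(\gam)=M_1(\gam)=e^{\pi(n+1)|\gam|/4}$, which obey (\ref{wzed1}). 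Hypotheses {\rm\bf(A)} and {\rm\bf(B)} follow from Definition~\ref{exte} and Lemma~\ref{sideT}(i): for $\vp\in C_c^\infty(\rn)\subset S(\rn)$ the value $(T_+^\a\vp)(x)$ is entire in $\a$, hence $z\mapsto\int_{\rn}(T_z\vp)\psi$ is analytic on ${\bf S}$ and continuous on ${\bf \bar S}$. For {\rm\bf(C)} I would use the continuation formula (\ref{oqa2rt}): choosing $\ell\in\bbn$ with $\Re\,\a(z)+\ell>0$ throughout the strip, one has $T_z\vp=T_+^{\a(z)+\ell}\partial_n^{\ell}\vp$, an absolutely convergent integral carrying the factor $1/\Gam(\a(z)+\ell)$, which by the Stirling asymptotics of $\Gam$ on vertical lines grows at most like $e^{\pi|\Im\,\a(z)|/2}$ times a power of $|\Im z|$; thus $\log\|T_z\vp\|_{L^s(K)}$ is $O(|\Im z|)$, dominated by $e^{\gam|\Im z|}$ for any $\gam\in(0,\pi)$. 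With all four hypotheses verified, (\ref{wzed2}) yields $\|T_\theta f\|_q\le B_\theta M_\theta\|f\|_p$ for $f\in C_c^\infty(\rn)$; here $\theta=(2\a_0+n-1)/(n+1)$, and substituting the endpoints into (\ref{wzee1}) reproduces exactly $p=(n+1)/(n+\a_0)$ and $q=(n+1)/(1-\a_0)$. The constant $B_\theta M_\theta$ is finite since the interpolation kernel decays like $e^{-\pi|\gam|}$ while $\log M_i(\gam)$ grows linearly. Density of $C_c^\infty(\rn)$ in $L^p(\rn)$ then gives the bounded extension $\T_+^\a$.

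\smallskip
\noindent\emph{Necessity.} Assume $\T_+^\a\colon L^p(\rn)\to L^q(\rn)$ is bounded and not identically zero. Writing, after the shear $w_n\mapsto w_n+x'\cdot w'$, $(T_+^\a f)(x)=\tfrac{1}{\Gam(\a)}\int_{\rn}(x_n-w_n+x'\cdot w')_+^{\a-1}f(w)\,dw$, the anisotropic dilation $f_{a,b}(x)=f(ax',bx_n)$ satisfies the covariance identity $(T_+^\a f_{a,b})(x)=a^{\,1-n}b^{-\a}(T_+^\a f)\big(\tfrac{b}{a}x',bx_n\big)$, valid first for $\Re\,\a>0$ and then for all $\a$ by analytic continuation. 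Computing $\|T_+^\a f_{a,b}\|_q$ and $\|f_{a,b}\|_p$ and demanding that $\|T_+^\a f_{a,b}\|_q\le C\|f_{a,b}\|_p$ hold with $C$ independent of $a,b>0$ forces the exponents of $a$ and of $b$ to balance, which yields the two relations $1/p+1/q=1$ and $1/p=\a_0+n/q$; these are equivalent to $p=(n+1)/(n+\a_0)$ and $q=(n+1)/(1-\a_0)$. The constraint $p\ge1$ then gives $\a_0\le1$. For the lower bound I would exploit that $T_+^\a$ commutes with the translations $\tau_h\colon g(x',x_n)\mapsto g(x',x_n-h)$ in the last variable. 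Taking $N$ widely separated translates of a fixed $f\in C_c^\infty(\rn)$, disjointness of supports gives $\|\sum_{j}\tau_{h_j}f\|_p=N^{1/p}\|f\|_p$, while $\|\sum_j\tau_{h_j}\T_+^\a f\|_q\to N^{1/q}\|\T_+^\a f\|_q$ as the gaps tend to infinity (for $q<\infty$). Boundedness then gives $N^{1/q}\|\T_+^\a f\|_q\le C\,N^{1/p}\|f\|_p$, i.e. $\|\T_+^\a f\|_q\le C\,N^{1/p-1/q}\|f\|_p$ for every $N$; were $p>q$, letting $N\to\infty$ would force $\T_+^\a=0$, a contradiction. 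Hence $p\le q$, equivalently $\a_0\ge(1-n)/2$, the endpoint $q=\infty$, $\a_0=1$ being trivial.

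\smallskip
\noindent\emph{Main obstacle.} I expect the two delicate points to be the verification of hypothesis {\rm\bf(C)} for the analytically continued family---controlling the growth of $T_z\vp$ in $\Im z$ through the Stirling behaviour of $1/\Gam$ and the representation (\ref{oqa2rt})---and, on the necessity side, the limiting step in the translation argument, where one must justify $\|\sum_j\tau_{h_j}\T_+^\a f\|_q\to N^{1/q}\|\T_+^\a f\|_q$ for an $L^q$-function with noncompact support. This asymptotic disjointness of one-directional translates is precisely what converts the merely partial translation invariance of $T_+^\a$ into the sharp lower bound $\a_0\ge(1-n)/2$.
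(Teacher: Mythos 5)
Your proposal is correct, and its sufficiency half, together with the scaling part of the necessity, is essentially the paper's own argument: the same reparametrization $\a(z)=\frac{1-n}{2}+\frac{n+1}{2}\,z$, the same edge estimates (Lemmas \ref{ing} and \ref{ingq}), the same verification of {\rm\bf(A)}--{\rm\bf(D)} via $T_+^{\a}\vp=T_+^{\a+\ell}\partial_n^{\ell}\vp$ and the Stirling behaviour of $1/\Gam$, and the same two-parameter dilation covariance (your $f_{a,b}$ identity is precisely the paper's $T_{+}^\a A_\lam f = B_\lam T_{+}^\a f$). Where you genuinely diverge is the lower bound $\a_0\ge(1-n)/2$. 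The paper proves it by contradiction inside the interpolation framework: if $T_+^{\a_0}$ were bounded for some $\a_0<(1-n)/2$, interpolating the hypothetical triple against the endpoint $(1,\infty,1)$ would make $T_+^{(1-n)/2}$ bounded with exponents $\left(\tfrac{2(1+n)}{1+n-2\e},\tfrac{2(1+n)}{1+n+2\e}\right)\neq(2,2)$, contradicting the scaling-forced uniqueness of $(p,q)$ at $\a=(1-n)/2$. You instead use the H\"ormander translation principle: $T_+^\a$ commutes with translations in the $x_n$-variable, and a nonzero operator with this property cannot be bounded from $L^p$ to $L^q$ when $p>q$; given the scaling relations, $p\le q$ is exactly $\a_0\ge(1-n)/2$. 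Both arguments are sound. Yours is more elementary and structural---it isolates the general impossibility of exponent-decreasing translation-commuting operators and avoids invoking Theorem \ref{lanse1} a second time on a hypothetical estimate---while the paper's stays entirely within the interpolation machinery it set out to document. The delicate step in your route, which you correctly flag, is the asymptotic disjointness $\|\sum_j\tau_{h_j}g\|_q^q\to N\|g\|_q^q$ for fixed $N$ as the gaps grow; this requires $q<\infty$, which is automatic in the regime $p>q$ because there $q<2$. Two small additions would make it airtight: justify the standing assumption that $\T_+^\a$ is not identically zero by noting that $T_+^\a$ never vanishes on $S(\rn)$ (for instance, it is an automorphism of $\Phi(\rn)$, so no bounded extension can be the zero operator); and for $\Im\,\a\neq0$ complete the sufficiency as the paper does, by shifting the parametrization to $T_z=T_+^{\,\a(z+it)}$.
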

\begin{proof}

The proof consists of two parts  related to the ``if'' statement and the ``only if'' statement, respectively.

\vskip 0.2 truecm

\noindent {\bf Part I.}
We define
\be\label {wzeda}  T_z= T_{+}^{\,\a (z)}, \qquad \a(z)=\frac{1+n}{2} z + \frac{1-n}{2}.\ee
Then $0\le Re\, z \le 1$ corresponds to $(1-n)/2 \le Re\, \a \le 1$. In our case,
\[p_0=q_0=2, \qquad p_1=1, \qquad q_1=\infty.\]
If for real  $\a \in [(1-n)/2, 1]$ we set $\a=((1+n)/2) \theta + (1-n)/2$,  then $\theta=(2\a+n-1)/(n+1)$, and (\ref{wzee1}) yields
\[ 1/p= (\a +n)/(n+1), \qquad 1/q=(1-\a)/(n+1).\]
The latter agrees with (\ref{wz2asTT}).

To apply  Theorem \ref {lanse1}, we must show, step by step,  that the operator family $T_z= T_{+}^{\,\a (z)}$ meets  the conditions  {\rm \bf(A)}-{\rm \bf(D)}. This important technical part of the proof relies on the material of the previous sections.

\vskip 0.2 truecm

\noindent {\rm \bf(A)}
Let $\vp \in C_c^\infty (\rn)$. Fix any integer $\ell > (n-1)/2$. Then for all $(1-n)/2 \le Re\, \a  \le 1$,
\[
(T_{+}^{\a} \vp)(x)=  (T_{+}^{\,\a +\ell} \partial_n^\ell  \vp)(x)=\frac{1}{\Gam (\a +\ell)}\intl_0^\infty \!s ^{\a+\ell -1} A^{(\ell)}_{x} (s)\,ds,\]
where, by (\ref{99jh1}),
\[
 A^{(\ell)}_{x} (s)=\intl_{\bbr^{n-1}} \!\!\!  (\partial_n^\ell  \vp) (y', x_n - s + x' \cdot y') \, dy'= (T\partial_n^\ell \vp)(x', x_n-s)  .\]
Hence, by (\ref{pari}), for sufficiently large $m\in \bbn$ there is a constant $c_m$ such that
\[|A^{(\ell)}_{x} (s)| \le c_m \,\frac{(1+|x'|^2)^m}{(1+|x_n-s|)^m}\le c_m \,\frac{(1+|x'|^2)^m \,  (1+|x_n|)^m}{(1+s)^m}.\]
 It follows that
 \bea
 &&|(T_{+}^{\a} \vp)(x)| \le \frac{ c_m \,(1+|x'|^2)^m \,  (1+|x_n|)^m}{|\Gam (\a +\ell)|}\intl_0^\infty \!\frac{s ^{\a+\ell -1}}{(1+s)^m}\, ds\nonumber\\
\label {fint} &&=
 \frac{\tilde c_m}{|\Gam (\a \! +\!\ell)|} \,(1\!+\!|x'|^2)^m   (1\!+\!|x_n|)^m, \qquad \tilde c_m \!=\!\const,\eea
if $m$ is large enough. Hence   $T_{+}^{\a} \vp \in  L^1_{loc} (\rn)$ for all $(1-n)/2 \le Re\, \a  \le 1$.

\vskip 0.2 truecm

\noindent {\rm \bf(B)}  For any $\vp, \psi \in C_c^\infty (\rn)$, we have
\[
\A (z)=\intl_{\rn}  (T_z \vp)(x) \psi (x)\, dx=\intl_{{\rm supp} (\psi)} \!\!\!F (x,z)\,dx,\]
where
\[ F (x,z)= (T_z \vp)(x) \psi (x)= (T_{+}^{\,\a (z)} \vp)(x)\psi (x)\]
is an entire function of $z$  (see Lemma \ref{sideT}(i)), which is locally integrable in the $x$-variable for each $z$ in any compact subset of $\bf \bar S$; see {\rm \bf(A)}.
 Using known facts about analyticity of functions represented by integrals (see, e.g., \cite [Lemma 1.3]{Ru15}), we obtain  {\rm \bf(B)}.

\vskip 0.2 truecm

\noindent {\rm \bf(C)}
We make use of the (\ref {fint}), which gives
\be\label{kanzz1}
||T_{+}^{\,\a} \vp||_{L^\infty (K)}\le \frac{C}{|\Gam (\a +\ell)|}, \qquad C=C (K, \vp).\ee

Now, let us revert to the notation of the interpolation theorem.
If $\a=\a (z)$ and $T_z \vp =T_{+}^{\,\a (z)} \vp$, then for all $0\le Re\, z \le 1$,  (\ref{kanzz1})  gives
\[
 ||T_z \vp||_{L^\infty (K)}\le \frac{C}{|\Gam (\z)|}, \qquad \z=\frac{1+n}{2} z + \frac{1-n}{2} +\ell.\]
If $z=x+iy$, then $\z=a+ib$, where
\[
 a= \frac{1+n}{2}\, x + \frac{1-n}{2} +\ell, \qquad b=\frac{1+n}{2} \,y; \qquad 0\le x\le 1.\]
It is known that if  $a_1\le a\le a_2$ and $|b| \to \infty$, then
\be\label {quad}
|\Gam (\z)|= \sqrt {2\pi} \, |b|^{a-1/2}\,  e^{-\pi |b|/2}\,  [1 +O (1/|b|)],\ee
where the constant implied by $O$ depends only on $a_1$ and $a_2$; see, e.g., \cite [Corollary 1.4.4]{AAR}.
Taking into account that $1/\Gam (\z)$ is an entire function and using (\ref{quad}), after simple calculations we obtain an estimate of the form
\[ \log ||T_z \vp||_{L^{\infty} (K)}\le \left \{
\begin{array} {ll} \! c_1&\mbox{ if $|y|\le 10$},\\
c_2 +c_3 \log |y| +c_4 |y|&\mbox{ if $|y|\ge 10$},\\
 \end{array}
\right.\]
for some positive constants $c_1, c_2, c_3, c_4$.
This estimate yields (\ref {wzws2as1})  for any $\gam >0$. Thus
 verification of the assumption {\rm \bf(C)} in Theorem \ref {lanse1} is complete.

\vskip 0.2 truecm

\noindent {\rm \bf(D)}
Let us check (\ref{wzed1}). By (\ref{wzeda}),
\[  T_{i\gam}=T_{+}^{(1-n)/2 +i\gam (1+n)/2}, \qquad  T_{1+i\gam}=T_{+}^{1 +i\gam (1+n)/2}.\]
Hence the results of Lemmas \ref{ing} and \ref{ingq} can be stated as
\[ ||T_{i\gam}\vp ||_{2} \le B_0 M_0 (\gam) ||\vp||_{2}, \qquad ||T_{1+i\gam} \vp||_{\infty} \le B_1 M_1 (\gam) ||\vp||_{1}, \]
\[ M_0 (\gam)=M_1 (\gam)=\exp (\pi (1+n)|\gam|/4),\]
with some constants $B_0, B_1$. These estimates give (\ref{wzed1}).

Thus, by Theorem \ref {lanse1}, the ``if'' part of Theorem \ref{lanseT} is proved for $T^\a_{+}$ when $\a$ is real.
If $\a$ has a nonzero  imaginary part, say, $t$, the above reasoning can be repeated almost verbatim if we re-define $T_z$ in (\ref{wzeda})  by setting $T_z= T_+^{\,\a (z+it)}$.

\vskip 0.2 truecm

\noindent {\bf Part II.}
  Let us prove the ``only if'' part.
First we show that the left bound $Re \, \a= (1-n)/2$ in (\ref{wz2asTT}) is sharp.
Suppose the contrary,  assuming for simplicity that $\a$ is real.  Then there is  a triple $(p_0, q_0, \a_0)$ with $1\le p_0< q_0 \le \infty$ and $\a_0<(1-n)/2$, such that   $T^{\a_0}_{+}$  is bounded from  from $L^{p_0}(\rn)$ to $L^{q_0}(\rn)$.
Interpolating the  triples  $(p_0, q_0, \a_0)$ and $(1, \infty, 1)$, as we did above, we conclude that for any $\a \in [\a_0, 1]$, the operator $T^\a_{+}$ is bounded   from $L^{p_\a}(\rn)$ to $L^{q_\a}(\rn)$, where
\[ \frac{1}{p_\a}= \frac{\a -\a_0}{ 1 -\a_0} + \frac{1-\a }{ p_0 (1 -\a_0)}, \qquad  \frac{1}{q_\a}=  \frac{1-\a }{ q_0 (1 -\a_0)}.\]
 In particular, for $\a= (1-n)/2$ and $\a_0=(1-n)/2 -\e$, $0<\e< (1+n)/2$, we obtain
\[ p_{(1-n)/2}=\frac{2(1+n)}{1+n -2\e}, \qquad q_{(1-n)/2}=\frac{2(1+n)}{1+n +2\e}.\]
The latter agrees with the known $L^2$-$L^2$ boundedness of $T^{(1-n)/2}_{+}$ only if $\e=0$, that is, $\a_0=(1-n)/2$.

The necessity of $p$ and $q$ in (\ref{wz2asTT}) can be proved using the  scaling argument.
 Specifically, suppose, for example, that $\a$ is real.   Abusing notation, let $\lam =(\lam_1, \lam_2)$, $\lam_1 >0$, $\lam_2 >0$, and denote
\[
(A_\lam f)(x)=f (\lam_1 x', \lam_2 x_n), \qquad (B_\lam F)(x)=\frac{\lam_1^{1-n}}{\lam_2^\a } F\left (\frac{\lam_2}{\lam_1}\, x', \lam_2 x_n\right ). \]
Then $T_{+}^\a A_\lam f = B_\lam T_{+}^\a f$. This equality is straightforward if $\a>0$ and extends to all $\a\in\bbc$ by analyticity.
We have
\[ ||A_\lam f||_p=\lam_1^{(1-n)/p}\lam_2^{-1/p} ||f||_p, \qquad ||B_\lam F||_q=\lam_1^{1-n+(n-1)/q}\lam_2^{-\a-n/q} ||F||_q.\]
If $|| T^\a_{+} f||_q \le c \,||f||_p$ is true for all $f\in L^p$, then it is true for $A_\lam f$, that is, $||T^\a_{+} A_\lam f||_{q}\le c \,||A_\lam f||_{p}$ or
$||B_\lam T_{+}^\a f||_{q}\le c \,||A_\lam f||_{p}$.
 The latter is equivalent to
\[
\lam_1^{1-n+(n-1)/q}\lam_2^{-\a-n/q} ||T_{+}^\a f||_q   \le c \,\lam_1^{(1-n)/p}\lam_2^{-1/p} ||f||_p.\]
Assuming that $\lam_1 $ and $\lam_2 $ tend to zero and to infinity, we conclude that the last inequality is possible only if
\be\label {hghfv}
p=\frac{n+1}{n + \a}, \qquad q=\frac{n+1}{1- \a}.\ee
The above  reasoning  shows that the right bound $Re \,\a= 1$ is sharp, too.

Now, the proof of  Theorem \ref{lanseT} is complete.
\end{proof}

\begin{theorem}\label {o9iu4}  Let   $p$ and $q$  be defined by
\be\label {wz2asm}
 p=\frac{n+1}{n + Re \, \a}, \qquad q=\frac{n+1}{1- Re \, \a},\ee
  and let $f \in L^p (\rn)$.  In the cases $0<Re\, \a <1$ and  $\a=0$, the $L^q$-functions $(\T_{+}^{\,\a} f)(x)$ and $(\T f)(x)=(\T_{+}^{\,\a} f)(x)|_{\a =0}$, the existence of which is guaranteed by Theorem \ref{lanseT},
coincide a.e.  with  $(T_{+}^{\,\a} f)(x)$ and $(T f)(x)$, respectively.
\end{theorem}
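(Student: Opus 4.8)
The plan is to identify the abstract interpolation extension $\T_{+}^{\,\a}$ with the concrete absolutely convergent integral $T_{+}^{\,\a}$ by a single approximation argument. The two objects agree on $S(\rn)$ by the very construction of $\T_{+}^{\,\a}$ in Theorem \ref{lanseT} (the extension of the operator (\ref{bft})), so the whole issue is to pass to the limit from $S(\rn)$ to $L^p(\rn)$ in two different topologies simultaneously and match the limits.

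First I fix $\a$ with $\a_0=\Re\a\in(0,1)$; the case $\a=0$ is handled identically, with Theorem \ref{o9iu2t} replacing Theorem \ref{o9iu2}. For $p=(n+1)/(n+\a_0)$ one checks the elementary inequality $p<n/(n-1+\a_0)$ (equivalent to $\a_0<1$), so Theorem \ref{o9iu2} is applicable. Since $|(x_n-y_n)_{+}^{\a-1}|=(x_n-y_n)_{+}^{\a_0-1}$, we have the pointwise domination $|(T_{+}^{\,\a}f)(x)|\le (\Gam(\a_0)/|\Gam(\a)|)\,(T_{+}^{\,\a_0}|f|)(x)$, and hence Theorem \ref{o9iu2} gives $||T_{+}^{\,\a}f||_{p,u}\le c\,||f||_{p}$ with the weight $u(x)=|x_n|^\nu(1+|x'|^2)^{-\mu/2}$ from (\ref{op98yoN}). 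In particular $(T_{+}^{\,\a}f)(x)$ converges absolutely for a.e.\ $x$ and $T_{+}^{\,\a}$ is a bounded linear map $L^p(\rn)\to L^p_u(\rn)$.

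Now choose $f_j\in S(\rn)$ with $f_j\to f$ in $L^p(\rn)$. By Theorem \ref{lanseT}, $\T_{+}^{\,\a}f_j\to\T_{+}^{\,\a}f$ in $L^q(\rn)$, so after passing to a subsequence we may assume $\T_{+}^{\,\a}f_j\to\T_{+}^{\,\a}f$ a.e. By the weighted estimate of the previous paragraph, $||u\,(T_{+}^{\,\a}f_j-T_{+}^{\,\a}f)||_{p}\le c\,||f_j-f||_{p}\to 0$, so $u\,T_{+}^{\,\a}f_j\to u\,T_{+}^{\,\a}f$ in $L^p(\rn)$; passing to a further subsequence and dividing by $u$, which is finite and strictly positive off the null set $\{x_n=0\}$, we obtain $T_{+}^{\,\a}f_j\to T_{+}^{\,\a}f$ a.e. Along this common subsequence $\T_{+}^{\,\a}f_j=T_{+}^{\,\a}f_j$ because $f_j\in S(\rn)$, and letting $j\to\infty$ yields $\T_{+}^{\,\a}f=T_{+}^{\,\a}f$ a.e., as claimed.

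The main obstacle is not the approximation itself but the reconciliation of the two distinct target geometries: the interpolation of Theorem \ref{lanseT} controls the limit only in $L^q(\rn)$, whereas the only estimate available for the absolutely convergent integral lives in the weighted space $L^p_u(\rn)$. The observation that unlocks the proof is that each of these norms \emph{separately} forces a.e.\ convergence along a subsequence --- for $L^p_u$ this uses only that $u$ is finite and positive almost everywhere --- so the two limits can be matched on a common subsequence where the operators coincide verbatim. An alternative route through the duality $\langle T_{+}^{\,\a}f,\psi\rangle=\langle f,\stackrel{*}{T}\!{}_{+}^\a\psi\rangle$ for $\psi\in C_c^\infty(\rn)$ is also feasible, but it requires a separate Fubini justification together with the membership $\stackrel{*}{T}\!{}_{+}^\a\psi\in L^{p'}(\rn)$, so the subsequence argument is the cleaner path.
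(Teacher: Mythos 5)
Your proposal is correct and takes essentially the same route as the paper: both combine the $L^p$-$L^q$ boundedness of $\T_{+}^{\,\a}$ from Theorem \ref{lanseT} with the weighted estimate for the absolutely convergent integral from Theorem \ref{o9iu2} (Theorem \ref{o9iu2t} when $\a=0$), approximate $f$ by Schwartz functions on which the two operators coincide, and match the two limits. The only differences are cosmetic: the paper matches the limits by a norm comparison in $L^1(K)$ for compact $K$ away from $\{x_n=0\}$ instead of your a.e.-convergent subsequences, and it simply reduces to real $\a$ where you make the pointwise domination $|T_{+}^{\,\a}f|\le (\Gam(\a_0)/|\Gam(\a)|)\,T_{+}^{\,\a_0}|f|$ explicit.
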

\begin{proof} In the case $0<Re\, \a <1$, it suffices to assume that $\a$  is real. Because
\[p=(n+1)/(n + \a) < n/(n-1 +\a),\]
the operator $ T_{+}^{\,\a}$ obeys the conditions of Theorem \ref{o9iu2}.
 Thus, by  Theorems  \ref{lanseT} and \ref{o9iu2}, we have two linear bounded operators
\[\T_{+}^{\,\a}:   L^p (\rn) \to L^q (\rn), \qquad  T_{+}^{\,\a}:   L^p (\rn) \to L^p_u(\bbr^n),\]
where $u(x) = |x_n|^\nu (1+|x'|^2)^{-\mu/2}$.
Hence both operators are bounded from  $L^p (\rn)$ to $L^1 (K)$ for any compact set $K \subset \rn$ away from the hyperplane  $x_n=0$. Let $\{f_k\} \subset S(\rn)$ be a sequence approximating $f$ in the $L^p$-norm.
Because $\T_{+}^{\,\a}$ was defined as an extension of $T_{+}^{\,\a}$ from $S(\rn)$,     $(\T_{+}^{\,\a} f_k)(x)= (T_{+}^{\,\a} f_k)(x)$. Then, by linearity,
\[
\T_{+}^{\,\a} f - T_{+}^{\,\a} f = \T_{+}^{\,\a} (f -f_k) - T_{+}^{\,\a} (f -f_k),\]
and therefore
\bea
||\T_{+}^{\,\a} f - T_{+}^{\,\a} f||_{L^1 (K)} &\le& ||\T_{+}^{\,\a} (f -f_k)||_{L^1 (K)} +||T_{+}^{\,\a} (f -f_k)||_{L^1 (K)}\nonumber\\
&\le& c_1 ||f -f_k||_p + c_2  ||f -f_k||_p\nonumber\eea
 for some constants $c_1$ and $c_2$ depending on $K$. Assuming $k\to \infty$, we obtain $||\T_{+}^{\,\a} f - T_{+}^{\,\a} f||_{L^1 (K)}=0$. This gives $(\T_{+}^{\,\a} f)(x)=(T_{+}^{\,\a} f)(x)$ for almost all
 $x\in K$, and therefore for almost all  $x\in \rn$.

If $\a=0$, a similar  reasoning  relies on   Theorems  \ref{lanseT} and \ref {o9iu2t}.
\end{proof}

\section {$L^p$-$L^q$ estimates of $R_{+}^{\a}f$ }

Theorem \ref{lanseT} yields the following result for the operator $R_+^\a$.
\begin {theorem} \label {lanseTR} Suppose $1\le p,q \le \infty$, $\a_0=Re\, \a$.  The operator $R_+^\a$, initially defined by (\ref{bftr}) on functions $f \in S (\rn)$, extends as a linear bounded operator $\R_+^\a$ from $L^p (\rn)\!$ to $L^q (Z_n)\!$ if
\be\label {wz2asr}
\frac{1-n}{2} \le \a_0 \le 1, \qquad p=\frac{n+1}{n + \a_0}, \qquad q=\frac{n+1}{1-\a_0}.\ee
\end{theorem}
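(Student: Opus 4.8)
The plan is to reduce everything to the boundedness of $\T_+^\a$ already established in Theorem \ref{lanseT}, exploiting the tight link between $R_+^\a$ and $T_+^\a$ recorded in Lemma \ref{ordi}. First I would fix $f\in S(\rn)$ and compute the $L^q(Z_n)$-norm of $R_+^\a f$ directly from the transfer identity (\ref{tae1}). Applying that identity with the weight parameter $\nu=0$ and with the integrability exponent taken to be $q$ (legitimate whenever $q<\infty$, and $\nu=0$ is allowed since $\nu$ is an arbitrary real number) expresses $\|R_+^\a f\|^{\sim\,q}_{q}$ as a constant multiple of the weighted integral $\int_{\rn}(1+|x'|^2)^{-(n+(\a_0-1)q+1)/2}\,|(T_+^\a f)(x)|^q\,dx$. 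Here $\a_0=\Re\a$ enters, rather than $\a$ itself, because the prefactor in the identity $(R_+^\a f)(\theta,t)=(1+|x'|^2)^{(1-\a)/2}(T_+^\a f)(x)$ established in the proof of Lemma \ref{ordi} contributes only through its modulus.

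The decisive observation is that for the specific exponent $q=(n+1)/(1-\a_0)$ prescribed in (\ref{wz2asr}) one has $(\a_0-1)q=-(n+1)$, so the weight exponent $(n+(\a_0-1)q+1)/2$ collapses to zero and the weight disappears entirely. Thus $\|R_+^\a f\|^{\sim\,q}_{q}=2\,\|T_+^\a f\|_q^q$, and Theorem \ref{lanseT} yields $\|R_+^\a f\|^{\sim}_{q}\le 2^{1/q}\,c\,\|f\|_p$ with $p=(n+1)/(n+\a_0)$. The endpoint $\a_0=1$ (where $q=\infty$ and $p=1$) is handled separately but even more easily: the same pointwise relation gives $|(R_+^\a f)(\theta,t)|=(1+|x'|^2)^{(1-\a_0)/2}|(T_+^\a f)(x)|=|(T_+^\a f)(x)|$, since the prefactor is identically $1$ when $\a_0=1$, whence $\|R_+^\a f\|^{\sim}_{\infty}=\|T_+^\a f\|_\infty\le c\,\|f\|_1$ by Theorem \ref{lanseT}. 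Finally I would pass from $S(\rn)$ to all of $L^p(\rn)$: since $(1-n)/2\le\a_0\le 1$ forces $n+\a_0>(n+1)/2>0$, the exponent $p=(n+1)/(n+\a_0)$ is always finite, so $S(\rn)$ is dense in $L^p(\rn)$ and the uniform estimate extends $R_+^\a$ to a bounded operator $\R_+^\a:L^p(\rn)\to L^q(Z_n)$ by continuity.

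There is no serious analytic obstacle here; the entire content sits in the exponent bookkeeping that makes the weight trivial, after which one simply transports the $T_+^\a$ estimate across the change-of-variables identity (\ref{tae1}). The only points requiring care are the appearance of $\Re\a$ rather than $\a$ in the weight when $\a$ is complex, and the separate (if entirely elementary) treatment of the $q=\infty$ endpoint. It is worth emphasizing that this argument produces only the \emph{if} direction: unlike Theorem \ref{lanseT}, there is no matching \emph{only if} statement, because the target now lives on the cylinder $Z_n$ and, as discussed in the Introduction, an interpolation theorem accommodating distinct source and target spaces is not presently available.
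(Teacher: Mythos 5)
Your route is the same as the paper's: transfer the estimate to $T_+^\a$ via Lemma \ref{ordi}, choose $\nu=0$ and the exponent $q=(n+1)/(1-\a_0)$ so that $(\a_0-1)q=-(n+1)$ makes the weight in (\ref{tae1}) collapse, conclude $\|R_+^\a f\|^{\sim}_q=2^{1/q}\|T_+^\a f\|_q$, and then invoke Theorem \ref{lanseT} and density of $S(\rn)$ in $L^p(\rn)$. Your exponent bookkeeping is correct, your explicit treatment of the endpoint $\a_0=1$, $q=\infty$ is fine (the paper leaves that endpoint implicit, and there $Re\,\a=1>0$, so Lemma \ref{ordi} applies directly), and your closing remark about the absence of an ``only if'' part matches the paper's own observation.

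There is, however, a gap at the decisive step. Lemma \ref{ordi} --- including the identity (\ref{tae1}) you apply --- is stated under the hypothesis that either side ``exists in the Lebesgue sense,'' and its proof manipulates the absolutely convergent integrals defining $R_+^\a f$ and $T_+^\a f$; this requires $Re\, \a>0$. On the range $(1-n)/2\le \a_0\le 0$, which is the substantial part of the theorem, both integrals diverge and the operators are defined only by analytic continuation (Definition \ref{exte}), so you cannot simply ``apply (\ref{tae1})'' there. What is missing --- and what constitutes the core of the paper's proof --- is the verification that the pointwise relation (\ref{4A5i}), i.e.\ $(R_+^\a f)(\theta,t)=|\theta_n|^{\a-1}\,(T_+^\a f)\!\left(-\theta'/\theta_n,\,t/\theta_n\right)$, survives analytic continuation: for fixed $f\in S(\rn)$ and fixed $(\theta,t)$ with $\theta_n\neq0$, both sides are entire in $\a$ (Lemmas \ref{sideTr} and \ref{sideT}) and agree for $Re\, \a>0$, hence agree for all $\a\in\bbc$. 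Only after this step may the change-of-variables computation underlying (\ref{tae1}) be re-run for the smooth, analytically continued functions, yielding (\ref{rosw}); from there your argument (weight collapse, Theorem \ref{lanseT}, density) goes through verbatim. So the proposal is salvageable by inserting exactly the continuation argument the paper supplies, but as written it cites Lemma \ref{ordi} outside its range of validity precisely where the theorem is nontrivial.
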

\begin{proof}  We recall that for $Re\, \a > 0$,  (\ref{4A5i}) yields
\be\label {mmzq}
 (R_{+}^\a f)(\theta, t)=  |\theta_n |^{\a-1}(T_{+}^\a f) \left (-\frac{\theta'}{\theta_n},\frac{t}{\theta_n}\right)), \qquad  \theta_n \neq 0.\ee
For $f \in S(\rn)$, let
\be\label {osw}
u(\theta, t)= a.c. \, (R_+^\a f)(\theta, t) \quad \text {\rm and} \quad v(x)= a.c. \, (T_+^\a f)(x)\ee
 be analytic continuations of the integrals $R_+^\a f$ and  $T_+^\a f$  from the domain $Re\, \a > 0$ to the entire complex plane. By  Lemmas \ref{sideT} and \ref{sideTr},
 these analytic
 continuations are well defined and represent smooth functions.
 Hence (\ref{mmzq}) extends analytically to all $\a \in \bbc$ and we get
 \[
 u(\theta, t)=  |\theta_n |^{\a-1} v\left (-\frac{\theta'}{\theta_n},\frac{t}{\theta_n}\right), \qquad  \theta_n \neq 0, \quad \a \in \bbc.\]
As in the proof of (\ref{tae}), for any $q\ge 1$ we have
\bea
(||u||_q^{\sim})^q &=& \intl_{\bbr} dt \intl_{S^{n-1}}  \left | |\theta_n |^{\a-1}  v\left (-\frac{\theta'}{\theta_n},\frac{t}{\theta_n}\right)\right |^q d\theta\nonumber\\
 &=&2\intl_{\bbr^{n}} \!\!\frac{|v(x)|^{q}}{(1\!+\!|x'|^2)^{(n+(\a_0 -1)q +1)/2}} \,dx. \nonumber\eea
In particular, if
 $q= (n+1)/(1-  \a_0)$, as in (\ref{wz2asr}), then  $||u||_{q}^{\sim}=2^{1/q} ||v||_{q}$. In other words, for any $f \in S(\rn)$ we have
 \be\label {rosw}
 ||a.c. \, R_+^\a f||_{q}^{\sim}=2^{1/q} ||a.c. \, T_+^\a f||_{q}= 2^{1/q} ||\T_+^\a f||_{q}.\ee
By Definition \ref {exte}   and  Theorem  \ref{lanseT}, it follows that  extension of (\ref{rosw}) to all $f \in L^p(\rn)$ gives
\[
 ||\R_+^\a f||_{q}^{\sim}\le 2^{1/q} c\, ||f||_{p},  \qquad  p=(n+1)/(n + \a_0),\]
 as desired.
\end{proof}

We observe that unlike Theorem \ref{lanseT},  Theorem \ref {lanseTR} does not include the  ``only if'' part. As we shall see below,  the values for $p$ and $q$ may differ from those in (\ref{wz2asr}).

\subsection{The Oberlin-Stein theorem}

The following statement, due to  Oberlin and Stein, is contained in Theorem 1 of the paper \cite{OS} (set $q=r$ in this theorem). Below we obtain it as a consequence of  our result for $Tf$.

\begin{theorem}\label {OST2} Let $1\le p, q\le \infty$.
For $n\ge 2$,
 the  inequality
\be\label {tein}
\|Rf\|^{\sim}_q \le c\,  \| f \|_p, \qquad c=\const,\ee
 holds
if and only if $1\le p\le (n+1)/n$ and  $1/q=n/p -n+1$.
\end{theorem}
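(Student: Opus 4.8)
The plan is to prove the two implications separately: the ``if'' part by interpolating between two endpoint estimates, and the ``only if'' part by combining a scaling argument with a Knapp-type example. Throughout I treat $1\le p\le\infty$ as given, so only the upper bound $p\le(n+1)/n$ and the relation $1/q=n/p-n+1$ have to be extracted.

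For the ``if'' part I would first establish the two endpoints $(1/p,1/q)=(1,1)$ and $(1/p,1/q)=(n/(n+1),\,1/(n+1))$, i.e. $(p,q)=(1,1)$ and $(p,q)=((n+1)/n,\,n+1)$. The first is elementary: since $|Rf|\le R|f|$ and, by Fubini's theorem (\ref{vali}), $\int_{\bbr}(R|f|)(\theta,t)\,dt=\|f\|_1$ for every $\theta$, integration in $\theta$ gives $\|Rf\|^{\sim}_1\le\sigma_{n-1}\|f\|_1$. The second endpoint is exactly Theorem \ref{lanseTR} with $\a=0$, for which $R_{+}^0=R$ and the exponents become $p=(n+1)/n$, $q=n+1$; this is the step that realizes $R$ ``as a consequence of our result for $Tf$''. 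Interpolating these two bounds by the Riesz--Thorin theorem for operators acting between the $L^p$-scale on $\rn$ and the $L^q$-scale on the measure space $Z_n$, I obtain $\|Rf\|^{\sim}_q\le c\,\|f\|_p$ for every pair on the segment joining the endpoints in $(1/p,1/q)$-coordinates. A direct computation shows this segment is precisely $\{(p,q):1\le p\le(n+1)/n,\ 1/q=n/p-n+1\}$; since all these $p$ satisfy $p<n/(n-1)$, Theorem \ref{lsu} guarantees $Rf$ is well defined, legitimizing the density argument behind the interpolation.

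For the ``only if'' part I would first recover the relation $1/q=n/p-n+1$ by scaling. Writing $f_\lambda(x)=f(\lambda x)$ and changing variables on $\theta^\perp$ gives $(Rf_\lambda)(\theta,t)=\lambda^{1-n}(Rf)(\theta,\lambda t)$, so that $\|Rf_\lambda\|^{\sim}_q=\lambda^{1-n-1/q}\|Rf\|^{\sim}_q$ while $\|f_\lambda\|_p=\lambda^{-n/p}\|f\|_p$. If (\ref{tein}) holds for all $f$, applying it to $f_\lambda$ and letting $\lambda\to0$ and $\lambda\to\infty$ forces the exponents to balance, i.e. $1-n-1/q=-n/p$, which is the asserted relation. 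It remains to show $p\le(n+1)/n$, and this is where the Knapp example enters. I would take $f=\chi_E$ with the thin slab $E=\{x:\ |x'|\le1,\ |x_n|\le\delta\}$, so that $\|f\|_p\approx\delta^{1/p}$. For directions $\theta$ within angular distance $\approx\delta$ of $\pm e_n$ and offsets $|t|\lesssim\delta$, the hyperplane $h(\theta,t)$ crosses $E$ over essentially the full unit disk in the $x'$-variables, whence $(Rf)(\theta,t)\gtrsim1$; the set of such $(\theta,t)$ has measure $\approx\delta^{n-1}\cdot\delta=\delta^{n}$ in $Z_n$. Therefore $\|Rf\|^{\sim}_q\gtrsim\delta^{n/q}$, and boundedness forces $\delta^{n/q}\lesssim\delta^{1/p}$ as $\delta\to0$, i.e. $n/q\ge1/p$. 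Feeding in $1/q=n/p-n+1$ and simplifying yields exactly $p\le(n+1)/n$.

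The main obstacle is the verification of the Knapp example, specifically the geometric claim that the cap of directions of angular width $\approx\delta$ about $\pm e_n$ together with the $t$-window of length $\approx\delta$ produces a set of measure $\approx\delta^{n}$ on which $Rf\gtrsim1$. The delicate point is that a hyperplane whose normal makes an angle $\eta$ with $e_n$ has its last coordinate drift by $\approx\eta$ across the unit extent of $E$ in the $x'$-directions, so the slice stays inside the slab of thickness $\delta$ exactly when $\eta\lesssim\delta$; getting the surface measure $\approx\delta^{n-1}$ of the corresponding cap, the interval $|t|\lesssim\delta$, and the uniform lower bound $\approx1$ on the $(n-1)$-dimensional slice measure simultaneously correct is what pins down the sharp exponent $(n+1)/n$.
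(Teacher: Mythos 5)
Your proposal is correct, and its first half follows the paper's own route: the endpoint $(p,q)=((n+1)/n,\,n+1)$ is imported from the transversal-transform theory (the paper passes through the identity (\ref{tae}) with $\nu=0$ and Theorem \ref{lanseTR}, which is exactly your citation), it is interpolated against the trivial bound $\|Rf\|^{\sim}_1=\sigma_{n-1}\|f\|_1$ from (\ref{vali}), and the relation $1/q=n/p-n+1$ is obtained by the identical isotropic scaling $f_\lambda(x)=f(\lambda x)$. One small point to patch in your endpoint step: Theorem \ref{lanseTR} gives boundedness of the \emph{extension} $\R_+^0$ of $R$ from $S(\rn)$, so to get (\ref{tein}) for the actual integral $Rf$ of a general $f\in L^{(n+1)/n}(\rn)$ you still need an identification of the type proved in Theorem \ref{o9iu4}; the paper sidesteps this by writing $\|Rf\|^{\sim}_{n+1}=2^{1/(n+1)}\|Tf\|_{n+1}$ via (\ref{tae}) and then invoking Theorems \ref{lanseT} and \ref{o9iu4} for $T$. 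The genuine divergence is in the necessity of $p\le(n+1)/n$. You use a Knapp-type example $f=\chi_E$, $E=\{|x'|\le 1,\ |x_n|\le\delta\}$, for which $\|f\|_p\approx\delta^{1/p}$ while $Rf\gtrsim 1$ on a set of measure $\approx\delta^{n}$ in $Z_n$, forcing $n/q\ge 1/p$; combined with $1/q=n/p-n+1$ this indeed yields $p\le(n+1)/n$ (the arithmetic and the geometric claim about the drift $\approx\eta$ of the slice across the slab both check out, including the case $q=\infty$). The paper instead applies the anisotropic dilation $\tilde f_\lambda(x)=f(\lambda x',x_n)$, transfers norms through the identity (\ref{tae}), and lets $\lambda\to 0$ to deduce $q\le p'$, which together with the homogeneity relation gives the same bound. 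Your Knapp argument is self-contained on $Z_n$, makes the extremizing geometry visible, and needs no reference to $T$; the paper's dilation argument avoids any geometric case-checking, exploits precisely the bi-parametric dilation structure highlighted in Remark \ref{8uy}, and recycles verbatim for the fractional operators $R_+^\a$ in Theorem \ref{OST2a}, where a Knapp slab would have to be re-examined against the kernel $(t-x\cdot\theta)_+^{\a-1}$.
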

\begin{proof}   First, note that  $(n+1)/n <n/(n-1)$, and therefore,  by Theorem \ref{lsu},
  both $Rf$ and $Tf$   exist a.e.  Further, if we make use of   (\ref{tae}) with $\nu=0$ and $p$ replaced by  $q=n+1$, we obtain
\[
\intl_{Z_n} |(Rf)(\theta, t)|^{n+1} dt d\theta= 2 \intl_{\bbr^{n}}  |(T f) (x)|^{n+1} dx.\]
 By Theorem \ref {lanseTR}, this gives the endpoint estimate in (\ref{tein}). The result for all $1\le p\le (n+1)/n$ then follows by interpolation, taking into account that
 $\|Rf\|^{\sim}_1=\sig_{n-1} ||f||_1$ by (\ref{vali}).

The necessity of the relations $1/q\!=\!n/p-n+1$ and  $p\!\le\! (n+1)/n$ was justified in \cite{OS}. For the sake of completeness,  we perform this justification  in  detail  in  a slightly different way, using the scaling argument.
Denote $f_\lam (x)=f(\lam x)$, $\lam >0$. Then $(R f_\lam) (\theta, t)= \lam^{1-n} (R f) (\theta, \lam t)$,
\[||f_\lam||_p= \lam^{-n/p} ||f||_p,\qquad || Rf_\lam ||^{\sim}_q= \lam^{1-n-1/q} || Rf||^{\sim}_q.\]
 If $|| Rf||^{\sim}_q\le c\, ||f||_p$ for all $f\in L^p (\rn)$, then, replacing $f$ by $f_\lam$, we obtain
\[\lam^{1-n-1/q} || Rf||^{\sim}_q \le \lam^{-n/p} ||f||_p,\]
which gives
\be\label {ordwa} 1-n-1/q =-n/p.\ee

To prove the necessity of the  bound $p\le (n+1)/n$, we set  $\tilde f_\lam (x)= f(\lam x', x_n)$. Then, as above,  $||\tilde f_\lam||_p= \lam^{(1-n)/p} ||f||_p$, and (\ref{tae})  (with  $p$ replaced by  $q$) yields
\[
(|| R\tilde f_\lam  ||^{\sim}_q)^q=2\intl_{\bbr^{n}} \!\!\frac{ |(T\tilde f_\lam) (x)|^q}{(1\!+\!|x'|^2)^{(n-q+1)/2}} dx.\]
Note that $(T\tilde f_\lam) (x)=\lam^{1-n} (T f) (x'/\lam, x_n)$.
Hence, changing variables, we obtain
\[
(|| R\tilde f_\lam  ||^{\sim}_q)^q= 2 \lam^{(n-1)(1-q)} \intl_{\bbr^{n}} \!\!\frac{ |(Tf) (x', x_n)|^q}{(1+|\lam x'|^2)^{(n-q+1)/2}} dx.\]
If $|| Rf||^{\sim}_q\le c\, ||f||_p$ for all $f\in L^p (\rn)$, then, setting $f=\tilde f_\lam$, we obtain
\[\lam^{(n-1)(1-q)/q} \,\Bigg (2\intl_{\bbr^{n}} \!\!\frac{ |(Tf) (x', x_n)|^q}{(1+|\lam x'|^2)^{(n-q+1)/2}} dx\Bigg )^{1/q} \le \lam^{(1-n)/p} ||f||_p,\]
or
\be\label{xst} \lam^{s} A^{1/q}(\lam) \le  ||f||_p, \qquad s=(n-1)(1-q)/q+ (n-1)/p,\ee
where  $A(\lam)$ stands for the expression in brackets. Let us pass to the limit in (\ref{xst}) as   $\lam \to 0$ assuming   $f$ to be  good enough.
If $s <0$, then the left-hand side of (\ref{xst}) tends to infinity. Hence, necessarily,  $s \ge 0$, which is equivalent to $q\le p'$. Combining the last inequality with (\ref{ordwa}), we obtain $p\le (n+1)/n$.
\end{proof}

\subsection{The Hardy-Littlewood-Sobolev theorem for $R^\a_+$}

The next theorem resembles the celebrated Hardy-Littlewood-Sobolev theorem for Riesz potentials; cf. \cite [Chapter V, Section 1.2]{Ste}, \cite [Theorem 0.3.2] {Sog}, \cite [p. 189] {Sog1}.
We use the notation  $m  \{\cdot\}$ for  the Lebesgue measure of the corresponding set and assume, for simplicity, that $\a$ is real-valued.
\begin{theorem}\label {OST2a}
Let $n\ge 2$, $0<\a<1$, $1\le p\le \infty$; $\, 1/p+ 1/p'=1$.

\vskip 0.2 truecm

\noindent {\rm (i)}
 If $p=1$, then   $R_+^\a$ is an operator of weak $(1,q)$-type with $ 1/q= 1-\a$, that is,
 \be\label{kde}
m \{ (\theta, t):  |(R^\a_+ f) (\theta, t)|>\lam \} \le c \left (\frac{||f||_1}{\lam}\right )^{1/(1-\a)} \text{for all $\;\lam >0$}. \ee

\vskip 0.2 truecm

\noindent {\rm (ii)} The  inequality
\be\label {teina}
\|R^\a_+f\|^{\sim}_q \le c\,  \| f \|_p, \qquad c=\const,\ee
 holds if and only if
 \be\label {law} 1< p \le p_\a, \qquad p_\a=\frac{n+1}{n + \a}, \qquad \frac{1}{q} = 1-\a-\frac{n}{p'}.\ee

 \end{theorem}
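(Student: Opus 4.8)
The plan is to treat the two endpoints separately and join them by interpolation for the ``if'' part of (ii), while the ``only if'' part will come from two scaling arguments supplemented by a tail-divergence counterexample. I would begin with the weak-type endpoint (i), which is the genuinely new ingredient. Factoring $R_+^\a$ as $(R_+^\a f)(\theta,t)=(I_+^\a R_\theta f)(t)$ via (\ref{tre2.3}) reduces matters to the one-dimensional Riemann--Liouville integral $I_+^\a$ acting in $t$. This is convolution on $\bbr$ with the kernel $k_\a(s)=s_+^{\a-1}/\Gamma(\a)$, which lies in $L^{1/(1-\a),\infty}(\bbr)$ because $m\{s:k_\a(s)>\mu\}=(\Gamma(\a)\mu)^{-1/(1-\a)}$. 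The weak-type Young inequality then gives $I_+^\a\colon L^1(\bbr)\to L^{1/(1-\a),\infty}(\bbr)$, so for each fixed $\theta$,
\[
m_t\{t:|(R_+^\a f)(\theta,t)|>\lam\}\le c\,\Big(\frac{\|R_\theta f\|_{L^1(\bbr)}}{\lam}\Big)^{1/(1-\a)}.
\]
Since $\|R_\theta f\|_{L^1(\bbr)}\le\|f\|_1$ for every $\theta$ (Fubini, as in (\ref{vali})), integrating this over $S^{n-1}$ against $d\theta$ — the cylinder carrying the product measure — yields (\ref{kde}) with $1/q=1-\a$.

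For the ``if'' direction of (ii), observe that at $p=p_\a$ Theorem \ref{lanseTR} applies (since $0<\a<1$ lies in $[(1-n)/2,1]$) and gives the strong bound with $q=(n+1)/(1-\a)$; both this endpoint and the weak endpoint $p=1$, $q=1/(1-\a)$ of part (i) satisfy the affine relation $1/q=1-\a-n/p'$. I would then apply the Marcinkiewicz interpolation theorem between the weak-$(1,1/(1-\a))$ bound and the (a fortiori weak) $(p_\a,(n+1)/(1-\a))$ bound; the two target exponents are distinct and each dominates the corresponding domain exponent, so interpolation produces the strong estimate (\ref{teina}) for every $1<p<p_\a$ with the required $q$, the case $p=p_\a$ being the endpoint itself.

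For the ``only if'' direction I would argue in three steps. First, the isotropic scaling $f_\lam(x)=f(\lam x)$ gives $(R_+^\a f_\lam)(\theta,t)=\lam^{1-n-\a}(R_+^\a f)(\theta,\lam t)$, so $\|R_+^\a f_\lam\|^{\sim}_q=\lam^{1-n-\a-1/q}\|R_+^\a f\|^{\sim}_q$ while $\|f_\lam\|_p=\lam^{-n/p}\|f\|_p$; matching powers forces $1/q=1-\a-n/p'$. Second, to obtain $p\le p_\a$ I use the anisotropic scaling $\tilde f_\lam(x)=f(\lam x',x_n)$ together with (\ref{tae1}) taken with $\nu=0$ and $p$ replaced by $q$: the substitution $x'\mapsto\lam w'$ gives
\[
(\|R_+^\a\tilde f_\lam\|^{\sim}_q)^q=2\lam^{(1-n)q+n-1}\int_{\bbr^n}\frac{|(T_+^\a f)(w',x_n)|^q}{(1+\lam^2|w'|^2)^{\beta}}\,dw'\,dx_n,\qquad \beta=\frac{n+(\a-1)q+1}{2}.
\]
Inserting this into the assumed inequality yields $\lam^{(1-n)(1/q'-1/p)}A(\lam)^{1/q}\le c\|f\|_p$, where $A(\lam)$ denotes the integral; letting $\lam\to 0$ (with $A(\lam)$ bounded below by a positive constant for $f\ge 0$, $f\not\equiv 0$) forces the exponent $(1-n)(1/q'-1/p)$ to be nonnegative, i.e. $p\le q'$, equivalently $p\le p_\a$. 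Third, to exclude $p=1$ I would take a nonnegative $f\in C_c^\infty(\rn)$ with $\int f>0$: as $t\to+\infty$ one has $(R_+^\a f)(\theta,t)\sim t^{\a-1}\|f\|_1/\Gamma(\a)$, so $\int_{\bbr}|(R_+^\a f)(\theta,t)|^q\,dt$ diverges exactly when $q=1/(1-\a)$, which is the value scaling forces at $p=1$; hence strong type fails at $p=1$ and $p>1$ is necessary.

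The step I expect to be the main obstacle is the ``only if'' bound $p\le p_\a$. Rather than scaling the Radon transform directly — its kernel mixes all coordinates and does not transform cleanly under $x'$-dilations — the argument must be routed through the weighted identity (\ref{tae1}) linking $R_+^\a$ and $T_+^\a$, and one must verify that $\beta\ge 0$ on the relevant range (with $\beta=0$ precisely at $p=p_\a$) so that $(1+\lam^2|w'|^2)^{-\beta}$ is a contraction and the limiting integral stays positive as $\lam\to0$. By contrast, once the one-dimensional weak Young inequality is invoked, the weak-type endpoint (i) and the subsequent Marcinkiewicz interpolation should be routine.
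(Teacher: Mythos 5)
Your proposal follows the same architecture as the paper's proof --- a weak $(1,1/(1-\a))$ endpoint, a strong endpoint at $p_\a$, Marcinkiewicz interpolation, isotropic plus anisotropic scaling through (\ref{tae1}) for the ``only if'' part, and a counterexample excluding $p=1$ --- and your scaling computations agree with the paper's line for line. Two of your ingredients, however, genuinely differ, and both work. For part (i) you run the one-dimensional weak-type Young inequality fiberwise: $k_\a=s_+^{\a-1}/\Gam(\a)\in L^{1/(1-\a),\infty}(\bbr)$, $\|R_\theta f\|_{L^1(\bbr)}\le \|f\|_1$ for every $\theta$ by (\ref{vali}), and then you integrate the fiber estimate over $S^{n-1}$, which is legitimate since $Z_n$ carries the product measure and $S^{n-1}$ has finite measure. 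The paper instead performs Stein's splitting $k_\a=k_1+k_\infty$ with an optimized cut $\mu$ directly on the cylinder; the two arguments are equivalent (weak Young is proved by exactly that splitting), yours being shorter at the cost of citing a black box. For the exclusion of $p=1$ your argument is actually simpler than the paper's: for a fixed nonnegative bump $f$ one has $(R_+^\a f)(\theta,t)\ge c\,t^{\a-1}\|f\|_1$ for large $t$, uniformly in $\theta$, so $\|R_+^\a f\|^\sim_{1/(1-\a)}=\infty$ outright, whereas the paper constructs a mollifier family $\om_\e$, verifies it is an approximate identity, and invokes Fatou's lemma. (Minor imprecision: the tail integral diverges for all $q\le 1/(1-\a)$, not ``exactly when'' $q=1/(1-\a)$; but scaling pins $q$ to that value, so your conclusion stands. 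Also, your worry about the sign of $\beta$ in the anisotropic step is a non-issue: $A(\lam)$ stays bounded below near $\lam=0$ either way.)

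There is one step that needs a patch: at the endpoint $p=p_\a$ you quote Theorem \ref{lanseTR}, but that theorem bounds the \emph{extension} $\R_+^\a$, defined by $L^p$-continuity from Schwartz functions, whereas (\ref{teina}) --- and your weak-type bound at $p=1$ --- concern the absolutely convergent integral $R_+^\a f$, which exists a.e.\ for $f\in L^{p_\a}(\rn)$ by Theorem \ref{o9iuN} since $p_\a<n/(n-1+\a)$. Marcinkiewicz interpolation requires both endpoint estimates to hold for one and the same operator on $L^1+L^{p_\a}$, so you must show $\R_+^\a f=R_+^\a f$ a.e.\ for $f\in L^{p_\a}$; as stated, you are interpolating two operators not known to coincide. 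The paper sidesteps a separate identification for $R$ by routing the endpoint through $T_+^\a$: Theorem \ref{o9iu4} identifies the integral $T_+^\a f$ with $\T_+^\a f$ a.e., and then (\ref{tae1}) with $\nu=0$ and exponent $q_\a=(n+1)/(1-\a)$ (where the weight degenerates to $1$) transfers the bound to $R_+^\a f$ exactly. Your route can be repaired by the same density argument used in Theorem \ref{o9iu4}: both $R_+^\a$ (bounded $L^{p_\a}\to L^{p_\a}_\nu(Z_n)$ by Theorem \ref{o9iuN}) and $\R_+^\a$ (bounded $L^{p_\a}\to L^{q_\a}(Z_n)$) map continuously into $L^1(K)$ for any compact $K\subset Z_n$ bounded away from $t=0$, and they agree on the dense subspace $S(\rn)$, so they agree a.e. This is a short fix, but it is precisely the extension-versus-integral distinction the paper is built around, and it has to be said.
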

\begin{proof} The proof consists of three steps. First we prove {\rm (i)}, then the ``if'' part of {\rm (ii)}, and then the ``only if'' part of {\rm (ii)}.

\vskip 0.2 truecm

\noindent { STEP I.} Let us prove {\rm (i)}.  We  proceed as in \cite [Chapter V, Section 1.3]{Ste} with minor changes related to the specifics of our object.  It suffices to  prove  (\ref{kde}) for $||f||_1=1$. Indeed, if for such an $f$,
\[
m \{ (\theta, t):  |(R^\a_+ f) (\theta, t)|>\lam \} \le c \left (\frac{1}{\lam}\right )^{q}  \quad \text{for all $\;\lam >0$}, \]
then, for arbitrary  $f\in L^1 (\rn)$ with  $||f||_1\neq 0$ and $\lam$ replaced by $\lam/||f||_1$ we have
\[
m \{ (\theta, t):  |(R^\a_+ [f/||f||_1]) (\theta, t)|>\lam/||f||_1 \} \le c \left (\frac{||f||_1}{\lam}\right )^{q}. \]
The latter coincides with (\ref{kde})  by dilation argument.

Let $\mu$ be a fixed positive constant to be specified later. We  set
\[
(R_{+}^\a f)(\theta, t)\!=\!\frac{1}{\Gam (\a)}\intl_0^\infty \!\eta^{\a -1} (R_\theta f)(t\!-\!\eta) d \eta \!= \! k_1 \ast R_\theta f + k_\infty \ast R_\theta f,\]
where
\[
k_1 (\eta)=\frac{1}{\Gam (\a)}\left\{ \!
 \begin{array} {ll} \eta^{\a -1} & \mbox{if $ \eta \le \mu,$}\\
 0 & \mbox{if $\eta >\mu; $}\\
  \end{array}
\right.\qquad
k_\infty (\eta)=\frac{1}{\Gam (\a)}\left\{ \!
 \begin{array} {ll} 0 & \mbox{if $ \eta \le \mu,$}\\
 \eta^{\a -1} & \mbox{if $\eta >\mu. $}\\
  \end{array}
\right.\]
As in  \cite [Chapter V, Section 1.3]{Ste}, we  replace $\lam$ by $2\lam$ in the left-hand side of (\ref{kde}), which gives
\bea
&& m \{ (\theta, t):  |(R^\a_+ f) (\theta, t)|>2 \lam \} \nonumber\\
&&\le m \{ (\theta, t):  |(k_1 \ast R_\theta f)(t) |> \lam \} + m \{ (\theta, t):  |(k_\infty \ast R_\theta f)(t) |> \lam \}\nonumber\\
&&=m_1\{\cdot \} + m_\infty \{\cdot \}. \nonumber\eea
Thus it suffices to estimate $m_1\{\cdot \}$ and $m_\infty \{\cdot \}$.  Taking into account (\ref{vali}) and the assumption $||f||_1=1$,  we have
\bea
 ||k_1 \ast R_\theta f||^{\sim}_1&=&\frac{1}{\Gam (\a)}\intl_{Z_n}\Bigg | \intl_0^\mu \eta^{\a -1} (R_\theta f)(t\!-\!\eta) d \eta \Bigg | d\theta dt\nonumber\\
 &\le& \frac{\mu^\a}{\Gam (\a +1)}  ||R_\theta f||^{\sim}_1=\frac{\mu^\a \sig_{n-1}}{\Gam (\a +1)} ||f||_1=c_1 \mu^\a. \nonumber\eea
However, $ ||k_1 \ast R_\theta f||^{\sim}_1 \ge \lam m_1\{\cdot \}$. Hence
\be\label{mzar}
m_1\{\cdot \} \le c_1\, \frac{\mu^\a}{\lam}.\ee
Furthermore,  by (\ref{vali}),
\bea
||k_\infty \ast R_\theta f||^{\sim}_\infty \!\!&\le&\!\! \frac{1}{\Gam (\a)} \Big \| \intl_\mu ^\infty \!\eta^{\a -1} (R_\theta f)(t\!-\!\eta) d \eta \Big \|^{\sim}_\infty\nonumber\\
&\le&\!\! \frac{\mu^{\a -1}}{\Gam (\a)} \Big \| \intl_{-\infty} ^\infty\! (R_\theta |f|)(\eta) d\eta \Big \|_{L^\infty (S^{n-1})}\!\!= \frac{\mu^{\a -1}}{\Gam (\a)} ||f||_1\!=\!c_2 \mu^{\a -1}.\nonumber\eea
Now we choose $\mu$ so that $c_2 \mu^{\a -1}=\lam$, i.e., $\mu=(c_2/\lam)^{1/(1-\a)}$. Then $m_\infty \{\cdot \}=0$ and, by  (\ref{mzar}),
\[
 m \{ (\theta, t):  |(R^\a_+ f) (\theta, t)|>2 \lam \}=m_1\{\cdot \} \le c_3\, \lam^{-1/(1-a)}= c_3 \left (\frac{||f||_1}{\lam}\right )^{q}.\]
This gives  (\ref{kde}).

\vskip 0.2 truecm

\noindent { STEP II.}  Let us prove the ``if'' part of  {\rm (ii)}. It relies on the connection between $R^\a_+f$ and $T^\a_+f$.
 Recall that  by Theorems \ref{o9iuN} and \ref{o9iu2},
  both integrals   exist a.e. provided  $p< n/(n-1+\a)$. We meet this condition because \[p_\a=(n+1)/(1+ \a)< n/(n-1+\a).\]
By   (\ref{4A5i}),  $R^\a_+f =\Lam_\a T^\a_+f$, whence (use
  (\ref{tae1}) with $\nu=0$ and $p$ replaced by  $q_\a=(n+1)/(1- \a)$)
\[
\intl_{Z_n}  |(R_{+}^\a f)(\theta, t)|^{q_\a} dt d\theta= 2 \intl_{\bbr^{n}}  |(T_+^\a f) (x)|^{q_a} dx.\]
This gives the endpoint estimate in (\ref{teina}) for $p=p_\a$ and $q=q_\a$  because $T_+^\a f= \T_+^\a f$ and $\T_+^\a$ is bounded
from $L^{p_\a} (\rn)$ to $L^{q_\a} (\rn)$; see Theorems \ref {o9iu4} and  \ref{lanseT}.
 Combining the endpoint estimate with (i) and making use of the Marcinkiewicz interpolation theorem, we obtain (\ref{teina}) for all $1< p \le p_\a$,  $1/q=1-\a-n/p'$.

\vskip 0.2 truecm

\noindent { STEP III.}  Let us prove the ``only if'' part of  {\rm (ii)}.
 The  necessity of the bounds $p=p_\a$ and  $q=q_\a$ can be proved as in Theorem \ref {OST2}. Specifically,
 let $f_\lam (x)=f(\lam x)$, $\lam >0$. Then $(R^\a_+ f_\lam) (\theta, t)= \lam^{1-n-\a} (R^\a_+ f) (\theta, \lam t)$,
\[||f_\lam||_p= \lam^{-n/p} ||f||_p,\qquad || R^\a_+f_\lam ||^{\sim}_q= \lam^{1-n-\a-1/q} || R^\a_+f||^{\sim}_q.\]
 If $|| R^\a_+f||^{\sim}_q\le c\, ||f||_p$ for all $f\in L^p (\rn)$, then, replacing $f$ by $f_\lam$, we obtain
\[\lam^{1-n-\a-1/q} || R^\a_+f||^{\sim}_q \le \lam^{-n/p} ||f||_p,\]
which gives
\be\label {ordwm} 1-n-\a-1/q =-n/p \quad \text {or}\quad  1/q = 1-\a-n/p'.\ee
To obtain another relation between $p$ and $q$, we set  $\tilde f_\lam (x)= f(\lam x', x_n)$. Then  $||\tilde f_\lam||_p= \lam^{(1-n)/p} ||f||_p$, and (\ref{tae1}) (with  $p$ replaced by  $q$) yields
\[(|| R^\a_+\tilde f_\lam  ||^{\sim}_q)^q= 2 \intl_{\bbr^{n}} \!\!\frac {|(T^\a_+\tilde f_\lam) (x)|^q}{(1\!+\!|x'|^2)^{(n+(\a -1)q  +1)/2}}\, dx.\]
Note that $(T^\a_+\tilde f_\lam) (x)=\lam^{1-n} (T^\a_+ f) (x'/\lam, x_n)$.
Hence, changing variables, we obtain
\[
(|| R^\a_+\tilde f_\lam  ||^{\sim}_q)^q \!= \!\lam^{(n-1)(1-q)} A(\lam), \quad A(\lam)\!=\!2  \!\intl_{\bbr^{n}} \!\!\frac{ |(T^\a_+f) (x', x_n)|^q}{(1\!+\!|\lam x'|^2)^{(n+(\a -1)q  +1)/2}} dx.\]
If $|| R^\a_+ f||^{\sim}_q\le c\, ||f||_p$ for all $f\in L^p (\rn)$, then $|| R^\a_+ \tilde f_\lam||^{\sim}_q\le c\, ||\tilde f_\lam||_p$ for all $\lam >0$, and  we have
$\lam^{(n-1)(1-q)/q} \, A^{1/q}(\lam) \le \lam^{(1-n)/p} ||f||_p$. This gives
\be\label{urt} \lam^{s} A^{1/q}(\lam) \le  ||f||_p, \qquad s=(n-1)(1-q)/q+ (n-1)/p.\ee
 Let us pass to the limit in (\ref{urt}) as   $\lam \to 0$ assuming   $f$ to be  good enough.
If $s <0$, then the left-hand side of (\ref{urt}) tends to infinity, which gives contradiction. Hence  $s \ge 0$, that is, $q\le p'$.
Combining the last inequality with (\ref{ordwm}), we obtain $p\le p_\a$, as desired.

To complete the proof, it remains to show that  (\ref{teina}) fails  if $p=1$. This can be done using  the approximation argument; cf. \cite[p. 119]{Ste},
where  similar reasoning   was applied to the Riesz potentials in $\rn$. Suppose the contrary, that is, there is a constant $c$, such that
\be\label {teinas}
\|R^\a_+f\|^{\sim}_{1/(1-\a)} \le c\,  \| f \|_1  \quad \text {for all}\quad f\in L^1(\rn). \ee
We choose $f$ to be a mollifier
\be\label {omee} \om_\e (x)=\left \{ \begin{array} {ll} \displaystyle{\frac{C}{\e^n}\, \exp\left (-\frac{\e^2}{\e^2-|x|^2}\right )},& |x|\le \e,\\
0,& |x|> \e,\\\end{array}\right .\ee
where $C$ is chosen so that $\int_{\bbr^{n}}\om_\e (x)\, dx=1$. Then
\be\label {omf} (R_+^{\,\a}\om_\e)(\theta, t) = \frac{1}{\Gam (\a)}\intl_{\bbr} (t-\eta)_+^{\a -1} (R\om_\e)(\theta, \eta)\, d\eta.\ee
Clearly,  $\om_\e (x)= \e^{-n} \om_1 (x/\e)$ and $(R\om_\e)(\theta, \eta) =\e^{-1} (R\om_1)(\theta, \eta/\e)$. We  set
 $\om_1 (x)=\om (|x|)$. Then for all $\theta \in S^{n-1}$,
\[ (R\om_1)(\theta, \eta)= \psi (\eta), \quad
\psi (\eta)=\sigma_{n-2} \intl^\infty_{|\eta|}\! \om (r)(r^2-\eta^2)^{(n-3)/2}r dr; \]
see (\ref{rese}). Thus (\ref{omf}) becomes
\be\label {tetas}
(R_+^{\,\a}\om_\e)(\theta, t) = \frac{t_+^{\a -1}}{\Gam (\a)} \ast \psi_\e, \qquad \psi_\e (\eta)= \e^{-1} \psi (\eta/\e).\ee
The family $\{\psi_\e\}_{\e >0}$ is an approximate identity. To see that, we observe that by (\ref{vali}),
\[
||\psi||_{L^1 (\bbr)}=\intl_{\bbr} (R\om_1)(\theta, \eta) d\eta = ||\om_1||_{L^1 (\rn)}=1.\]
Further,  $\psi$ is a monotone decreasing function of $|\eta|$. The latter is obvious for $n=3$ and can be easily checked for $n>3$, when  differentiation yields $\psi'(\eta)<0$ for $\eta >0$.  In the case $n=2$, $\eta >0$, we first integrate by parts to get
$\psi (\eta)=-2 \int^\infty_{\eta}\! \om' (r)(r^2-\eta^2)^{1/2} dr$ and then differentiate, which gives $\psi' (\eta)=2 \eta\int^\infty_{\eta}\! \om' (r)(r^2-\eta^2)^{-1/2} dr<0$.

Now applying  the approximation to the identity machinery  \cite[Theorem 2(b), p. 63]{Ste} to the function
\[
t_+^{\a -1}= t_+^{\a -1} \chi {(0,1)} + t_+^{\a -1} \chi {(1,\infty)} \in L^1 (\bbr) +  L^q (\bbr), \qquad q>1/(1-\a),\]
we conclude that $(R_+^{\,\a}\om_\e)(\theta, t)$ converges to  $ t_+^{\a -1}/\Gam (\a)$ a.e. on $\bbr$, and hence for almost all $(\theta, t) \in Z_n$. By  Fatou's lemma and the assumption (\ref{teinas}) it follows that
\bea
\intl_{\bbr} |t_+^{\a -1}/\Gam (\a)|^{1/(1-\a)} dt &=& \sig_{n-1}^{-1}\,||  \lim\limits_{\e \to 0} \,(R_+^{\,\a}\om_\e)(\theta, t)||^{\sim}_{1/(1-\a)} \nonumber\\
&\le&  \sig_{n-1}^{-1}\, \underset{\e \to 0}{\lim\limits \inf} \,|| R_+^{\,\a}\om_\e||^{\sim}_{1/(1-\a)} \le c \,||\om_\e||_1=c,\nonumber\eea
which is impossible, because the integral on the left-hand side diverges.
\end{proof}

\begin{remark}\label {8uy} As we pointed out in Introduction, it is an interesting open problem to obtain necessary and sufficient conditions of the $L^p$-$L^q$ boundedness of the extended operator $\R_+^\a$ for {\it all} complex $\a$. The above theorems contain such conditions only for $\a=0$ and  $0< Re \, \a < 1$. As we could see, the theory of the Radon-type fractional integrals  $R_+^\a f$,
associated with the classical Radon transform $R$ is more complicated than that for
$T_+^\a f$, associated with the transversal Radon transform. One of the explanations of this phenomenon might be  that $R_+^\a $ can be scaled only in the radial direction, whereas $T_+^\a $ enjoys
 bi-parametric dilations (in the $x'$-variable and in the $x_n$-variable).
\end{remark}

\section{Parabolic Radon-type  fractional integrals }

In this section we consider the parabolic Radon transform
\be \label {pparP}
(P f)(x) =\!\intl_{\bbr^{n-1}} f(x'\!-\!y', x_n \! -\!|y'|^2)\, dy', \quad  x=(x', x_n) \in \rn, \ee
 and  the corresponding  fractional integral
\be\label{poaxzc}
(P_{+}^{\,\a} f)(x) =\frac{1}{\Gam (\a)} \intl_{\bbr^n}(y_n -|y'|^2)_{+} ^{\a -1} \,f(x-y)\, dy.\ee
   The  integral (\ref{pparP})  resembles integration  over the shifted  paraboloid
\[\pi_x= \pi_0 +x, \qquad \pi_0= \{y=(y', y_n):  y_n= - |y'|^2\}.\]
However, $(P f)(x)$ differs from the usual surface integral
\be \label {papar}
\intl_{\pi_x} \!f(y)\, d\sig (y)=\intl_{\bbr^{n-1}} f(x'-y', x_n  -|y'|^2) \, (1+4|y'|^2)^{1/2} dy'\ee
 by the Jacobian factor, which is suppressed in our consideration.

Our aim is to obtain sharp $L^p$-$L^q$ estimates for $P f$ and  $P_{+}^{\,\a} f$.

\subsection{The parabolic Radon transform}
If $f\in L^1 (\rn)$, then, by Fubini's theorem,
\be\label{valip}
\intl_{-\infty}^\infty (Pf) (x', x_n) dx_n=\intl_{\bbr^n} f(x) dx,\ee
whence  $(Pf) (x', x_n)$ is finite for all
$x' \in \bbr^{n-1}$ and almost all $x_n\in \bbr$.

There is a remarkable connection between $P$ and the transversal Radon transform $T$ in (\ref {bart}).
This connection was pointed out  in \cite [Lemma 2.3]{Chr} and  used in \cite {Ru22}. For the sake of completeness, below
we describe it in  detail. Let
\be \label {bars}
(B_1f)(x)\!=\!f(x', x_n \!-\!|x'|^2), \quad (B_2 F)(x)\!=\!F(2x',  x_n \!-\!|x'|^2).\ee
The corresponding inverse maps have the form
\be \label {bars1} (B_1^{-1}u)(x) \! = \!u(x', x_n \!+\!|x'|^2), \quad (B_2^{-1} v)(x)\!=\!v \left (\frac{x'}{2},  x_n \!+\!\frac{|x'|^2}{4}\right ).\ee
One can readily see that
\be\label {iar} ||B_1f||_p =||f||_p, \qquad \| B_2 F \|_{q} = 2^{(1-n)/q} \,\|F \|_{q}.\ee

\begin{lemma}\label {swa} The equality
\be \label {bts}  Pf=B_2TB_1 f,\ee
holds provided that either side of it exists in the Lebesgue sense.
\end{lemma}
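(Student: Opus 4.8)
The plan is to establish (\ref{bts}) by a direct computation, composing the three operators in the order $B_1$, then $T$, then $B_2$, and matching the outcome with the definition (\ref{pparP}) of $Pf$ via completion of the square. First I would apply $B_1$ from (\ref{bars}) to get $(B_1 f)(z)=f(z',z_n-|z'|^2)$, and insert this into the transversal transform (\ref{bart22}): for a point $w=(w',w_n)\in\rn$,
\be
(TB_1 f)(w)=\intl_{\bbr^{n-1}} (B_1 f)(y',\, w_n+w'\cdot y')\,dy'=\intl_{\bbr^{n-1}} f(y',\, w_n+w'\cdot y'-|y'|^2)\,dy'.
\ee

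Next I would apply $B_2$ from (\ref{bars}), which amounts to setting $w'=2x'$ and $w_n=x_n-|x'|^2$, yielding
\be
(B_2 TB_1 f)(x)=\intl_{\bbr^{n-1}} f\big(y',\, x_n-|x'|^2+2x'\cdot y'-|y'|^2\big)\,dy'.
\ee
The key algebraic step is the completion of the square $-|x'|^2+2x'\cdot y'-|y'|^2=-|x'-y'|^2$, after which the last argument becomes $x_n-|x'-y'|^2$. Finally, the translation $y'\mapsto x'-y'$, which is measure-preserving on $\bbr^{n-1}$, transforms the integral into $\intl_{\bbr^{n-1}} f(x'-y',\, x_n-|y'|^2)\,dy'$, and this is exactly $(Pf)(x)$ by (\ref{pparP}).

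I expect no genuine obstacle here, as the computation is entirely routine; the only point requiring care is the proviso that (\ref{bts}) holds whenever either side exists in the Lebesgue sense. Since $B_1$ and $B_2$ act pointwise (they merely re-evaluate the argument) and the sole integration is the single-variable integral in $T$ matched against the one in $P$, the integrals defining $(B_2 T B_1 f)(x)$ and $(Pf)(x)$ coincide \emph{as the same integral} after the completion of the square and the translation of the integration variable. Hence, for each fixed $x$, one converges absolutely if and only if the other does, and their values agree, which is precisely what the Lebesgue-sense qualifier demands. The norm identities (\ref{iar}) are not needed for this lemma itself but will be the companion facts used later to transfer $L^p$-$L^q$ estimates from $T$ to $P$.
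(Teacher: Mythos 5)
Your proposal is correct and follows essentially the same route as the paper: a direct computation in which the quadratic terms are matched via $-|x'|^2+2x'\cdot y'-|y'|^2=-|x'-y'|^2$ and a measure-preserving translation in $y'$. The only cosmetic difference is direction — the paper verifies $B_2^{-1}Pf=TB_1f$ while you compose $B_2TB_1f$ forward to reach $Pf$ — and your remark on the Lebesgue-sense proviso (the two sides are literally the same integral after the substitution) is exactly what the paper's chain of equalities implicitly uses.
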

\begin{proof} We write the left-hand side as
\bea
(P f)(x) &=& \intl_{\bbr^{n-1}} f(y', x_n-|x'-y'|^2)\, dy'\nonumber\\
&=& \intl_{\bbr^{n-1}} f(y', x_n -|x'|^2 -|y'|^2 + 2 x'\cdot y')\, dy'.\nonumber\eea
Hence
\[
(B_2^{-1}P f)(x)= (P f)\!\left (\frac{x'}{2},  x_n +\frac{|x'|^2}{4}\right )=\intl_{\bbr^{n-1}}\! \!  f (y', x_n -|y'|^2+ x'\cdot y')\, dy'.\]
On the other hand,
\[
(TB_1f)(x)=  \intl_{\bbr^{n-1}} \!\!  (B_1f)(y', x'\cdot y' +x_n)\, dy'=\intl_{\bbr^{n-1}}\! \!  f (y', x_n -|y'|^2+ x'\cdot y')\, dy',\]
as above. This gives the result.
\end{proof}

\begin{lemma}\label{swq} Let $v(x)\!=\!(1+ |x'|^2 +(x_n + |x'|^2)^2)^{-1/2}$. Then
  \be \label{eq2z}\intl_{\bbr^n}  (Pf)(x) u(x) dx
=\frac{\sig_{n-1}}{2^n}\, \intl_{\rn} f(x) v(x) dx, \ee
provided that either side of this  equality exists in the Lebesgue sense.
\end{lemma}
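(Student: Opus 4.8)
The plan is to derive this from formula (\ref{eq2}) of Lemma \ref{lscerd}, together with the factorization $Pf=B_2TB_1f$ established in Lemma \ref{swa}. The key observation is that the two weights are exactly the natural pullbacks of the weights occurring for $T$: writing $U(x)=(1+|x|^2)^{-n/2}$ and $V(x)=(1+|x|^2)^{-1/2}$, one has $u=B_2U$ and $v=B_1^{-1}V$, since
\[
(B_2U)(x)=\big(1+4|x'|^2+(x_n-|x'|^2)^2\big)^{-n/2}=u(x),\qquad (B_1^{-1}V)(x)=\big(1+|x'|^2+(x_n+|x'|^2)^2\big)^{-1/2}=v(x).
\]
It is precisely this that makes the relevant changes of variables collapse everything onto (\ref{eq2}).

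First I would substitute $Pf=B_2TB_1f$ and use that $B_2$ acts by composition with a fixed map, so that $(B_2g)\,(B_2U)=B_2(gU)$ pointwise. With $g=TB_1f$, the change of variables $z=(2x',\,x_n-|x'|^2)$, whose inverse is $x=(z'/2,\,z_n+|z'|^2/4)$ and whose Jacobian produces the factor $2^{-(n-1)}$ (cf.\ (\ref{iar})), gives
\[
\intl_{\bbr^n}(Pf)(x)\,u(x)\,dx=\intl_{\bbr^n}(B_2(gU))(x)\,dx=2^{-(n-1)}\intl_{\bbr^n}(TB_1f)(z)\,(1+|z|^2)^{-n/2}\,dz.
\]

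Next I would apply (\ref{eq2}) with $B_1f$ in the role of $f$, which produces the constant $\sig_{n-1}/2$ and leaves $\intl_{\bbr^n}(B_1f)(z)\,(1+|z|^2)^{-1/2}\,dz$. The final substitution $w=(z',\,z_n-|z'|^2)$, which has Jacobian one and sends $(B_1f)(z)=f(z',z_n-|z'|^2)$ to $f(w)$, turns $(1+|z|^2)^{-1/2}$ into $(B_1^{-1}V)(w)=v(w)$. Collecting the constants $2^{-(n-1)}\cdot(\sig_{n-1}/2)=\sig_{n-1}/2^n$ then yields the assertion.

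I do not expect a genuine obstacle. The only points requiring care are the two short algebraic verifications that $u=B_2U$ and $v=B_1^{-1}V$ (the quadratic terms cancel exactly under the parabolic shift $x_n\mapsto x_n\pm|x'|^2$), and the justification that the substitutions are valid under the sole hypothesis that one side exists in the Lebesgue sense. For $f\ge 0$ every line above is an equality in $[0,\infty]$ by Tonelli's theorem, so finiteness of one side forces finiteness of the other; the signed case then follows by the usual decomposition into positive and negative parts once absolute integrability is established.
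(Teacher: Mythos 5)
Your proposal is correct and follows exactly the paper's route: the paper's own proof is the single sentence that (\ref{eq2z}) follows from (\ref{eq2}) and (\ref{bts}), and your argument is precisely that deduction with the details (the pullback identities $u=B_2U$, $v=B_1^{-1}V$, the Jacobian factor $2^{-(n-1)}$, and the Tonelli/decomposition justification) written out. In fact your identification $u(x)=\bigl(1+4|x'|^2+(x_n-|x'|^2)^2\bigr)^{-n/2}$ supplies the definition of $u$ that the lemma's statement leaves implicit, and your constant bookkeeping $2^{-(n-1)}\cdot\sig_{n-1}/2=\sig_{n-1}/2^n$ matches the asserted formula.
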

\begin{proof}
 The formula (\ref{eq2z}) follows from (\ref{eq2}) and (\ref{bts}).
\end{proof}

\begin{lemma} \label{lseedP} If $1 \le p < n/(n-1)$, then $(Pf)(x)$ is finite for almost all  $x\in \rn$ and the absolute value of the
left-hand side of (\ref{eq2z}) does not exceed $ c\, ||f||_p$, $\, c=\const$.
\end{lemma}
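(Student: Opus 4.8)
The plan is to imitate the proof of Theorem~\ref{lsu}, reducing the whole statement to H\"older's inequality applied to the right-hand side of the duality identity (\ref{eq2z}). First I would replace $f$ by $|f|$ throughout. Since $P|f|\ge 0$ and $|(Pf)(x)|\le (P|f|)(x)$ pointwise, it suffices to control the strictly positive weighted integral $\intl_{\rn}(P|f|)(x)\,u(x)\,dx$, where $u$ is the weight on the left-hand side of (\ref{eq2z}); note that $u(x)>0$ for every $x$, since via the factorization $Pf=B_2TB_1f$ of Lemma~\ref{swa} it is a negative power of a quantity that is $\ge 1$. By Lemma~\ref{swq}, as soon as the right-hand side exists in the Lebesgue sense we have
\[
\intl_{\rn}(P|f|)(x)\,u(x)\,dx=\frac{\sig_{n-1}}{2^n}\intl_{\rn}|f(x)|\,v(x)\,dx,\qquad v(x)=(1+|x'|^2+(x_n+|x'|^2)^2)^{-1/2}.
\]

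The crux of the matter is to verify that $v\in L^{p'}(\rn)$ exactly in the stated range, that is, $p'>n$ (with $p'=\infty$ when $p=1$). For $p=1$ this is trivial because $0<v(x)\le 1$, so $\|v\|_\infty\le 1$. For $1<p<n/(n-1)$ I would compute $\|v\|_{p'}^{p'}$ by integrating first in $x_n$ and then in $x'$. Fixing $x'$ and substituting $s=x_n+|x'|^2$ gives
\[
\intl_{-\infty}^\infty (1+|x'|^2+s^2)^{-p'/2}\,ds=c(p')\,(1+|x'|^2)^{(1-p')/2},\qquad c(p')=\intl_{-\infty}^\infty(1+u^2)^{-p'/2}\,du<\infty,
\]
the constant being finite precisely because $p'>1$. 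Integrating the remaining factor over $\bbr^{n-1}$ in polar coordinates shows that $\intl_{\bbr^{n-1}}(1+|x'|^2)^{(1-p')/2}\,dx'$ converges if and only if $p'-1>n-1$, i.e. $p'>n$, which is exactly $p<n/(n-1)$. This single computation pins down the stated bound on $p$ and is the only delicate point.

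Combining the two steps, H\"older's inequality gives $\intl_{\rn}|f|\,v\,dx\le \|f\|_p\,\|v\|_{p'}<\infty$, so the right-hand side of (\ref{eq2z}) exists in the Lebesgue sense and Lemma~\ref{swq} applies to $|f|$, yielding
\[
\intl_{\rn}(P|f|)(x)\,u(x)\,dx\le \frac{\sig_{n-1}}{2^n}\,\|v\|_{p'}\,\|f\|_p=c\,\|f\|_p.
\]
Since $u(x)>0$ everywhere, finiteness of this integral forces $(P|f|)(x)<\infty$, hence $(Pf)(x)<\infty$, for almost all $x\in\rn$; and the same estimate bounds the absolute value of the left-hand side of (\ref{eq2z}) because $\bigl|\intl_{\rn}(Pf)\,u\bigr|\le \intl_{\rn}(P|f|)\,u\le c\,\|f\|_p$. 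Everything besides the $L^{p'}$-integrability of $v$ is a routine transcription of the corresponding argument for $T$.
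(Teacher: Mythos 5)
Your proposal is correct and is exactly the paper's argument: the paper proves Lemma \ref{lseedP} in one line, "This statement follows from (\ref{eq2z}) by H\"older's inequality," and your write-up simply supplies the details that line leaves implicit (passing to $|f|$, the computation showing $v\in L^{p'}(\rn)$ precisely when $p'>n$, i.e. $p<n/(n-1)$, and the positivity of the weight $u$). Nothing in your proof deviates from the intended route, and the key integrability check of $v$ is carried out correctly.
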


This statement follows from (\ref{eq2z})  by H\"older's inequality.

\begin{remark} The bound $p < n/(n-1)$ in Lemma \ref{lseedP} is sharp. Indeed, suppose $p \ge n/(n-1)$ and let $f_0 (\in
L^p(\bbr^n))$ be a function for which $ T f_0\equiv \infty$; see Remark \ref{kaz}. The function
 $f_{*}= B_1^{-1} f_0$  belongs to $L^p(\bbr^n)$ by (\ref{iar}).
By (\ref{bts}),  it follows that $Pf_{*}= B_2TB_1 f_{*}\equiv \infty$.
\end{remark}

\subsection {Fractional integrals $P_{+}^{\,\a}f$. Elementary properties}

The following lemma is a generalization of (\ref{bts}).

\begin{lemma}\label {swaw} Let $B_1$ and $B_2$ be the mappings (\ref{bars}). If  $Re \,\a >0$, then
\be \label {btswh}  P_{+}^{\,\a} f\!=\!B_2 T_{+}^\a B_1 f, \ee
 provided that either side  of this equality exists in the Lebesgue sense.
\end{lemma}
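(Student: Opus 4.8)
The plan is to mimic the proof of Lemma~\ref{swa}, the only genuinely new feature being the transport of the Riemann--Liouville weight $(\cdot)_+^{\a-1}$ through the parabolic substitutions $B_1$ and $B_2$ of (\ref{bars}). Since $B_2$ is invertible with inverse given in (\ref{bars1}), the asserted identity (\ref{btswh}) is equivalent to $B_2^{-1}P_{+}^{\,\a}f = T_{+}^\a B_1 f$, and I would establish the latter pointwise. Everything is a direct computation; the analogue at $\a=0$ is exactly Lemma~\ref{swa}.

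First I would expand the left-hand side. Writing out $(B_2^{-1}P_{+}^{\,\a}f)(x)=(P_{+}^{\,\a}f)(x'/2,\,x_n+|x'|^2/4)$ from the convolution (\ref{poaxzc}) and substituting $y'\mapsto x'/2-y'$ in the horizontal variable together with $y_n\mapsto x_n+|x'|^2/4-y_n$ in the vertical one, the quadratic terms $|x'|^2/4$ cancel in both the weight and the argument of $f$, and I expect to obtain
\[
(B_2^{-1}P_{+}^{\,\a}f)(x)=\frac{1}{\Gam(\a)}\intl_{\rn}(x_n-y_n+x'\cdot y'-|y'|^2)_+^{\a-1}\,f(y',y_n)\,dy.
\]
Next I would compute the right-hand side. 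Using $(B_1f)(y',\,y_n+x'\cdot y')=f(y',\,y_n+x'\cdot y'-|y'|^2)$ in (\ref{99jh1}) and then changing variables $y_n\mapsto y_n+x'\cdot y'-|y'|^2$ with $y'$ held fixed, the factor $(x_n-y_n)_+^{\a-1}$ turns into $(x_n-y_n+x'\cdot y'-|y'|^2)_+^{\a-1}$, producing precisely the same integral as above. Comparing the two expressions gives $B_2^{-1}P_{+}^{\,\a}f=T_{+}^\a B_1 f$, and applying $B_2$ yields (\ref{btswh}).

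Since the algebra is elementary, the only point requiring care --- and the place where I would be most careful --- is the legitimacy of Fubini's theorem and of the changes of variable. For $Re\,\a>0$ one has $|(\cdot)_+^{\a-1}|=(\cdot)_+^{Re\,\a-1}$ with exponent exceeding $-1$, so the kernel is locally integrable up to the boundary of its support; under the standing hypothesis that one side of (\ref{btswh}) exists in the Lebesgue sense, the integrand is absolutely integrable and each of the affine substitutions (all of unit Jacobian modulus) is justified. I therefore expect no real obstacle beyond this bookkeeping, the crux being the cancellation of the paraboloid shift $|x'|^2$ introduced by $B_1$ against the shift $|x'|^2/4$ introduced by $B_2^{-1}$.
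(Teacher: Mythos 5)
Your proposal is correct and follows essentially the same route as the paper: both reduce (\ref{btswh}) to the identity $B_2^{-1}P_{+}^{\,\a}f = T_{+}^\a B_1 f$ and verify it by direct changes of variables, with the paraboloid shifts cancelling exactly as you describe. The only cosmetic difference is that the paper normalizes both sides to the iterated form $\frac{1}{\Gam(\a)}\int_0^\infty s^{\a-1}\,ds\int_{\bbr^{n-1}} f(z', x_n - s + x'\cdot z' - |z'|^2)\,dz'$ via the substitution $s = y_n - |y'|^2$, whereas you compare the single kernel $(x_n - y_n + x'\cdot y' - |y'|^2)_{+}^{\a-1}$ on both sides, which amounts to the same computation.
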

\begin{proof}  Owing to (\ref{bars1}),
\bea
&&(B_2^{-1}P_{+}^{\,\a} f)(x)\!= \!\frac{1}{\Gam (\a)} \intl_{\bbr^n}(y_n \!-\!|y'|^2)_{+} ^{\a -1} \,f \left( \frac{x'}{2}\!- \!y', x_n \!+\! \frac{|x'|^2}{4} \! -\! y_n \right) dy,\nonumber\\
&& =\frac{1}{\Gam (\a)}  \intl_{\bbr^{n-1}} dy' \intl_0^\infty s^{\a -1} f \left( \frac{x'}{2}- y', x_n + \frac{|x'|^2}{4}  - s- |y'|^2\right ) ds\nonumber\\
\qquad \label {mmnzh} && =\frac{1}{\Gam (\a)}  \intl_0^\infty s^{\a -1} ds  \intl_{\bbr^{n-1}}  f (z', x_n -s + x'\cdot z'- |z'|^2)\, dz'.\eea
On the other hand,
\bea
&&(T_{+}^\a B_1f)(x)=\frac{1}{\Gam (\a)}\intl_{\rn} (x_n -y_n)_{+}^{\a -1} (B_1f)(y', y_n + x' \cdot y')\, dy\nonumber\\
&&= \frac{1}{\Gam (\a)}  \intl_0^\infty s^{\a -1} ds \intl_{\bbr^{n-1}}  f (y',  x_n -s + x'\cdot y'- |y'|^2)\, dy',\nonumber\eea
which coincides with (\ref{mmnzh}).
\end{proof}

Theorem \ref {o9iu2} gives the following boundedness result for  $P_{+}^\a f$.

 \begin{lemma}\label {o9iu3}  Let  $\,0<\a <1$,
\be\label{op98ypa} 1\le p < \frac{n}{n\!-\!1\!+\!\a}, \quad \nu= -\a-\frac {n\!-\!1}{p'}, \quad \mu = -n \left (1\!-\!\frac{2}{p}\right),\ee
$1/p +1/p' =1$. We define
\[ L^p_v(\bbr^n)\!=\!\{ f: ||f||_{p,v}\equiv || v(x) f||_{L^p(\bbr^n)}<\infty \}, \quad v(x) \!=\! \frac {|x_n \!-\! |x'|^2|^\nu}{(1\!+\! 4|x'|^2)^{\mu/2}}.\]
Then
\be\label{op982y}
||P_{+}^\a f||_{p,v}\le  c \,||f||_{p},\qquad c=\const.\ee
\end{lemma}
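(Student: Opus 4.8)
The plan is to deduce this estimate from the boundedness of $T_+^\a$ in Theorem \ref{o9iu2} by exploiting the factorization $P_+^\a f = B_2 T_+^\a B_1 f$ established in Lemma \ref{swaw}, which is available here because $0<\a<1$ guarantees $Re\,\a>0$. The guiding observation is that the parabolic weight $v$ has been chosen precisely so that its pullback under $B_2$ is the affine weight $u$ attached to $T_+^\a$ in Theorem \ref{o9iu2}; thus the whole argument reduces to one change of variables together with the isometry property $\|B_1 f\|_p=\|f\|_p$ recorded in (\ref{iar}).

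Concretely, I would set $g=B_1 f$ and $F=T_+^\a g$, so that $P_+^\a f=B_2 F$ and hence $(P_+^\a f)(x)=F(2x',x_n-|x'|^2)$. Writing out the weighted norm gives
\[
\|P_+^\a f\|_{p,v}^p=\intl_{\rn}|v(x)|^p\,|F(2x',x_n-|x'|^2)|^p\,dx.
\]
I then change variables $z'=2x'$, $z_n=x_n-|x'|^2$, so that $x'=z'/2$, $x_n=z_n+|z'|^2/4$, with $dx=2^{-(n-1)}dz$ (the Jacobian matrix is block triangular with determinant $2^{n-1}$). The crucial computation is that under this substitution the weight collapses exactly onto $u$: since
\[
x_n-|x'|^2=z_n,\qquad 1+4|x'|^2=1+|z'|^2,
\]
we obtain $v(z'/2,\,z_n+|z'|^2/4)=|z_n|^\nu(1+|z'|^2)^{-\mu/2}=u(z)$.

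Consequently
\[
\|P_+^\a f\|_{p,v}^p=2^{-(n-1)}\intl_{\rn}|u(z)|^p\,|F(z)|^p\,dz=2^{-(n-1)}\,\|T_+^\a g\|_{p,u}^p.
\]
At this point the parameters $\mu$, $\nu$ and the range $1\le p<n/(n-1+\a)$ in (\ref{op98ypa}) coincide verbatim with the hypotheses of Theorem \ref{o9iu2}, so that theorem applies to $g$ and yields $\|T_+^\a g\|_{p,u}\le c\,\|g\|_p$. Combining this with $\|g\|_p=\|B_1 f\|_p=\|f\|_p$ from (\ref{iar}) gives $\|P_+^\a f\|_{p,v}\le 2^{-(n-1)/p}c\,\|f\|_p$, which is (\ref{op982y}). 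I do not expect any genuine obstacle: the only point requiring care is verifying that the parabolic weight $v$ transforms into the affine weight $u$ under $B_2$, and this is exactly the short determinant-and-substitution calculation above. Everything else is borrowed directly from Lemma \ref{swaw} and Theorem \ref{o9iu2}.
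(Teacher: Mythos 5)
Your proof is correct and follows essentially the same route as the paper: both arguments factor $P_{+}^{\,\a} f = B_2 T_{+}^\a B_1 f$ via Lemma \ref{swaw}, convert the parabolic weight $v$ into the affine weight $u$ by the substitution $z'=2x'$, $z_n=x_n-|x'|^2$, and then invoke Theorem \ref{o9iu2} together with $\|B_1 f\|_p=\|f\|_p$ from (\ref{iar}). The only (inessential) difference is that you run the change of variables starting from the $P$-side rather than the $T$-side and track the Jacobian factor $2^{-(n-1)}$ explicitly, which the paper's own computation omits; since the constant in (\ref{op982y}) is unspecified, this changes nothing.
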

 \begin{proof}  Let $u(x) = |x_n|^\nu (1+|x'|^2)^{-\mu/2}$, as in Theorem \ref {o9iu2}.
By (\ref{btswh}) and (\ref {bars1}),
\bea
||T_+^\a f||^p_{p,u}&=&||u(x) B_2^{-1}P_{+}^{\,\a}  B_1^{-1} f||_p^p\nonumber\\
&=&\intl_{\bbr^{n}}\frac{|x_n|^{\nu p}}{(1\!+\!|x'|^2)^{\mu p/2}} \left | (P_{+}^{\,\a}  B_1^{-1} f)\left (\frac{x'}{2},  x_n +\frac{|x'|^2}{4}\right )\right |^p dx. \nonumber\eea
Changing variables, we  write the last expression as
\[
\intl_{\bbr^{n}} \left| (P_{+}^{\,\a}  B_1^{-1} f)(y) \frac {|x_n \!-\! |x'|^2|^\nu}{(1\!+\! 4|x'|^2)^{\mu/2}}\right |^p dy = ||P_{+}^{\,\a}  B_1^{-1} f||^p_{p,v}.\]
Hence, setting $\vp =B_1^{-1} f$, we obtain
\[
 ||P_{+}^{\,\a} \vp||^p_{p,v}= ||T_+^\a B_1 \vp||^p_{p,u} \le 2^{-1/p} c_\a \,||B_1 \vp||_{p}= 2^{-1/p} c_\a \,||\vp||_{p}; \]
cf. (\ref{iar}). This gives (\ref{op982y}), up to notation.
\end{proof}

\begin{lemma} \label {side} Let  $f \!\in S (\rn)$.  The following statements hold.

\noindent {\rm (i)} For each $x \in \rn$, $(P_{+}^{\,\a} f)(x)$
 extends as an entire function of $\a$. Moreover,
\be\label {Dac} \lim\limits_{\a \to 0} (P_{+}^{\,\a} f)(x) =(Pf)(x),\ee
where $(P f)(x)$ is the parabolic Radon transform (\ref{pparP}).

\noindent {\rm (ii)}  If $Re \,\a  > 0$, then  for any multi-index $m$,
\be\label {Dacw} \partial^m P_{+}^{\,\a} f =P_{+}^{\,\a} \partial^m f.\ee

\noindent {\rm (iii)} For any positive integer $k$,
\be\label {Dacwr}
 P_{+}^{\,\a} f =  P_{+}^{\,\a +k} \partial_n^k  f=  \partial_n^k P_{+}^{\,\a +k} f. \ee
\end{lemma}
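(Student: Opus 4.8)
The plan is to reduce all three statements to the one--variable Riemann--Liouville calculus, exactly as was done for $T_{+}^{\,\a}$ in Lemma~\ref{sideT}. The first step is to record a ``slicing'' identity: for $Re\,\a>0$ and $f\in S(\rn)$,
\[ (P_{+}^{\,\a} f)(x', x_n)=\big(I^\a_+\,[(Pf)(x',\cdot)]\big)(x_n). \]
This follows from \eqref{poaxzc} by performing the inner integration with the substitution $s=y_n-|y'|^2$ and then interchanging the $s$-- and $y'$--integrations; the interchange is legitimate by Fubini's theorem because for $Re\,\a>0$ the factor $s^{\a-1}$ is integrable near $s=0$ and $f$ is rapidly decreasing. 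One could instead start from \eqref{btswh} and apply Lemma~\ref{sideT}, but since $B_1f\notin S(\rn)$ this route would require exactly the same regularity input established below.

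The main obstacle, and the only genuinely new estimate, is to show that for each fixed $x'\in\bbr^{n-1}$ the profile $\tau\mapsto(Pf)(x',\tau)$ belongs to $S(\bbr)$; this is the analogue of Lemma~\ref{from}. Writing
\[ (Pf)(x',\tau)=\intl_{\bbr^{n-1}} f(x'-y',\,\tau-|y'|^2)\,dy' \]
and using the Schwartz bound $|f(x'-y',w)|\le c_{N,M}(1+|x'-y'|)^{-N}(1+|w|)^{-M}$, the decay as $\tau\to-\infty$ is immediate, since then $|\tau-|y'|^2|\ge|\tau|$ and the $y'$--integral is $O(|\tau|^{(n-1)/2-M})$, faster than any power for $M$ large. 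The delicate direction is $\tau\to+\infty$, where the paraboloid sweeps through the support of $f$ along the ridge $|y'|^2\approx\tau$; here one splits the integral into $\{|y'|\le\sqrt{\tau}/2\}$, on which $\tau-|y'|^2\ge 3\tau/4$, and $\{|y'|>\sqrt{\tau}/2\}$, on which $|x'-y'|\ge\sqrt{\tau}/4$ once $\tau$ is large (with $x'$ fixed). Choosing $N>n-1$ makes each piece decay faster than any power of $\tau$. Differentiating \eqref{pparP} under the integral sign (permissible since $f\in S(\rn)$) replaces $f$ by its derivatives and leaves the estimate of the same type, so the profile together with all its $\tau$--derivatives is rapidly decreasing; hence $(Pf)(x',\cdot)\in S(\bbr)$.

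Once the profile is known to lie in $S(\bbr)$, the three assertions follow from the one--dimensional theory. For \eqref{Dac}, for each fixed $x$ the right side of the slice formula is $\big(I^\a_+[(Pf)(x',\cdot)]\big)(x_n)$, which extends to an entire function of $\a$ because $I^\a_+\om$ does so for $\om\in S(\bbr)$ (see the discussion around \eqref{oqa}--\eqref{oqa2}); this entire function is the asserted extension of $(P_{+}^{\,\a}f)(x)$, and its value at $\a=0$ is $(Pf)(x)$. Statement \eqref{Dacw} is the simplest and does not even need the profile: the kernel $(y_n-|y'|^2)_+^{\a-1}$ in \eqref{poaxzc} is independent of $x$, so for $Re\,\a>0$ one differentiates directly under the integral, moving $\partial^m$ onto $f(x-y)$, the differentiation being justified by dominated convergence with $\partial^m f\in S(\rn)$.

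Finally, \eqref{Dacwr} comes from the slice formula together with \eqref{oqa1} and \eqref{oqa2}. Differentiating \eqref{pparP} in $x_n$ under the integral gives $(P\,\partial_n^k f)(x',\cdot)=\partial_\tau^{\,k}\big[(Pf)(x',\cdot)\big]$, i.e. the profile of $\partial_n^k f$ is the $k$--th derivative of the profile $\om:=(Pf)(x',\cdot)\in S(\bbr)$. Hence, for $Re\,\a>0$,
\[ P_{+}^{\,\a+k}\,\partial_n^k f=\big(I^{\a+k}_+\om^{(k)}\big)(x_n)=\big(I^\a_+\om\big)(x_n)=P_{+}^{\,\a}f \]
by \eqref{oqa2}, and $\partial_n^k P_{+}^{\,\a+k}f=(d/dx_n)^k\big(I^{\a+k}_+\om\big)(x_n)=\big(I^\a_+\om\big)(x_n)=P_{+}^{\,\a}f$ by \eqref{oqa1}. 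Since all members of these equalities are entire in $\a$ (by (i)), the identities persist for every $\a\in\bbc$ by analytic continuation, which is the statement \eqref{Dacwr}.
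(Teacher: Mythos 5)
Your proof is correct and follows essentially the same route as the paper's: your slicing identity is exactly the paper's formula (\ref{008}) (note that the paper's $A_x(s)$ equals $(Pf)(x',x_n-s)$), and statements (i)--(iii) are then read off from the one-variable Riemann--Liouville theory, with (ii) by direct differentiation under the integral and (iii) by integration by parts and the identities (\ref{oqa1})--(\ref{oqa2}), just as in the paper. The only divergence is that you prove more than is needed: since $s$ ranges over $(0,\infty)$, the paper only requires rapid decay of $(Pf)(x',x_n-s)$ as $s\to+\infty$, which is the easy direction (the last argument of $f$ tends to $-\infty$), so your ridge estimate for $\tau\to+\infty$ establishing full membership of the profile in $S(\bbr)$ --- a correct analogue of Lemma \ref{from} for $P$ --- is a genuine strengthening but not required by the lemma.
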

\begin{proof} To prove {\rm (i)}, we have
 \bea
(P_{+}^{\,\a} f)(x)\!\!&=&\!\!\frac{1}{\Gam (\a)}\intl_{\bbr^{n-1}} dy'\intl_{-\infty}^\infty  (y_n -|y'|^2)_{+}^{\a -1}  f (x' -y', x_n -y_n)\, dy_n \quad \nonumber\\
\label {008}&=&\!\!\frac{1}{\Gam (\a)}\intl_0^\infty \!s ^{\a -1} A_{x} (s)\,ds; \\
A_{x} (s)\!\!&=&\!\!\intl_{\bbr^{n-1}} \!\!\! f (x' -y', x_n  - |y'|^2 - s)\, dy'.\nonumber\eea
Because the function $A_{x} (s)$ is smooth, rapidly decreasing, and satisfies $A_{x} (0)=(Pf)(x)$, the result follows; cf. \cite [Chapter I, Section 3.2]{GS1}, \cite [Section 2.5]{Ru15}.
 In  {\rm (ii)} we simply differentiate under the sign of integration.  The first equality in (\ref {Dacwr})  can be obtained using integration by parts. The second equality is the result of differentiation:
 \[
  \partial_n^k P_{+}^{\,\a +k} f= \partial_n^k I_{+}^{k}P_{+}^{\,\a} f=P_{+}^{\,\a} f.\]
\end{proof}

\subsection{The main theorem for $P_{+}^{\,\a} f$  }

\begin {theorem} \label {lanseTp} Suppose $1\le p,q \le \infty$, $\a_0=Re\, \a$.

\vskip 0.2 truecm

\noindent {\rm (i)}
The operator $P_{+}^{\,\a}$   initially defined on functions $\vp \in S (\rn)$ by analytic continuation, extends as a linear bounded operator  $\P_{+}^{\,\a}:  L^p (\rn)\to L^q (\rn)$  if and only if
\[
\frac{1-n}{2} \le \a_0 \le 1, \qquad p=\frac{n+1}{n + \a_0}, \qquad q=\frac{n+1}{1-\a_0}.\]
 In particular, the parabolic Radon transform $P$ extends  as a linear bounded operator  $\P: L^p (\rn)\to L^q (\rn)$ if and only if
$p=(n+1)/n$ and  $ q=n+1$.

\vskip 0.2 truecm

\noindent {\rm (ii)} In the cases $0<Re\, \a <1$ and  $\a=0$, the $L^q$-functions $(\P_{+}^{\,\a} f)(x)$ and $(\P f)(x)$
coincide a.e.  with   absolutely convergent integrals  $(P_{+}^{\,\a} f)(x)$ and $(P f)(x)$, respectively.
\end{theorem}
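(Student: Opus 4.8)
The plan is to derive everything from the factorization $P_{+}^{\,\a} = B_2\,T_{+}^{\,\a}\,B_1$ established in Lemma~\ref{swaw}, transferring the already-proved results for the transversal operator $T_{+}^{\,\a}$ (Theorems~\ref{lanseT} and~\ref{o9iu4}) through the auxiliary maps $B_1$ and $B_2$. The first step is to promote the pointwise identity (\ref{btswh}) to an identity of bounded operators on $L^p(\rn)$. On Schwartz functions both sides agree for $Re\,\a>0$ and hence, by analytic continuation, for all $\a$; so it remains only to note how $B_1$ and $B_2$ act on the relevant spaces. By (\ref{iar}), $B_1$ is an isometry of $L^p(\rn)$ and $B_2$ is a bounded bijection of $L^q(\rn)$ with $\|B_2F\|_q = 2^{(1-n)/q}\|F\|_q$, while the inverse maps (\ref{bars1}) satisfy $\|B_1^{-1}u\|_p = \|u\|_p$ and $\|B_2^{-1}v\|_q = 2^{(n-1)/q}\|v\|_q$. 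Thus $B_2\,\T_{+}^{\,\a}\,B_1$ is a bounded extension of $P_{+}^{\,\a}$ from $S(\rn)$, and by uniqueness of such extensions $\P_{+}^{\,\a} = B_2\,\T_{+}^{\,\a}\,B_1$.

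With this identity in hand, the ``if'' part of (i) is immediate: under the stated conditions on $(\a_0,p,q)$, Theorem~\ref{lanseT} makes $\T_{+}^{\,\a}\colon L^p\to L^q$ bounded, and composing with the bounded maps $B_1$ and $B_2$ yields boundedness of $\P_{+}^{\,\a}$. For the ``only if'' part I would run the factorization in reverse, writing $\T_{+}^{\,\a} = B_2^{-1}\,\P_{+}^{\,\a}\,B_1^{-1}$: boundedness of $\P_{+}^{\,\a}\colon L^p\to L^q$ then forces boundedness of $\T_{+}^{\,\a}\colon L^p\to L^q$, since the inverse maps again change norms only by fixed constants, and the conditions follow from the ``only if'' part of Theorem~\ref{lanseT}. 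The ``in particular'' clause is the special case $\a_0=0$, which gives exactly $p=(n+1)/n$ and $q=n+1$.

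For (ii) I may assume $\a$ real with $0<\a<1$. The crucial numerical point, already used in Theorem~\ref{o9iu4}, is that $p=(n+1)/(n+\a)<n/(n-1+\a)$, so by Theorem~\ref{o9iu2} the integral $T_{+}^{\,\a}(B_1 f)$ converges absolutely for $B_1 f\in L^p$ and, by Theorem~\ref{o9iu4}, agrees a.e.\ with the bounded extension $\T_{+}^{\,\a}(B_1 f)$. Applying $B_2$ and invoking Lemma~\ref{swaw} once more gives $\P_{+}^{\,\a} f = B_2\,\T_{+}^{\,\a}\,B_1 f = B_2\,T_{+}^{\,\a}\,B_1 f = P_{+}^{\,\a} f$ a.e., the middle integral being absolutely convergent (as also recorded in Lemma~\ref{o9iu3} via the weighted bound). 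The case $\a=0$ is handled identically, replacing Lemma~\ref{swaw} by Lemma~\ref{swa} and using the $\a=0$ instance of Theorem~\ref{o9iu4} together with Lemma~\ref{lseedP}.

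The genuinely routine verifications are the norm identities for $B_1$, $B_2$, and their inverses. The only step demanding real care is confirming that $B_1$ and $B_2$ are honest bounded \emph{bijections} between the relevant $L^p$-spaces, since it is exactly this invertibility that lets the factorization transfer both the sufficiency and the necessity of the conditions with no loss. As $B_1$ is a measure-preserving shift of the $x_n$-variable and $B_2$ combines such a shift with an anisotropic dilation, I expect no obstruction beyond careful bookkeeping of the powers of $2$.
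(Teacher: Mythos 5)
Your proposal is correct and follows essentially the same route as the paper: the factorization $P_{+}^{\,\a}=B_2\,T_{+}^{\,\a}\,B_1$ of Lemma~\ref{swaw}, extended to all $\a$ by analytic continuation, combined with the norm identities (\ref{iar}) and Theorem~\ref{lanseT}. The only difference is one of explicitness --- the paper writes out just the ``if'' chain of norm inequalities and states that (ii) mimics Theorem~\ref{o9iu4}, whereas you make the ``only if'' direction precise via the reverse factorization $\T_{+}^{\,\a}=B_2^{-1}\,\P_{+}^{\,\a}\,B_1^{-1}$ and obtain (ii) by transporting Theorem~\ref{o9iu4} through $B_1,B_2$; both are exactly the completions the paper intends.
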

\begin{proof}  Let $f\in S(\rn)$. If  $Re \,\a >0$, then by (\ref{btswh}),  $P_{+}^{\,\a} f\!=\!B_2 T_{+}^\a B_1 f$.  By  Lemmas \ref{sideT} and
\ref{side}, this equality extends analytically to all $\a \in \bbc$, and the  analytic
 continuation represents a smooth function on  $\rn$. Here we take into account  that $B_1$ and $B_2$ are diffeomorphisms of $ S(\rn)$.  We keep the notation $P_{+}^{\,\a}$ and  $T_{+}^{\,\a}$ for the corresponding analytic continuations. Then, by  (\ref{iar}) and Theorem \ref{lanseT},
\bea
|| P_+^\a f||_{q} &=& || B_2 T_{+}^\a B_1 f||_{q}=  2^{(1-n)/q} \,\|T_{+}^\a B_1 f \|_{q}    \nonumber\\
&\le& c\,||B_1 f||_p =c\, ||f||_p,\nonumber\eea
as desired. The proof of {\rm (ii)}  mimics the reasoning from Theorem \ref{o9iu4}.
\end{proof}

\section{Explicit formulas for  operators defined by interpolation }

 We restrict our consideration to the operators $T_{+}^{\,\a}$. Similar reasoning is applicable to the operators $R_{+}^{\,\a}$, $P_{+}^{\,\a}$, and their modifications. The cases $Re \a >0$ and $\a=0$, when the extended operators  $\R_{+}^{\,\a}$ and  $\P_{+}^{\,\a}$ coincide a.e. with the corresponding absolutely convergent integrals were mentioned in Theorems  \ref{o9iu4}  and \ref{lanseTp}.
  The most intriguing are the remaining cases, when these integrals diverge.

  We invoke the regularization technique, which amounts to the concept of {\it Marchaud's fractional derivative} \cite{Marc}. The latter is well known in Fractional Calculus and has proved to be useful for regularization of divergent integrals with power singularity; see, e.g., \cite{Ru89a, Ru89, Ru96, Ru15, Sam, SKM}.
 For the sake of simplicity, we restrict to the case of real $\a \in [(1-n)/2, 0)$. The same idea can be applied to the case $Im\, \a \neq 0$, but  the formulas look a bit  more complicated; cf. \cite{Marc, Ru96}.

 Let us recall some known facts.
The  Marchaud fractional derivative $\bbd^\a_{+}\vp$ of order $\a> 0$ of a function $\vp : \bbr \to \bbc$ is defined by
\be \label{cv56jks0}
(\bbd^\a_{+}\vp)(x)=\frac{1}{\varkappa_\ell (\a)} \intl
_0^\infty (\Del^\ell_{t}\vp) (x)
\,\frac{dt}{t^{1+\a}}, \qquad x\in \bbr.\ee
Here $(\Del^\ell_{t}\vp) (x)$ is the finite difference
\be (\Del^\ell_{t}\vp) (x)=
\sum_{j=0}^\ell {\ell \choose j} (-1)^j \vp (x - jt),\qquad \ell >\a, \ee
and the normalizing constant $\varkappa_\ell (\a)$ has the form
\bea\label{gruks1}
\qquad \varkappa_\ell (\a)&\equiv& \intl_{0}^{\infty }\frac{\left( 1-e^{-v}\right)^{\ell}}{v^{\alpha
+1}}\,dv\\
&=&\left\{ \!
 \begin{array} {ll} \Gam (-\a) \displaystyle{\sum\limits_{j=1}^\ell {\ell \choose j} (-1)^j
j^\a}, & \mbox{ $ \a \neq 1,2, \ldots , \ell -1,$}\\
\displaystyle{ \frac{(-1)^{1+\a}}{\a !}  \sum\limits_{j=1}^\ell
{\ell \choose j} (-1)^j j^\a
\log j}, & \mbox{$\a= 1,2, \ldots , \ell -1. $}\\
  \end{array}
\right. \nonumber \eea

If $\a =m$ is a positive integer and $\vp$ is good enough, then $(\bbd^\a_{+}\vp)(x)=\vp^{(m)} (x)$ is the usual $m$th derivative of $\vp$.
One can show that
\be\label {mmnnz}
\bbd^\a_{+} I^\a_{+} f= \lim\limits_{\e \to 0}\bbd^\a_{+, \e}  I^\a_{+} f =f,\ee
where
\be\label{trhsi} (\bbd^\a_{+, \e}\vp)(x)=\frac{1}{\varkappa_\ell (\a)} \intl
_\e^\infty (\Del^\ell_{t}\vp) (x)
\,\frac{dt}{t^{1+\a}}, \qquad \e>0,\ee
is the  {\it truncated Marchaud fractional derivative}.
The proof of  (\ref {mmnnz}) relies on the representation of $\bbd^\a_{+, \e}  I^\a_{+} f$ as an approximate identity
\be\label{trhsika}(\bbd^\a_{+, \e}  I^\a_+ f)(x)=\intl_0^\infty \lam_{\ell,\a} (\eta)\, f(x - \e \eta)\, d\eta,\ee
where the averaging kernel  $\lam_{\ell,\a} (\eta)$ is defined by
\be\label{gruks11}
\lam_{\ell, \a} (\eta)=\frac{1}{\eta\, \Gam (1+\a) \,\varkappa_\ell (\a)}\,
\sum_{j=0}^\ell {\ell \choose j} (-1)^j (\eta-j)_{+}^{\a}, \ee
and has the following properties
\be\label{stulka} \intl_0^\infty \lam_{\ell, \a} (\eta) d\eta =1, \qquad
\lam_{\ell, \a} (\eta)=\left\{ \!
 \begin{array} {ll} O(\eta^{\a -1}) & \mbox{if $ \eta <1,$}\\
 O(\eta^{\a -\ell-1}) & \mbox{if $\eta >1; $}\\
  \end{array}
\right. \ee
see, e.g.,  \cite [p. 182]{Ru88} \cite [pp. 50, 51]{Ru15}.  If $f\in L^p (\bbr)$, $p \in [1,\infty)$, then the expression in (\ref{trhsika}) tends to $f$  as $\e \to 0$
 in the $L^p$-norm and in
the almost everywhere sense. If, moreover, $f\in C_0(\bbr)$, the limit exists in the sup-norm. The result is independent of the choice of the integer $\ell >\a$.

Let us proceed to regularization of  $\T^\a_+ f$.
 Below we  use the above results for Marchaud's fractional derivative with $\a$  replaced by $-\a$ and  obtain explicit formulas for  $\T^{\a}_{+} f$ in the case $(1-n)/2 \le \a <0$.
 Suppose  $f\in L^p (\rn)$,  $ p=(n+1)/(n + \a)$, and  set
\[\vp = \T f \, \qquad \psi = \T_{+}^{\,\a}f. \]
Recall that  by Theorem \ref{lanseT},
\[\vp \in L^r (\rn), \; r\!=\!(n\!+\!1)/n,\; \text{\rm and}\;  \psi\in L^q (\rn), \; q\!=\! (n\!+\!1)/(1\!-\!\a).\]
Moreover, by Theorem \ref{o9iu4},
$\vp = \T f$ can be written in the integral form
\[
\vp (x)=(T f)(x) =\intl_{\bbr^{n-1}}  f(y', x_n + x'\cdot y')\, dy'. \]

\begin{theorem} \label {iffe} Let $f\in L^p (\rn)$,
\[  p=(n+1)/(n + \a), \qquad q= (n+1)/(1-\a), \qquad (1-n)/2 \le \a <0.\]
 Then the function  $\T^{\a}_{+} f\in L^q (\rn)$,
 determined by Theorem \ref{lanseT}, can be represented by the difference hypersingular integral
\be\label {easo}
(\T^{\a}_{+} f)(x', x_n)\!=\!\frac{1}{\varkappa_\ell (-\a)} \!\intl_0^\infty \!\left [\sum_{j=0}^\ell {\ell \choose j} (-1)^j (T f)(x', x_n \!-\! jt)\right ]\frac{dt}{t^{1-\a}},\ee
in which $\varkappa_\ell (-\a)$ is defined by (\ref{gruks1}),  $\ell > -\a$, and  $\int_0^\infty (...)=\lim\limits_{\e \to 0}\int_\e^\infty (...)$. The limit exists
 in the  $L^q$-norm with respect to the $x_n$-variable  for almost all $x' \in \bbr^{n-1}$. It also exists for almost all $x\in \rn$.
\end{theorem}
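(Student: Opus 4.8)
The plan is to recognize the right-hand side of (\ref{easo}) as the Marchaud fractional derivative $\bbd^{\b}_{+}$ of order $\b=-\a>0$, acting on the transversal Radon transform $Tf$ in the $x_n$-variable, and to show that this Marchaud derivative inverts the Riemann--Liouville integral $I^\b_+$ linking $Tf$ with $\psi=\T^\a_+f$. Indeed, for $f\in S(\rn)$ and $Re\,\a>0$ one has $(T^\a_+f)(x',x_n)=(I^\a_+T_{x'}f)(x_n)$ by (\ref{ted2.8}), and by Definition \ref{exte} the value $(\T^\a_+f)(x',x_n)$ for $\a<0$ is the analytic continuation of this expression, which on the Schwartz class is $(\bbd^{-\a}_+ T_{x'}f)(x_n)$ as in (\ref{cv56jks0}). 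Comparing (\ref{cv56jks0}) (with order $-\a$) with (\ref{easo}) shows that (\ref{easo}) is precisely this identity, once convergence of the truncated integrals is controlled.

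First I would settle the smooth case. For $f\in S(\rn)$, Lemma \ref{from} gives $Tf(x',\cdot)\in S(\bbr)$ for each $x'$, so the one-variable fractional calculus applies verbatim. Setting $\psi=\T^\a_+f=T^\a_+f$, the group property (\ref{oqa}), extended to all complex orders as noted after (\ref{oqa2}), yields in the $x_n$-variable
\be\label{plan-comp}
(Tf)(x',\cdot)=I^\b_+\psi(x',\cdot),\qquad \b=-\a>0,
\ee
since $I^{-\a}_+I^\a_+=I^0_+=\mathrm{id}$. Applying the truncated Marchaud derivative $\bbd^\b_{+,\e}$ in $x_n$ and invoking the approximate-identity representation (\ref{trhsika}) with order $\b$ and $g=\psi$, I obtain
\be\label{plan-ai}
(\bbd^\b_{+,\e}Tf)(x',x_n)=\intl_0^\infty \lam_{\ell,\b}(\eta)\,\psi(x',x_n-\e\eta)\,d\eta,
\ee
whose left-hand side is exactly the $\e$-truncation of the integral in (\ref{easo}).

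Next I would extend (\ref{plan-ai}) from $S(\rn)$ to arbitrary $f\in L^p(\rn)$. Fix $\e>0$. Since $\a<0$, the weight $t^{\a-1}=t^{-(1+\b)}$ is integrable on $[\e,\infty)$, and with $\|\Del^\ell_t g\|_r\le 2^\ell\|g\|_r$ this shows, via Minkowski's integral inequality, that $g\mapsto \bbd^\b_{+,\e}g$ is bounded on $L^r$ in $x_n$; on the other side, $\psi\mapsto\intl_0^\infty\lam_{\ell,\b}(\eta)\,\psi(\cdot-\e\eta)\,d\eta$ is bounded on $L^q$ because $\lam_{\ell,\b}\in L^1(\bbr)$ by (\ref{stulka}). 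Choosing $f_k\in S(\rn)$ with $f_k\to f$ in $L^p$, Theorem \ref{lanseT} gives $\T^\a_+f_k\to\psi$ in $L^q$, while the integral representation of $\T f$ recalled before the theorem (cf. Theorem \ref{o9iu4}) gives $Tf_k\to Tf$ in $L^r$. Because the two sides of (\ref{plan-ai}) coincide for each $f_k$ yet converge in $L^r$ and $L^q$ respectively, a common subsequence converges a.e., so (\ref{plan-ai}) holds a.e. for $f$ as well, with $\psi=\T^\a_+f$.

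Finally I would let $\e\to0$ in (\ref{plan-ai}). After the substitution $u=\e\eta$ the right-hand side is a one-sided approximate identity $\phi_\e*\psi$ in $x_n$, with $\phi_\e(u)=\e^{-1}\lam_{\ell,\b}(u/\e)$; by (\ref{stulka}) the kernel $\lam_{\ell,\b}$ has total mass one and an integrable decreasing majorant, so the approximation-to-the-identity results quoted after (\ref{stulka}) apply. Since $\psi\in L^q(\rn)$ forces $\psi(x',\cdot)\in L^q(\bbr)$ for a.e.\ $x'$, the integral in (\ref{plan-ai}) converges to $\psi(x',\cdot)$ in $L^q(\bbr)$ for a.e.\ $x'$ and pointwise a.e.\ on $\rn$, which is exactly (\ref{easo}). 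I expect the main obstacle to be the penultimate step: identifying $Tf$ as a genuine $L^r$-function and reconciling the two a priori different target spaces $L^r$ and $L^q$ when passing from Schwartz functions to $L^p$-functions, together with the uniformity in $\e$ needed for the a.e.\ limit — the latter being secured by the maximal-function majorant built into (\ref{stulka}).
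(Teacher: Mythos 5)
Your proposal is correct, and it shares the paper's overall skeleton: both arguments reduce the theorem to the truncated identity
$(\bbd^{-\a}_{+,\e}Tf)(x',x_n)=\intl_0^\infty \lam_{\ell,-\a}(\eta)\,(\T^{\a}_{+}f)(x',x_n-\e\eta)\,d\eta$
(the paper's (\ref{grutg}), your display with the kernel $\lam_{\ell,\b}$), and then let $\e\to 0$ via the approximate-identity properties (\ref{stulka}). Where you genuinely diverge is in how this identity is proved. The paper proves it \emph{weakly}: it pairs both sides with $\om\in\Phi_0(\bbr)$, approximates $f$ by $f_k\in\Phi(\rn)$ (Proposition \ref{izsh}), moves the truncated Marchaud derivative onto the test function, uses that $I^{\a}_{\pm}$, $T$, $T^{\a}_{+}$ act cleanly on the Semyanistyi--Lizorkin spaces, and finally invokes Proposition \ref{iqah} to upgrade the $\Phi_0'$-identity to an a.e.\ equality. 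You instead prove the identity \emph{pointwise} on $S(\rn)$ by one-variable fractional calculus and (\ref{trhsika}), and then extend to $f\in L^p$ by density, using Minkowski-type boundedness of the two truncated operators on $L^r$ and $L^q$ and identification of the two limits along an a.e.-convergent subsequence --- an elementary substitute for Proposition \ref{iqah}. Your route buys transparency (the mechanism ``Marchaud inverts Riemann--Liouville'' is explicit) and avoids the $\Phi$/$\Phi_0$ duality machinery altogether; the paper's route buys safety at one delicate spot: by working in $\Phi_0(\bbr)$, where $I^{\a}_{\pm}$ are automorphisms, it never has to interpret the mixed-order composition $I^{-\a}_{+}\bigl(\mathrm{a.c.}\,I^{\a}_{+}\om\bigr)$ for merely Schwartz $\om$, whereas your step $(Tf)(x',\cdot)=I^{-\a}_{+}\psi(x',\cdot)$ uses exactly that. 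This is not a gap, but it should be justified by reduction to positive orders rather than by citing the extended (\ref{oqa}) at face value (since $\mathrm{a.c.}\,I^{\a}_{+}\om$ is no longer Schwartz): write $\mathrm{a.c.}\,I^{\a}_{+}\om=I^{\a+\ell}_{+}\om^{(\ell)}$ by (\ref{oqa2}) and compose honest integrals, $I^{-\a}_{+}I^{\a+\ell}_{+}\om^{(\ell)}=I^{\ell}_{+}\om^{(\ell)}=\om$. Finally, note that both your proof and the paper's lean equally on the preamble claims that $\T f=Tf\in L^r(\rn)$, $r=(n+1)/n$, and that $Tf_k\to Tf$ in $L^r$ when $f_k\to f$ in $L^p$; this reliance is inherited from the paper's setup and is not a defect specific to your argument.
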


\begin{example} Let $n=2$, when
\[p=3/(2 + \a), \qquad q= 3/(1-\a), \qquad -1/2 \le \a <0.\] Choosing $\ell=1$, we obtain
\[
(\T^{\a}_{+} f)(x', x_n)\!=\!\frac{1}{\Gam (\a)} \intl_0^\infty \!\frac{(T f)(x', x_n - t)-(T f)(x', x_n)}{t^{1-\a}} \, dt.\]
\end{example}

 \noindent{\it Proof of Theorem \ref{iffe}.}
 Recall that  $\vp = \T f =Tf \in L^r (\rn)$, $\psi = \T_{+}^{\,\a}f\in L^q (\rn)$.
 We consider $\vp$ and $\psi$ as single-variable functions
 $\vp_{x'} (x_n)\equiv\vp  (x', x_n)$ and   $\psi_{x'} (x_n)\equiv \psi (x', x_n)$.  Clearly, $\vp_{x'} (\cdot)\in L^r (\bbr)$ and $\psi_{x'} (\cdot)\in L^q (\bbr)$  for almost all $x'\in \bbr^{n-1}$.
Denote
\[
 (\Lam_\e \psi_{x'})(x_n)= \intl_0^\infty \lam_{\ell,-\a} (\eta) \psi_{x'} (x_n - \e \eta) d\eta, \qquad \e >0.\]
  Our nearest aim is to show that
\be\label{grutg}
(\bbd^{-\a}_{+, \e}\vp_{x'})(x_n)=(\Lam_\e \psi_{x'})(x_n), \ee
where $\bbd^{-\a}_{+, \e}$ stands for the truncated Marchaud fractional derivative (\ref{trhsi}) of order $-\a$ and $\lam_{\ell,-\a} (\eta)$ is the averaging kernel  (\ref{gruks11}) with $\a$ replaced by $-\a$.
 The next step will be passage to the limit in (\ref{grutg}) as $\e \to 0$.

Let $\Phi_0 (\bbr)$ be the Semyanistyi-Lizorkin space of test functions on the real line; see   Definition  \ref{Semyanistyi}. Because $\bbd^{-\a}_{+, \e}\vp_{x'} \in  L^r (\bbr)$ and
$\Lam_\e \psi_{x'} \in  L^q (\bbr)$ for almost all $x'\in \bbr^{n-1}$, then, by Proposition \ref{iqah}, it is enough to prove (\ref{grutg}) in a weak sense, i.e.,
\be\label{grutgz}
\langle \bbd^{-\a}_{+, \e}\vp_{x'}, \om \rangle=\langle\Lam_\e \psi_{x'}, \om \rangle, \qquad \om \in \Phi_0 (\bbr).\ee
 We have
\be\label {rchaud}
\langle\bbd^{-\a}_{+, \e}\vp_{x'}, \om \rangle= \langle\bbd^{-\a}_{+, \e} [T f (x', \cdot)], \om \rangle= \langle (T f) (x', \cdot), \tilde\om \rangle,\ee
where $\tilde \om= \bbd^{-\a}_{-, \e}\om \in \Phi_0 (\bbr)$.

By  Lemma \ref{izsh},  there is a sequence $\{f_k\} \subset \Phi (\rn)$, converging to $f$ in the $L^p$-norm.
Then, by Theorem \ref{lanseT} and the equality $\T f=Tf$, the sequence    $\{T f_k\}$ converges to $Tf$ in  $L^r (\rn)$, that is,
\[
||Tf - T f_k||_r^r= \!\!\intl_{\bbr^{n-1}}\!\!\! dx'\!\! \intl_{\bbr} \!\!|(Tf)(x', x_n)-  (Tf_k)(x', x_n)|^r dx_n \to 0, \quad k \to \infty.\]
It follows that there is a subsequence $\{k_j\}$, such that
\[\intl_{\bbr} \!\!|(Tf)(x', x_n)- (Tf_{k_j})(x', x_n)|^r dx_n \to 0, \quad \text {\rm as} \quad j\to \infty,\]
 for almost all $x' \in \bbr^{n-1}$. Hence, for almost all $x' \in \bbr^{n-1}$, by H\"older's inequality we obtain
\bea
&& |\langle (T f) (x', \cdot), \tilde\om \rangle- \langle (Tf_{k_j}) (x', \cdot), \tilde\om \rangle|\nonumber\\
 &&\le \Big (\intl_{\bbr} \!\!|(Tf)(x', x_n)- (Tf_{k_j}(x', x_n)|^r dx_n\Big )^{1/r} ||\tilde\om||_{r'}  \to 0, \quad j\to \infty,\nonumber\eea
and (\ref{rchaud}) can be continued:
\bea
\langle\bbd^{-\a}_{+, \e}\vp_{x'}, \om \rangle&=& \lim\limits_{j\to \infty} \langle(T f_{k_j}) (x', \cdot), \tilde\om \rangle\nonumber\\
&=& \lim\limits_{j\to \infty} \langle I^{-\a}_{+} I^{\a}_{+}[(T f_{k_j}) (x', \cdot)], \tilde\om \rangle\nonumber\\
&=& \lim\limits_{j\to \infty} \langle I^{\a}_{+}[(T f_{k_j}) (x', \cdot)],  I^{-\a}_{-} \bbd^{-\a}_{-, \e}\om \rangle\nonumber\\
&=& \lim\limits_{j\to \infty} \langle(T^{\a}_{+} f_{k_j}) (x', \cdot),  \bbd^{-\a}_{-, \e} I^{-\a}_{-} \om \rangle,\nonumber\eea
where
\[ (\bbd^{-\a}_{-, \e}\vp)(x)=\frac{1}{\varkappa_\ell (-\a)} \intl
_\e^\infty  \Big [\sum_{j=0}^\ell {\ell \choose j} (-1)^j \vp (x + jt)\Big ]
\,\frac{dt}{t^{1-\a}}.\]
Note that $\bbd^{-\a}_{-, \e}$ preserves the space $\Phi_0 (\bbr)$ and the operators  $I^{-\a}_{-}$ and $\bbd^{-\a}_{-, \e}$ commute in this space. Note also that
$I^{\a}_{+}T f_{k_j}=  T^{\a}_{+} f_{k_j}$ because $f_{k_j} \in  \Phi (\rn)$.

Furthermore, since   $\{f_{k_j}\}$,  as a subsequence of  $\{f_{k}\}$, converges to $f$ in the $L^p$-norm, by Theorem \ref{lanseT} it follows that $ T^{\a}_{+} f_{k_j} \to  \T_{+}^{\,\a}f$ in the $L^q$-norm. Hence the last limit can be written as $\langle(\T^{\a}_{+} f) (x', \cdot),  \bbd^{-\a}_{-, \e} I^{-\a}_{-} \om \rangle$ or
$\langle\psi_{x'},   \bbd^{-\a}_{-, \e} I^{-\a}_{-} \om \rangle$. The composition $\bbd^{-\a}_{-, \e} I^{-\a}_{-} \om$ can be transformed by the formula
\[(\bbd^{-\a}_{-, \e}  I^{-\a}_- f)(x)=\intl_0^\infty \lam_{\ell,-\a} (\eta)\, f(x + \e \eta)\, d\eta,\]
which is a modification of (\ref{trhsika}).
Hence, we continue:
\bea
\langle\bbd^{-\a}_{+, \e}\vp_{x'}, \om \rangle&=&\intl_{\bbr}\psi_{x'} (x_n), dx_n \intl_0^\infty \lam_{\ell,-\a} (\eta)\, \om_{x'}(x_n + \e \eta)\, d\eta \nonumber\\
&=&\intl_0^\infty \lam_{\ell,-\a} (\eta) \, d\eta \intl_{\bbr}\psi (x', x_n)\,\om_{x'}(x_n + \e \eta)\,dx_n \nonumber\\
&=&\intl_0^\infty \lam_{\ell,-\a} (\eta) \, d\eta \intl_{\bbr} \psi (x',  x_n - \e \eta) \,\om_{x'}(x_n)\,  dx_n \nonumber\\
&=&\intl_{\bbr} (\Lam_\e \psi_{x'})(x_n) \,  \om_{x'}(x_n) dx_n.\nonumber\eea
This gives (\ref{grutgz}), and therefore (\ref{grutg}).

To complete the proof, it remains to apply the standard machinary of approximation to the identity   to the right-hand side of (\ref{grutg}) in the $x_n$-variable, assuming $x'$ fixed.
Taking into account that  $\psi_{x'})(\cdot)\in L^q (\bbr)$ and using the properties (\ref{stulka}) of the averaging kernel $\lam_{\ell, -\a} (\eta)$, we conclude that for almost all $x'\in \bbr^{n-1}$,
$(\Lam_\e \psi_{x'}) (x_n)\to \psi_{x'} (x_n)$ in the norm of the space $L^q (\bbr)$ and for almost all $x_n\in \bbr$.
Because the Marchaud fractional derivative $(\bbd^{-\a}_{+}\vp_{x'})(x_n)=\lim\limits_{\e \to 0}  (\bbd^{-\a}_{+, \e}\vp_{x'})(x_n)$ coincides with the right-hand side of (\ref {easo}), we are done.
\hfill $\Box$

\section{Conclusion}

Many other Radon-type fractional integrals are known in  harmonic analysis and integral geometry.  Such integrals are
 associated with Radon transforms on constant curvature spaces, matrix spaces, and diverse homogeneous spaces of Lie groups;  see, e.g., \cite{OP, OPR, OR, Ru13, Ru13a, Str81}. The corresponding $L^p$-$L^q$ estimates  are of great interest.
 Perhaps some readers will  be inspired to develop interpolation tools that  would be applicable to  analytic families of this kind.

\end{document}